\numberwithin{equation}{section}
\theoremstyle{plain}
\newtheorem{theorem}{Theorem}[section]
\newtheorem{lemma}[theorem]{Lemma}
\newtheorem{proposition}[theorem]{Proposition}
\newtheorem{definition}[theorem]{Definition}
\newtheorem{remark}[theorem]{Remark}
\newtheorem{example}[theorem]{Example}
\newtheorem{assumption}[theorem]{Assumption}
\newcommand{\bD}{\mathbb{D}}
\newcommand{\bE}{\mathbb{E}}
\newcommand{\bF}{\mathbb{F}}
\newcommand{\bH}{\mathbb{H}}
\newcommand{\bN}{\mathbb{N}}
\newcommand{\bP}{\mathbb{P}}
\newcommand{\bR}{\mathbb{R}}
\newcommand{\bW}{\mathbb{W}}
\newcommand{\bZ}{\mathbb{Z}}
\newcommand{\cB}{\mathcal{B}}
\newcommand{\cC}{\mathcal{C}}
\newcommand{\cE}{\mathcal{E}}
\newcommand{\cF}{\mathcal{F}}
\newcommand{\cG}{\mathcal{G}}
\newcommand{\cH}{\mathcal{H}}
\newcommand{\cK}{\mathcal{K}}
\newcommand{\cP}{\mathcal{P}}
\newcommand{\cS}{\mathcal{S}}
\newcommand{\cV}{\mathcal{V}}
\newcommand{\cX}{\mathcal{X}}
\newcommand{\cZ}{\mathcal{Z}}
\newcommand{\fP}{\mathfrak{P}}
\newcommand{\scI}{\mathbf{I}}
\newcommand{\scJ}{\mathbf{J}}
\newcommand{\scK}{\mathbf{K}}
\newcommand{\rA}{\mathbf{A}}
\newcommand{\rQ}{\mathbf{Q}}
\newcommand{\rR}{\mathbf{R}}
\newcommand{\rS}{\mathbf{S}}
\newcommand{\rZ}{\mathbf{Z}}
\newcommand{\RKHS}{\cH}
\newcommand{\fWIC}{\widehat{\cH}}
\newcommand{\clq}[2]{\mathrm{cl}_{#2}(#1)}
\newcommand{\BSi}{\mathbf{i}}
\DeclareMathOperator{\spn}{span}
\DeclareMathOperator{\lin}{Lin}
\DeclareMathOperator{\Toeplitz}{Toeplitz}
\newcommand{\oSlash}{{\mbox{\o}}}
\newcommand{\1}{\mathbbm{1}}
\tikzstyle{vertex} = [fill, shape=circle,inner sep=2pt,]
\tikzstyle{edge} = [fill, line width = 0.5pt]
\newcommand{\gmu}{\nu} 
\newcommand{\glambda}{\theta}
\title{The fundamental martingale with applications to \\ Markov Random Fields}
\author{
	\normalsize Kevin Hu \\[8pt]
	\small kevin\_hu@brown.edu
	\and
	\normalsize Kavita Ramanan \\[8pt]
	\small kavita\_ramanan@brown.edu
	\and
	\normalsize William Salkeld \\[8pt]
	\small william\_salkeld@brown.edu
}
\begin{document}

	\maketitle

	\setcounter{tocdepth}{1}
	\tableofcontents
    
    \newpage
    \section{Introduction}

    The theory of random fields finds its origin in the combination of two deep schools of thought. Firstly, from the kinetic theory of matter it is well understood that large scale, long range dependencies arise from the interactions of individuals with only their neighbours. On the other hand, probabilistic reasoning would conclude that each of these local interactions should be probabilistic in nature. The upshot of these ideas is that many problems in the modern world can be modelled by the joint distribution for some large collection of random variables that exhibit correlation with one another derived from some underlying graph structure. The study of Markov random fields starts with the Ising model of ferromagnetism from statistical mechanics \cite{Baxter1989Exactly}, but there there is also interest in neurophysiology due to \emph{Hopfield networks} for modelling addressable memory \cite{Hopfield1982Neural}. 

    Our focus of interest is on large collections of stochastic differential equations
    \begin{equation*}
        dX_t^u = b_u\Big( t, X^u[t], X^{N_u}[t] \Big) dt + dZ_t^u, \quad u \in V
    \end{equation*}
    where $V$ represents an index set, $N_u \subseteq V$ and $(Z^u)$ is a collection of independent additive noises. Such stochastic processes arise in statistical physics \cites{Dereudre2003Interacting, Redig2010Short}, mathematical finance \cite{Nadtochiy2020Mean} and oscillator synchronisation \cite{Medveded2019Continuum}. 

    Previous works studying the Markov random field properties for such locally interacting diffusions start with \cite{Deuschel1987Infinite} and \cite{Cattiaux1996Une} (which uses a variation of the Malliavin integration by parts formula), continuing with \cite{Dereudre2005Propagation} (which uses a truncation argument and convergence of the associated Hamiltonian) and later \cite{Roelly2014Propagation} which considers stochastic differential equations with a confining potential and a separate locally interacting drift term. 

    More recently, \cite{lacker2020Locally} establishes a 2-Markov random field property for a much broader class of stochastic differential equations by allowing for many different drift terms $b_u$ using a Girsanov theorem. This work was motivated by \cite{lacker2020marginal} which showed how a Global 2-Markov Random Field property is necessary for the derivation of an autonomous description of the dynamics of some small neighbourhood of locally interacting SDEs as the marginal of some much larger collection. This autonomous description takes the form of a singular McKean-Vlasov equation and this result has powerful implications for the study of these infinite systems of locally interacting SDEs.  
    
    Our interest arose from studying the local equation in the context where the Brownian motion was replaced by a Gaussian noise. We found that a necessary property for the Gaussian noises is that the \emph{reproducing kernel Hilbert space} needs to be appropriately large so that one can use a Girsanov Theorem applied to the drift term to describe the changes of measure that are central to proving the Markov Random Field property. 

    A Gaussian noise that proved to be a very effective choice was \emph{fractional Brownian motion} since the reproducing kernel Hilbert space is already well documented in \cite{Decreusfond1999Stochastic} and its failure of the semi-martingale property made it a compelling choice. While reviewing the literature, we came across the concept of the \emph{fundamental martingale} and realised this was central to our methods: a Gaussian process is said to have a fundamental martingale if there is a Volterra kernel that transforms the Gaussian process to a martingale (under the filtration generated by the Gaussian process). In the case of fractional Brownian motion, the Volterra kernel is already known (see Equation \eqref{eq:fbm_Volterra} below) and we consider some Gaussian processes whole Volterra kernel has a similar \emph{Sonine pair}. 

    More specifically, we found that the property that fractional Brownian motion satisfies that we require in this work is that the collection of reproducing kernel Hilbert spaces $(\RKHS_t)_{t\in [0,T]}$ forms a \emph{Hilbert space filtration} that is isomorphic to the Hilbert space filtration of reproducing kernels for Brownian motion. We refer to this property as \emph{securely locally non-deterministic} (see Definition \ref{definition:SLND} below). 

    We also emphasise that secure local non-determinism and the existence of the fundamental martingale also leads to many other vital properties, including entropy estimates and a Mimicking Theorem with important implications for distribution dependent dynamics that we explore in future works.

    \subsection*{Basic definitions}

    Let $V$ be a finite or countably infinite set and let $E \subseteq \big\{ \{u, v\}: u, v \in V \big\}$. Then we say that $(V, E)$ is graph. For every $u \in V$, we denote the neighbourhood $N_u = \big\{v \in V: \{u, v\} \in E \big\}$. We say that a graph $(V, E)$ is locally finite if for every $u \in V$ the set $|N_u|< \infty$. We say that two graphs $G_1=(V_1, E_1)$ and $G_2=(V_2, E_2)$ are isomorphic if there exists a bijection $\phi: V_1 \to V_2$ such that $E_2 = \big\{ \{\phi[u], \phi[v]\}: \{u, v\} \in E_1 \big\}$ and we define the set of locally finite graph isomorphism classes by $\cG$. 

    For a subset $A \subseteq V$, we define the first and second order boundary sets
    \begin{align*}
        \partial A:=& \big\{ u \in V: \exists v \in A \mbox{ such that } \{u, v\} \in E \big\},
        \\
        \partial^2 A:=& \partial A \cup \partial \big( A \cup \partial A \big). 
    \end{align*}
    \begin{definition}
		A \emph{clique} in a graph $G = (V, E)$ is a complete subgraph of $G$, a subset $A \subseteq V$ such that $(u, v) \in E$ for every $u, v \in A$. Equivalently, a clique is a set $A \subset V$ of diameter at most 1. We define $\clq{G}{1}$ to be the set of all cliques of the graph $G$. 
		
		Similarly, we say that any subset $A \subset V$ with diameter at most 2 is a \emph{2-clique} of the graph $G$ and let $\clq{G}{2}$ denote the set of 2-cliques of $G$. 
	\end{definition}
    
    For a normed vector space $\big( \cX, \| \cdot\| \big)$ we denote by $\cB(\cX)$ the Borel sets of $\cX$. For a finite index set $V$ and a collection of normed vector spaces $\big( \cX^u, \| \cdot \|_u \big)_{u\in V}$, we denote $\cX^V = \oplus_{u\in V} \cX^u$ with norm $\| \cdot \|_V:\cX^V \to \bR$ by 
    \begin{equation*}
        \big\| (x^u)_{u\in V} \big\|_V = \sum_{u\in V} \| x_u \|_u. 
    \end{equation*}
    On the other hand, when $V$ is countably infinite index set and $(\cX^u, \| \cdot \|_u )_{u\in V}$ is a collection of normed vector spaces, we denote the locally convex topological vector space 
    \begin{equation*}
        \cX^V = \bigoplus_{u\in V} \cX^u 
        \quad \mbox{with collection of seminorms} \quad
        f_u\big( x^V \big) = \| x^u \|_u. 
    \end{equation*}
    For a locally convex topological vector space $\cX$, we denote by $\cX^*$ the collection of continuous linear functionals and $\cB'(\cX)$ to be cylindrical $\sigma$-algebra, the minimal $\sigma$-algebra with respect to which all continuous linear functionals are measurable. 

    For a measure $\mu \in \cP\big( \cX^V \big)$ and a subset $U \subseteq V$, we denote $\mu^U$ to be the pushforward of the measure $\mu$ under the canonical projection from $\cX^V$ onto $\cX^U$.     
    
    \begin{definition}
        \label{def-MRFs}
        Let $(\cX, d)$ be a metric space and let $G=(V, E)$ be a locally finite graph. Let $Y^V:=(Y^v)_{v \in V}$ be a collection of random variables indexed by $V$ with probability distribution $\mu \in \cP(\cX^V)$.
        
        We say that $(Y^v)_{v \in V}$ (or equivalently $\mu$) is a \emph{first-order local Markov random field} (abbreviated as 1-MRF) on $\cX^V$ if for every finite set $A \subset V$, the collection of random variables $Y^A=(Y^v)_{v\in A}$ is conditionally independent of $Y^{(A \cup \partial A)^c}=(Y^v)_{v\in (A \cup \partial A)^c}$ given $Y^{\partial A}$. 
        
        We say that $(Y^v)_{v \in V}$ (or equivalently $\mu$) is a \emph{second-order local Markov random field} (abbreviated as 2-MRF) on $\cX^{V}$ if for every finite set $A \subset V$ the collection of random variables $Y^A$ is conditionally independent of $Y^{(A \cup \partial^2 A)^c}$ given $Y^{\partial^2 A}$.

        A collection of $\cX$-valued random elements $(Y_v)_{v \in V}$ is said to form a \emph{second-order global Markov Random Field} if for any sets $A \subset V$, $B \subset V \setminus (A \cup \partial^2 A)$, the collection of random variables $Y^A$ is conditionally independent of $Y^{B}$ given $Y^{\partial^2 A}$.
    \end{definition}
    When the space $\cX^V$ is clear from the context, we will simply say that $(\cX^v)_{v \in V}$, or equivalently its law $\mu$, is a 1MRF \ or 2MRF.

    \subsection*{Summary}

    In Section \ref{section:MainresultsLoc}, we provide an accessible summary of the results we prove in this paper. These results are specific to \emph{fractional Brownian motion}, which is the canonical example of the Gaussian processes that we use as additive noise. 

    In Section \ref{section:Gaussian}, we study the existence and properties of the \emph{fundamental martingale}. In Section \ref{subsection:HeuristicFundMart}, we provide a discrete time justification for the existence of a Volterra kernel that transforms Gaussian processes into a martingale and show that the existence of a fundamental martingale is equivalent to the Gaussian process satisfying a \emph{local non-determinism} condition. In Section \ref{subsection:VolterraProcess}, we consider continuous time Gaussian Volterra  processes and demonstrate under an appropriate assumption (see Assumption \ref{assumption:VolterraK}) that allows us to bijectively transform this Volterra process into a Brownian motion, and further that we can also bijectively transform stochastic processes that take their value on the reproducing kernel Hilbert space of the Volterra process to stochastic processes that take their values on the reproducing kernel Hilbert space of Brownian motion. In Section \ref{subsection:SLND+fm}, we introduce the concept of ``\emph{secure local non-determinism}'' and show that this is equivalent to Assumption \ref{assumption:VolterraK}. Finally, in Section \ref{subsection:Girsanov} we prove a Girsanov Theorem (see Theorem \ref{theorem:ap:girsanov}) for Gaussian Volterra processes that are securely locally non-deterministic. 

    In Section \ref{section:2MRF}, we consider collections of stochastic differential equations that interact locally with one another through the drift term based on the underlying structure of a graph. In Section \ref{subsection:WeakExist+Uniq}, we prove weak existence and uniqueness using Theorem \ref{theorem:ap:girsanov}. This result also includes the case where the graph $(V, E)$ has countably infinite vertex set which we believe is a new result even when the collection of Gaussian Volterra processes are taken to be Brownian motions. In Section \ref{subsection:MRF}, we prove that such collections of locally interacting stochastic differential equations form a \emph{Markov random field}. The proof when the graph is finite follows from the well known $2^{nd}$-\emph{Hammersey-Clifford} result, but when the vertex set is taken to be countably infinite this becomes more challenging and we have to adapt a truncation argument (the full details of which can be found in Appendix \ref{section:appendix}. 
    
    \subsection*{Notation}

    Let $\cC_{T}^d = C([0,T]; \bR^d)$ be the vector space of continuous functions on the interval $[0,T]$ taking their values in $\bR^d$ paired with the supremum norm $\| x\|_{\infty, t} = |x_0| + \sup_{s \in [0,t]} |x_s|$. Similarly, let $\cC_{0, T}^d$ be the subspace of continuous functions with $x_0 = 0$. Using the canonical decomposition that for any $x\in C_T^d$, we can equivalently write $x \equiv (x_0, x-x_0) \in \bR^d \times \cC_{0, T}^d$. 

    \subsubsection*{Measure theory}
    For $p \geq 1$, $(\cX, d)$ a metric space with Borel $\sigma$-algebra $\cB(\cX)$ and two probability measures $\mu, \nu \in \cP(\cX)$ we denote the $p$-\emph{Wasserstein distance} by
    \begin{equation*}
        \bW_d^{(p)} \big[ \mu, \nu \big] = \left( \inf_{\pi \in \Pi(\mu, \nu)} \int_{\cX \times \cX} d(x, y)^p d\pi(x, y) \right)^{\tfrac{1}{p}}
    \end{equation*}
    where $\Pi(\mu, \nu)$ is the set of all measures on $\Big( \cX \times \cX, \sigma \big( \cB(\cX) \times \cB(\cX) \big) \Big)$ with marginals $\mu$ and $\nu$. For a measure $\mu \in \cP(\cX)$ and $f:\cX \to \bR$ we denote
    \begin{equation*}
        \big\langle \mu, f \big\rangle = \int_{\cX} f(x) d\mu(x). 
    \end{equation*}

    For two measures $\mu, \nu \in \cP(\cX)$ we use $\nu << \mu$ to denote that $\nu$ is absolutely continuous with respect to $\mu$. We denote the \emph{relative entropy functional} by
    \begin{equation}
        \label{eq:RelativeEntropy}
        \bH\big[ \mu\big| \nu \big] = \left\{
        \begin{aligned}
            &\int_\cX \log\bigg( \frac{ d\mu}{d\nu}(x) \bigg) d\mu(x) 
            &\quad& \mbox{if}\quad
            \log\Big( \tfrac{ d\mu}{d\nu} \Big) \in L^1\big( \cX, \mu; \bR \big), 
            \\
            &\infty &\quad& \mbox{otherwise. }
        \end{aligned}
        \right.
    \end{equation}
    Sometimes, $\bH\big[ \mu \big| \nu \big]$ is referred to  as the \emph{Kullback–Leibler divergence}. 

    \section{Main results}
    \label{section:MainresultsLoc}

    The main contributions of this paper are the extension of Girsanov's Theorem for a collection of Gaussian processes and an application of this to proving that collections of locally interacting equations form a 2-Markov Random Field. We also develop many of the concepts established in this paper to prove a \emph{Mimicking Theorem} for the study of McKean-Vlasov equations in the sequel \cite{Hu2023Fundamental}

    \subsection{Fractional Brownian motion}

    The focus of this work is for stochastic differential equations driven by an additive Gaussian noise, the most intuitive of these being \emph{fractional Brownian motion}:
    \begin{definition}
        \label{definition:fBm}
        A one dimensional \emph{fractional Brownian motion} $(Z_t)_{t\in [0,T]}$ with Hurst parameter $H\in (0, 1)$ is a centered Gaussian process with covariance defined for $t, s\in [0,T]$ by
        \begin{equation}
            \label{eq:covariance-fbm}
            \bE\Big[ Z_t Z_s \Big] = R(t, s) = \tfrac{1}{2} \Big( |t|^{2H} + |s|^{2H} - |t-s|^{2H} \Big). 
        \end{equation}
        A $d$-dimensional fractional Brownian motion is a centered Gaussian process with covariance
        \begin{equation*}
            \bE\Big[ \big\langle Z_t , Z_s \big\rangle_{\bR^d} \Big] = d \cdot R(t, s). 
        \end{equation*}
        Where notation can be reduced, we will not write the identity matrix.  
    \end{definition}
    A fractional Brownian motion is a continuous time stochastic process that satisfies $Z_0 = 0$ and is the unique centered Gaussian process that is self-similar and has stationary increments:
    \begin{equation*}
        Z_{\lambda t} \sim |\lambda|^H \cdot Z_t 
        \quad \mbox{and}\quad
        Z_t - Z_s \sim Z_{t-s}. 
    \end{equation*}

    The case $H=1/2$ corresponds to that of Brownian motion, but otherwise $Z$ is neither a Markov process nor a semimartingale. However, much like Brownian motion the process is $\bP$-almost surely $\alpha$-H\"older continuous for $\alpha < H$.

    A fractional Brownian motion can be written as a Gaussian Volterra process 
    \begin{equation*}
        Z_t = \int_0^t K(t, s) dW_s 
    \end{equation*}
    where $K(t, s)=0$ for $t<s$ and for $s<t$
    \begin{equation}
        \label{eq:fBmKernel}
        K(t, s) = \left\{
        \begin{aligned}
            &c_H \cdot s^{\tfrac{1}{2}-H} \int_s^t (u-s)^{H - \tfrac{3}{2}} u^{H-\tfrac{1}{2}} du
            &\quad
            H>\tfrac{1}{2},
            \\
            &c_H\bigg( \Big( \frac{t(t-s)}{s} \Big)^{H-\tfrac{1}{2}} - \big( H- \tfrac{1}{2} \big) \cdot s^{\tfrac{1}{2}-H} \int_s^t u^{H-\tfrac{3}{2}} (u-s)^{H-\tfrac{1}{2}} du \bigg)
            &\quad
            H<\tfrac{1}{2}
        \end{aligned}
        \right.
    \end{equation}
    where
    \begin{equation*}
        c_H = \left\{
        \begin{aligned}
            &\bigg( \frac{H(2H-1)}{\beta(2 - 2H, H- \frac{1}{2})} \bigg)^{\frac{1}{2}} 
            &\quad
            H>\tfrac{1}{2},
            \\
            &\bigg( \frac{2H}{(1 - 2H) \beta(1 - 2H, H + \frac{1}{2})}\bigg)^{\frac{1}{2}}
            &\quad
            H<\tfrac{1}{2} 
        \end{aligned}
        \right.
    \end{equation*}
    see for example \cite{nualart2006malliavin}*{Chapter 5}. Through direct computation, we can verify via the It\^o isometry that 
    \begin{align*}
        \bE\Big[ Z_t Z_s \Big] = \int_0^T K(t, u) K(s, u) du = R(t, s). 
    \end{align*}
    
    \subsubsection*{The fundamental Martingale}

    The fundamental martingale is concept first introduced in \cite{Norros1999Elementary} to understand the change of measure for stochastic differential equations driven by a fractional Brownian motion and was subsequently developed in \cites{Kleptsyna2000General, Kleptsyna2000Parameter} to study filtering problems for stochastic differential equations driven by fractional Brownian motion. We use a slightly different Volterra kernel (Equation \eqref{eq:fbm_Volterra} below) to the one stated in \cite{Norros1999Elementary} which allows us to map directly onto a Brownian motion. The Volterra kernel is somewhat more complicated, but the advantage is that we do not need to account for additional quadratic variation terms that arise in \cites{Kleptsyna2000General, Kleptsyna2000Parameter}. 

    It is commonly understood that a fractional Brownian motion is not a semimartingale; indeed fractional Brownian motion often chosen as the go-to stochastic process that is not a semimartingale. However, consider the following Volterra process driven by a fractional Brownian motion $Z$ (with Hurst parameter $H\in (0,1)$)
    \begin{equation*}
        W_t^* = \int_0^T L(t, s) dZ_s 
    \end{equation*}
    where $L(t, s)=0$ for $t<s$ and for $s<t$
    \begin{equation}
        \label{eq:fbm_Volterra}
        L(t, s) = \left\{
        \begin{aligned}
            &\Big( \frac{s(t-s)}{t} \Big)^{\tfrac{1}{2}-H} - (H-\tfrac{1}{2}) s^{\tfrac{1}{2} - H} \int_s^t (r-s)^{\tfrac{1}{2}-H} r^{H-\tfrac{3}{2}} dv
            &\quad
            H>\tfrac{1}{2},
            \\
            &s^{\tfrac{1}{2}-H} \int_s^t (r-s)^{-H-\tfrac{1}{2}} r^{H-\tfrac{1}{2}} dr
            &\quad
            H<\tfrac{1}{2}.
        \end{aligned}
        \right.
    \end{equation}
    The Volterra kernel $L:[0,T] \to \fWIC$ has the unique property that
    \begin{align*}
        \bE\Big[ W_t^* W_s^* \Big] = t\wedge s
    \end{align*}
    so that $W^*$ is a Brownian motion. We prove that the stochastic process $(W_t^*)_{t\in [0,T]}$ is a local-martingale with respect to the filtration $\bF^Z = (\cF_t^Z)_{t\in [0,T]}$ and further the two filtrations $\bF^Z$ and $\bF^{W^*}$ are equal. Therefore, while a fractional Brownian motion is not a semimartingale, it is a Volterra convolution away from a Brownian motion. We prove these results not just for fractional Brownian motion but also for a class of Gaussian processes in Section \ref{section:Gaussian}. 
    
    \subsection{Girsanov's Theorem}
	
    The dynamics of a stochastic process of the form
    \begin{equation*}
        X_t = X_0 + \int_0^t b\big( s, X[s] \big) ds + Z_t
    \end{equation*}
    can be transformed via a convolution with the Volterra kernel \eqref{eq:fbm_Volterra} into the stochastic process
    \begin{equation*}
        X_t^{\dagger} = X_0 + \int_0^t L(t, s) dX_s
        = 
        X_0 + \int_0^t Q^{b}\big (s, X[s] \big)ds + W_t^*
    \end{equation*}
    where for any progressively measurable function $b: [0,T] \times \cC_T^d \to \bR^d$ we define $Q^b:[0,T] \times \cC_T^d \to \bR^d$ by
    \begin{align}
        \label{eq:Q-transform_fbm}
        Q^b\big( t, X[t] \big) :=& \frac{d}{dt} \int_0^t L(t, s) b\big(s, X[s] \big) ds
        \\
        \nonumber
        =& (\tfrac{1}{2} - H) t^{H-\tfrac{1}{2}} \int_0^t (t-s)^{-(\tfrac{1}{2}+H)} b\big( s, X[s] \big)  s^{\tfrac{1}{2}-H} ds. 
    \end{align}
    This choice of definition is motivated by the fact that for every $t\in [0,T]$
    \begin{equation*}
        \int_0^t Q^b\big( s, X[s] \big) ds = \int_0^t L(t, s) b\big( s, X[s] \big) ds
        \quad \mbox{and}\quad
        \int_0^t b\big( s, X[s] \big) ds = \int_0^t K(t, s) Q^b\big( s, X[s] \big) ds
    \end{equation*}
    where the Volterra kernel $L(t, \cdot)$ was defined in Equation \eqref{eq:fbm_Volterra}. 

    When the integral
    \begin{equation*}
        \int_0^T \Big| Q^b\big( t, x[t] \big) \Big|^2 dt < \infty
        \quad \mbox{we conclude that}\quad
        \Big\| \int_0^\cdot b\big( t, x[t] \big) dt \Big\|_{\RKHS_T}< \infty
    \end{equation*}
    where $\RKHS_T$ is the reproducing kernel Hilbert space generated by the covariances $R(t, \cdot)$ defined in Equation \eqref{eq:covariance-fbm}. As the Hilbert space of square integrable functions $L^2\big([0,T]; \bR \big)$ has a natural projection onto the Hilbert space $L^2\big( [0,t]; \bR \big)$ by multiplying by the indicator function $\1_{[0, t]}$, we define a projection operation $\Pi_t: \RKHS_T \to \RKHS_t$ defined by
    \begin{equation}
        \label{eq:definition:RKHSprojection*}
        \Pi_t\Big[ \int_0^\cdot b\big(s, X[s] \big) ds \Big] = \int_0^{\cdot \wedge t} K(\cdot, s) Q^b\big(s, X[s] \big) ds. 
    \end{equation}
    
    Further, we establish a Girsanov type result for stochastic processes with an additive fractional Brownian motion:
    \begin{theorem} 
        \label{theorem:ap:girsanov*}
        Let $d \in \bN$ let $P_0 \in \cP( \bR^d )$ and let $\gamma \in \cP(\cC_{0, T}^d)$ be the law of a fractional Brownian motion. We denote $P^* = P_0 \times \gamma \in \cP(\cC_T^d)$. 
        
        Let $b : [0,T] \times \cC_T^d \to \bR^d$ be progressively measurable and suppose that
        \begin{equation*}
            \Big\| \int_0^\cdot b\big( s, X[s] \big) ds \Big\|_{\RKHS_T}< \infty \quad \mbox{$P^*$-almost surely. }
        \end{equation*}
        Further, suppose that on the probability space $\big( \cC_T^d, \cB'(\cC_T^d), P^* \big)$ the process
        \begin{equation}
            t\mapsto \cZ_t\Big[ \int_0^\cdot b\big( s, X[s] \big) ds \Big] := \exp\Bigg( \delta\bigg( \Pi_t\Big[ \int_0^{\cdot} b\big(s, X[s] \big) ds \Big] \bigg) - \tfrac{1}{2} \Big\| \int_0^\cdot b\big(s, X[s] \big) ds \Big\|_{\RKHS_t}^2 \Bigg)
        \end{equation}
        (where $\delta$ is the Malliavin divergence) is a martingale that satisfies that
        \begin{equation}
            \bE^{P^*}\bigg[ \cZ_T\Big[ \int_0^\cdot b\big( s, X[s] \big) ds \Big] \bigg] = 1. 
        \end{equation}
        Let $P$ be the probability measure defined by
        \begin{equation*}
            \frac{dP}{dP^*} \bigg|_{\cF_t} = \cZ_t\Big[ \int_0^\cdot b\big( s, X[s] \big) ds \Big]. 
        \end{equation*}
        Then the law of the process
        \begin{equation*}
            X_t - \int_0^t b\big( s, X[s] \big) ds \quad\mbox{under $P$}
        \end{equation*}
        is the same as the law of the canonical process $X$ under $P^*$. 
    \end{theorem}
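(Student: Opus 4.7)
The plan is to reduce the statement to the classical Girsanov theorem for Brownian motion by pushing everything through the fundamental martingale transform established in Section \ref{subsection:VolterraProcess}. Under $P^*$, the canonical process decomposes as $X = X_0 + Z$ with $Z$ a fractional Brownian motion, and by the fundamental martingale construction the process $W^*_t = \int_0^t L(t,s)\, dZ_s$ is a Brownian motion on $(\cC_T^d, \cB'(\cC_T^d), P^*)$ with $\bF^Z = \bF^{W^*}$. The strategy is: (i) rewrite $\cZ_t$ as a standard Dol\'eans-Dade exponential against $W^*$; (ii) apply the ordinary Girsanov theorem to $W^*$; and (iii) invert the Volterra transform to recover the fractional Brownian structure of $X - \int_0^\cdot b(s,X[s])\,ds$ under $P$.

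For step (i), I would use the RKHS isometry from Section \ref{subsection:VolterraProcess} identifying $\RKHS_t$ with $L^2([0,t];\bR^d)$ via $h \mapsto \dot h^K$ where $h_\cdot = \int_0^\cdot K(\cdot, s)\dot h^K_s\,ds$. Applied to $h = \int_0^\cdot b(s,X[s])\,ds$, this isometry yields $\dot h^K_s = Q^b(s,X[s])$ by the defining relation of $Q^b$ in \eqref{eq:Q-transform_fbm}. The Malliavin divergence on the Gaussian space of $Z$ then coincides with the It\^o integral against $W^*$ of the representer, so
\begin{equation*}
\delta\!\left( \Pi_t\!\left[\int_0^\cdot b(s,X[s])\,ds\right]\right) = \int_0^t Q^b(s,X[s])\,dW^*_s,
\qquad \Big\|\int_0^\cdot b\,ds\Big\|_{\RKHS_t}^2 = \int_0^t |Q^b(s,X[s])|^2\,ds.
\end{equation*}
Hence $\cZ_t = \exp\!\left(\int_0^t Q^b\,dW^* - \tfrac12 \int_0^t |Q^b|^2\,ds\right)$, which is precisely a Brownian Dol\'eans-Dade exponential. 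The assumption that $\cZ$ is a true martingale with $\bE^{P^*}[\cZ_T]=1$ is then exactly what Girsanov needs.

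For step (ii), the classical Girsanov theorem applied to $W^*$ on $\bF^{W^*}=\bF^Z$ gives that
\begin{equation*}
\widetilde W^*_t := W^*_t - \int_0^t Q^b(s,X[s])\,ds
\end{equation*}
is a $P$-Brownian motion. For step (iii), I use the Sonine-pair identities recalled after \eqref{eq:Q-transform_fbm}: $\int_0^t Q^b(s,X[s])\,ds = \int_0^t L(t,s)\,b(s,X[s])\,ds$, so that setting $Y_t := X_t - \int_0^t b(s,X[s])\,ds$ one has $\widetilde W^*_t = \int_0^t L(t,s)\,dY_s$. Inverting the Volterra convolution via the other Sonine identity $\int_0^t b\,ds = \int_0^t K(t,s)\,Q^b(s,X[s])\,ds$ (applied pathwise to $Y$) yields $Y_t = \int_0^t K(t,s)\,d\widetilde W^*_s$, which by the defining representation of fractional Brownian motion in \eqref{eq:fBmKernel} means $Y$ is a fractional Brownian motion under $P$. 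Since $X_0$ is $\cF_0$-measurable, $\cZ_0=1$, so the law of $X_0$ is unchanged, and $Y$ is independent of $X_0$ under $P$ by the same argument applied conditionally. This yields the claimed equality in law.

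The main obstacle is the correct identification in step (i): one must verify that the Malliavin divergence on the abstract Gaussian space of $Z$ really agrees with the It\^o integral against $W^*$ under the projection $\Pi_t$, and that $\Pi_t[\int_0^\cdot b\,ds]$ lies in the domain of $\delta$ under the integrability hypothesis. This hinges on the isometry of Hilbert space filtrations guaranteed by secure local non-determinism (Definition \ref{definition:SLND}) and is the central payoff of the framework developed in Section \ref{section:Gaussian}; once that identification is in hand, the remainder is the standard Girsanov argument followed by a pathwise Volterra inversion.
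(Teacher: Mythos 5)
Your proposal is correct and follows essentially the same route as the paper's proof of the general result (Theorem \ref{theorem:ap:girsanov}, of which Theorem \ref{theorem:ap:girsanov*} is the fractional Brownian special case): transform $X$ to the fundamental Brownian motion $W^*$ via the kernel $L$, identify $\cZ_t$ with the classical Dol\'eans--Dade exponential $\exp\big(\int_0^t Q^b\,dW^* - \tfrac12\int_0^t|Q^b|^2\,ds\big)$, apply the ordinary Girsanov theorem, and invert the Volterra convolution via Proposition \ref{prop:Existence_Q} to recover the fractional Brownian structure. Your step (i), making explicit the identification of the Malliavin divergence $\delta(\Pi_t[\cdot])$ with the It\^o integral against $W^*$, is stated more carefully than in the paper's proof, which leaves it implicit.
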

    This is a slight adaption of the well known result first proved in \cite{Decreusfond1999Stochastic} and we emphasise that we prove this result for a much richer class of Gaussian processes (see Theorem \ref{theorem:ap:girsanov} and Section \ref{section:Gaussian} for more details). 
    
    \subsection{Locally interacting stochastic differential equations} 
    
    For some $(V, E) \in \cG$, we are interested in studying the correlation between the collection of random variables from the solution of the locally interacting stochastic differential equation 
    \begin{equation}
        \label{eq:theorem:MRF-1}
        dX_t^u = b_u\Big( s, X^u[s], X^{N_u}[s] \Big) ds + dZ_t^u,
        \quad
        u \in V,
        \quad
        (X_0^u)_{u\in V} \sim \mu_0. 
    \end{equation}
    The collection of fractional Brownian motions $(Z^u)_{u\in V}$ will all be independent of one another. 

    \begin{theorem}
        \label{theorem:ExUnfbm}
        Let $(V, E)$ be a countably infinite locally finite graph, let $(b_u)_{u\in V}$ be a collection of functions and let $\mu_0 \in \cP\big( (\bR^d)^V \big)$. 
        
        Suppose that for every $u \in V$, the function
        \begin{equation*}
            b_u:[0,T] \times \cC_T^d \times (\cC_T^d)^{N_u} \to \bR^d
        \end{equation*}
        is progressively measurable and that there exists $M \in L^2\big([0,T]; \bR\big)$ such that for every $(X^u, X^{N_u}) \in \cC_T^d \times (\cC_T^d)^{N_u}$, 
        \begin{equation}
            \label{eq:assumption:MRF-1-linear*}
            \left.
            \begin{aligned}
                &\bigg\| \int_0^{\cdot} b_u \big( s, X^u[s], X^{N_u}[s] \big) ds \bigg\|_{\RKHS_T}^2 < \infty,
                \\
                &\Big| Q^{b_u}\big(t, X^u[t], X^{N_u}[t] \big) \Big|\leq M_t \cdot F_u\big( X^u, X^{N_u} \big) \quad \mbox{and}
                \\
                \exists \epsilon>0 \quad \mbox{such that}\quad& \sup_{u\in V} \bE\bigg[ \exp\Big( \epsilon \cdot F_u\big( Z^u+X_0^u, (Z+X_0)^{N_u} \big) \Big) \bigg]< \infty. 
            \end{aligned}
            \qquad \right\}
        \end{equation}
        Then there exists a unique weak solution to the collection of stochastic differential equations
        \begin{equation*}
            dX_t^u = b_u\Big( t, X^u[t], X^{N_u}[t] \Big) dt + dZ_t^u, \quad u \in V, \quad (X_0^u)_{u\in V} \sim \mu_0. 
        \end{equation*}
    \end{theorem}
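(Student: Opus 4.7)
The plan is to apply Girsanov's theorem (Theorem \ref{theorem:ap:girsanov*}) on an infinite-product reference space and then pass from finite to infinite vertex sets by exploiting the local finiteness of $(V,E)$. I would set $(\Omega, P^*) := \bigl( (\cC_T^d)^V, \mu_0 \otimes \bigotimes_{u \in V} \gamma^u \bigr)$, where $\gamma^u$ denotes the law of a fractional Brownian motion on $\cC_{0,T}^d$, so that under $P^*$ the canonical coordinates $X^u = X_0^u + Z^u$ are independent and the hypotheses in \eqref{eq:assumption:MRF-1-linear*} are imposed against the correct reference law. For each finite $F \subset V$, set
\[ \cZ_T^F := \prod_{u \in F} \cZ_T\Big[\int_0^{\cdot} b_u\bigl(s,X^u[s],X^{N_u}[s]\bigr)\,ds\Big]. \]

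A Khasminskii-type time-slicing of $[0,T]$, using $|Q^{b_u}| \leq M_t F_u$ and $\sup_u \bE^{P^*}[\exp(\epsilon F_u)] < \infty$, would show that $\cZ_T^F$ is a true $P^*$-martingale with $\bE^{P^*}[\cZ_T^F] = 1$: partition $[0,T]$ into subintervals on which $\int M_t^2\,dt$ is small enough that a local Novikov-type criterion is implied by the exponential moment of $F_u$, then concatenate via the tower property. Theorem \ref{theorem:ap:girsanov*} then produces the measure $P_F := \cZ_T^F \cdot P^*$ on $\Omega$, under which the SDE \eqref{eq:theorem:MRF-1} holds for every $u \in F$ while each $X^v$ with $v \notin F$ remains a shifted fractional Brownian motion.

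Because the family $\{P_F\}$ does \emph{not} form a consistent projective system -- enlarging $F$ changes the boundary data felt by interior vertices -- I would instead take a cofinal sequence $F_n \uparrow V$ and prove tightness of $\{P_{F_n}\}$ on the Polish product space $(\cC_T^d)^V$. Tightness reduces to single-vertex tightness, which I would obtain from the Volterra representation $\int_0^t b_u\,ds = \int_0^t K(t,s)\, Q^{b_u}(s,\cdot)\,ds$ combined with the exponential moment on $F_u$, yielding uniform-in-$n$ moment and modulus-of-continuity bounds for $X^u$ under $P_{F_n}$. Given any subsequential weak limit $P$ and any fixed $u \in V$, the SDE \eqref{eq:theorem:MRF-1} involves only the finitely many coordinates $\{u\} \cup N_u$, which lie in $F_n$ eventually, so the associated finite-dimensional martingale problem is closed under the weak limit and $P$ satisfies \eqref{eq:theorem:MRF-1} for every $u$ simultaneously; independence of the driving noises is inherited from the product structure of $P^*$.

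For uniqueness, any weak solution $P'$ must satisfy, on each finite window $W$ with $N_W \subseteq W$, the Radon--Nikodym identity $dP'|_{\cF_W} = \cZ_T^W\, dP^*|_{\cF_W}$ by applying the uniqueness clause of Theorem \ref{theorem:ap:girsanov*} backwards; consistency across a cofinal family of windows and Dynkin's $\pi$--$\lambda$ theorem then force $P' = P$. The hardest step I anticipate is the infinite-dimensional limit in the existence argument: the densities $\cZ_T^{F_n}$ are not uniformly $P^*$-integrable on the full space (their $P^*$-entropy grows linearly in $|F_n|$), so no direct $L^1$-convergence of Radon--Nikodym derivatives is available, and the argument must proceed through path-space tightness and a local martingale-problem characterisation, relying critically on the local finiteness of $(V,E)$ and the uniform-in-$u$ exponential moment assumption.
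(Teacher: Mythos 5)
Your finite-volume machinery coincides with the paper's: the product reference measure $P^* = \mu_0 \times \prod_{u}\gamma^u$, the product exponential density $\cZ_T^F$, and the time-sliced Novikov argument are exactly Proposition \ref{proposition:Novikov} and Theorem \ref{theorem:ap:girsanov} as deployed in the proof of Theorem \ref{theorem:ExUn}. Where you genuinely diverge is the passage to infinite $V$: the paper enumerates $V$, builds $P^n$ on $(\cC_T^d)^{V^n}$ by Girsanov, asserts that the family $(P^n)_{n}$ is consistent, and invokes Kolmogorov's extension theorem (Theorem \ref{theorem:KMET}); you instead argue that the family is \emph{not} projectively consistent and substitute tightness plus identification of subsequential limits. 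Your non-consistency observation is correct --- switching on the drift of a newly included vertex feeds back into the law of its neighbours inside $V^n$, so the $V^n$-marginals genuinely change --- and your compactness route is essentially the one the paper itself executes in Appendix \ref{section:appendix} (the entropy bound of Proposition \ref{pro:tightness} giving weak compactness, and Lemma \ref{le:infinitegraphlimit} identifying the limit by checking that the recentred processes are independent copies of $\gamma^u$). For existence your argument is therefore on at least as firm a footing as the main-text proof, at the cost of having to justify passing the map $X \mapsto X^u - \int_0^\cdot b_u(s,X^u[s],X^{N_u}[s])\,ds$ through a weak limit when $b_u$ is merely progressively measurable; bare tightness does not suffice for that, and the entropy/absolute-continuity control is what makes it work.

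The genuine gap is in your uniqueness argument. You propose to apply Girsanov ``backwards'' on finite windows $W$ with $N_W \subseteq W$, but for a connected, countably infinite, locally finite graph no finite window is closed under taking neighbourhoods, so such $W$ do not exist; moreover on any finite $W$ the density $\cZ_T^W$ is a functional of $X^{W\cup \partial W}$, not of $X^W$ alone, so the claimed identity $dP'|_{\cF_W} = \cZ_T^W\, dP^*|_{\cF_W}$ is not even well posed, and consistency over windows plus Dynkin does not close. What one can extract from an arbitrary weak solution $P'$ is that the recentred processes $X^u - X_0^u - \int_0^\cdot b_u\,ds$ are independent with laws $\gamma^u$; converting that into uniqueness of the joint law needs a further step (for instance a conditional Girsanov computation on $W$ given the boundary paths $X^{\partial W}$, or a measurable reconstruction of $X^V$ from the noises), which your sketch does not supply.
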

    We prove a more general version of Theorem \ref{theorem:ExUnfbm} in Section \ref{section:2MRF}, but very briefly the proof relies on an adaption of Theorem \ref{theorem:ap:girsanov*} to random variables defined on locally convex topological vector spaces. To the best of our knowledge, this adaption is novel and avoids previous techniques that rely on compact embeddings. 

    \begin{remark}
        We emphasise that the dynamics of stochastic differential equations of the form \eqref{eq:theorem:MRF-1} are \emph{non-Markovian} in the sense that for every $u \in V$ the drift term $b_u(t, \cdot)$ is dependent on the path of the stochastic process up to time-$t$. We denote a path by $X[t]$ while the time marginal by $X_t$. 

        There are a couple of reasons for this choice of generality: Firstly, as we shall see below the paths of these locally interacting SDEs form a 2-Markov Random Field as a measure on pathspace in part due to the drift term being expressible as a clique functional. This would also be the case if the drift term was only dependent on time marginals, but it is more interesting to consider the most general setting. 

        Another good reason is that the Volterra transformation of the drift term, which we denote by $Q^b(t, \cdot)$, is by definition a functional on the entire path up to time-$t$ even when $b$ is only dependent on the time marginals of the path. By considering $b(t, \cdot)$ as a path functional, the bijective nature of the transformation $b \iff Q^b$ becomes more natural. 
    \end{remark}

    \begin{example}
        \label{example:Q}
        Let $K:[0,T] \to L^2([0,T]; \bR)$ as in Equation \eqref{eq:fBmKernel} be the Volterra kernel for fractional Brownian motion. Let $b:[0,T] \times \cC_T^d \to \bR^d$ be a progressively measurable and denote 
        \begin{align*}
            &Q_u^b: [0,T] \times \cC_T^d \to \bR^d
            \quad \mbox{defined by} \quad
            Q_u^b \Big( t, x[t] \Big):= \frac{d}{dt} \int_0^t L(t, s) b\Big( s, x[s] \Big) ds 
        \end{align*}
        where $L:[0,T] \to \fWIC$ is defined in \eqref{eq:fbm_Volterra}.

        Further, suppose that there exists $\epsilon>0$ such that
        \begin{equation}
            \label{eq:example:Q-fernique}
            \sup_{u\in V} \bE\bigg[ \exp\Big( \epsilon | X_0 |^2 \Big) \bigg]< \infty. 
        \end{equation}
        
        First consider the case where $H<\tfrac{1}{2}$. Suppose that there exists $M:[0,T] \to \bR$ such that
        \begin{align}
            \label{eq:example:Q-lin}
            &\Big| b\big(t, X[t] \big) \Big| \leq M_t \cdot \Big( 1 + \big\| X \big\|_{\infty, t} \Big)
            \quad \mbox{and}\quad
            \int_0^T \int_0^t (t-s)^{-(\tfrac{1}{2}+H)} |M_s| ds |M_t| dt< \infty. 
        \end{align}
        In particular, if $b$ satisfies a uniform in time linear growth condition then Equation \eqref{eq:example:Q-lin} follows. By adapting the techniques first detailed in \cite{Nualart2002Regularization}, we observe that
        \begin{align*}
            \Big| Q^b\big(s, X \big) \Big| =& \Big| s^{H-\tfrac{1}{2}} \cdot \int_0^s (s-r)^{-(\tfrac{1}{2}+H)} r^{\tfrac{1}{2}-H} b\big(r, X\big) dr \Big|
            \\
            \leq& \bigg( s^{H-\tfrac{1}{2}} \int_0^s (s-r)^{-\tfrac{1}{2}-H} r^{\tfrac{1}{2}-H} |M_r| dr \bigg) \cdot \Big( 1+ \big\| X \big\|_{\infty, s} \Big). 
        \end{align*}
        Further, by direct calculation we obtain that
        \begin{align*}
            \int_0^T \bigg| s^{H-\tfrac{1}{2}}& \int_0^s (s-r)^{-\tfrac{1}{2}-H} r^{\tfrac{1}{2}-H} |M_r| dr \bigg|^2 ds
            =
            \int_0^T s^{2H-1} \bigg( \int_0^s (s-r)^{-\tfrac{1}{2}-H} r^{\tfrac{1}{2}-H} \big| M_r \big| dr \bigg)^2 ds
            \\
            \leq& \int_0^T s^{2H-1} \int_0^s \int_0^s (s-r)^{-(\tfrac{1}{2}+H)} r^{\tfrac{1}{2}-H} |M_r| (s-u)^{-(\tfrac{1}{2}+H)} u^{\tfrac{1}{2}-H} |M_u| du dr ds
            \\
            \leq&\int_0^T \int_0^T \bigg( \int_{u \vee r}^T s^{2H-1} (s-r)^{-(\tfrac{1}{2}+H)} (s-u)^{-(\tfrac{1}{2}+H)} ds \bigg)\cdot  r^{\tfrac{1}{2}-H} |M_r| u^{\tfrac{1}{2}-H} |M_u| du dr 
            \\
            \leq& \int_0^T \int_0^T (u\vee r)^{H-\tfrac{1}{2}} (T- u \vee r)^{\tfrac{1}{2}-H} (u \vee r - u\wedge r)^{-(\tfrac{1}{2}+H)} (u\wedge r)^{\tfrac{1}{2}-H} |M_r| \cdot |M_u| du dr
            \\
            \leq& \int_0^T \int_0^r (r-u)^{-(\tfrac{1}{2}+H)} |M_u| |M_r| du dr < \infty. 
        \end{align*}
        To conclude, we remark by Ferniques Theorem and Equation \eqref{eq:example:Q-fernique} that for some choice of $\epsilon>0$, 
        \begin{equation*}
            \bE\bigg[ \exp\Big( \epsilon \big( 1 + \| Z+X_0\|^2 \big) \Big) \bigg] < \infty. 
        \end{equation*}
        As such, we conclude that Equation \eqref{eq:assumption:MRF-1-linear*} follows. 
    \end{example}

    \begin{example}
        \label{example:Q2}
        Now suppose that $H>\tfrac{1}{2}$ but we still have Equation \eqref{eq:example:Q-fernique}. Consider a function of the form
        \begin{equation}
            \label{eq:example:Q-lin-}
            \left.
            \begin{aligned}
                &b\big(s, x[s] \big):=\hat{b}\big(s, x_s \big)
                \quad\mbox{where} \quad
                \hat{b}:[0,T] \times \bR^d \to \bR^d
                \quad \mbox{and}
                \\
                &\Big| \hat{b}\big(s, x\big) - \hat{b}\big(t, y\big) \Big| \leq C \Big( |t-s|^{\gamma} + |x - y|^{\alpha} \Big)
                \quad \mbox{for}\quad
                \alpha \in (1-\tfrac{1}{2H}, 1), \quad \gamma > H-\tfrac{1}{2}. 
            \end{aligned}
            \quad \right\}
        \end{equation}
        Again, by adapting the techniques first detailed in \cite{Nualart2002Regularization} we have that
        \begin{align*}
            \Big| Q^b\big(s, x[s] \big) \Big| 
            =& 
            \hat{b}(s, x_s) s^{\tfrac{1}{2}-H} 
            + (H-\tfrac{1}{2}) s^{H-\tfrac{1}{2}} \hat{b}(s, x_s) \int_0^s \Big( s^{\tfrac{1}{2}-H} - r^{\tfrac{1}{2}-H} \Big) (s-r)^{-(\tfrac{1}{2}+H)}dr 
            \\
            &+ (H-\tfrac{1}{2}) s^{H-\tfrac{1}{2}} \int_0^s \Big( \hat{b}(s, x_s) - \hat{b}(r, x_s) \Big) r^{\tfrac{1}{2}-H} (s-r)^{-(\tfrac{1}{2}+H)} dr
            \\
            &+ (H-\tfrac{1}{2}) s^{H-\tfrac{1}{2}} \int_0^s \Big( \hat{b}(r, x_s) - \hat{b}(r, x_r) \Big) (s-r)^{-(\tfrac{1}{2}+H)} r^{\tfrac{1}{2}-H} dr. 
        \end{align*}
        By applying Equation \eqref{eq:example:Q-lin-}, we conclude that
        \begin{align*}
            \Big| Q^b\big(s, x[s] \big) \Big| 
            \leq& 
            s^{\tfrac{1}{2}-H} \Big( \big| \hat{b}(0, 0) \big| + C\big( |s|^{\gamma} + |x_s|^{\alpha} \big) \Big)
            \\
            &+ (H-\tfrac{1}{2}) s^{H-\tfrac{1}{2}} \Big( |\hat{b}(0, 0)| + C\big( |s|^{\gamma} + |x_s|^{\alpha} \big) s^{1-2H} \Big)
            \\
            &+ (H-\tfrac{1}{2}) s^{H-\tfrac{1}{2}} \int_0^s (s-r)^{\gamma-H - \tfrac{1}{2}} r^{\tfrac{1}{2}-H} dr
            \\
            &+ (H-\tfrac{1}{2}) s^{H-\tfrac{1}{2}} \int_0^s \big| x_s - x_r \big|^{\alpha} (s-r)^{-(\tfrac{1}{2}+H)} r^{\tfrac{1}{2}-H} dr. 
        \end{align*}
        Next, by evaluation we obtain that for some $\varepsilon>0$ chosen small enough that
        \begin{align*}
            \Big| Q^b\big(s, x[s] \big) \Big| 
            \leq& 
            C_T s^{\tfrac{1}{2}-H} \Big( |x_0| + \big\| x\|_{\infty}^{\alpha} + |s|^{\gamma} + \big| \hat{b}(0, 0)\big| + \big\| x \big\|_{H-\varepsilon}^{\alpha} s^{\alpha(H-\varepsilon)} \Big). 
        \end{align*}
        Notice that this choice of the function $F$ from Equation \eqref{eq:assumption:MRF-1-linear*} is not necessarily finite for every choice of $(X^u, X^{N_u}) \in \cC_T^d \times (\cC_T^d)^{N_u}$ and yet the expectation is finite because paths for which $F$ is not defined occur on a null set of the Gaussian measure. 
        
        Now we remark that since a fractional Brownian motion $Z$ with Hurst parameter $H$ has that for any choice of $\varepsilon \in (0, H)$, there exists some $\epsilon>0$ such that
        \begin{equation*}
            \bE\bigg[ \exp\Big( \epsilon\big\| Z\big\|_{H-\varepsilon}^2 \Big) \bigg]< \infty,
        \end{equation*}
        we conclude for $\epsilon>0$ chosen small enough that
        \begin{equation*}
            \bE\bigg[ \exp\Big( \epsilon \big( |X_0| + \big\| X_0 + Z\|_{\infty}^{\alpha} + \big\| X_0 + Z \big\|_{H-\epsilon}^{\alpha} \big) \Big) \bigg]< \infty. 
        \end{equation*}
    \end{example}

    \subsubsection*{Quenched 2-Markov Random Fields}
    
    As each stochastic differential equation is strongly correlated with its neighbours via the drift term and we restrict ourselves to the case where the number neighbours of any vertex is finite, we should not expect to observe any statistical decoupling that one would hope for in a mean-field setting. Therefore, the correlation between the processes marking any two vertices will be highly challenging to compute, even when those two vertices are far from one another. 

    We prove that the collection of stochastic differential equations considered in this work form a 2-Markov Random Field:
    \begin{theorem}
        \label{theorem:MRF-2}
        Let $(V, E)$ be a countably infinite locally finite graph, let $(b_u)_{u \in V}$ be a collection of functions and let $\mu \in \cP\big( (\bR^d)^{V} \big)$. Let $M \in L^1\big( [0,T]; \bR \big)$. 
        
        Suppose that:
        \begin{enumerate}
            \item for every $u \in V$, let $b_u: [0, T] \times \cC_T^d \times (\cC_T^d)^{N_u} \to \bR^d$ be progressively measurable and satisfies Equation \eqref{eq:assumption:MRF-1-linear*};
            \item the measure $\mu_0$ is a 2-Markov Random Field and there exists a collection of measures $(\lambda_u)_{u\in V}$ such that for any finite set $A\subset V$, the marginal measure $\mu_0^A$ is equivalent to the product measure
            \begin{equation*}
                \mu_0^{*,A} = \prod_{v\in A} \lambda_v
            \end{equation*}
            and the initial law $\mu_0$ satisfies 
            \begin{equation*}
                \sup_{v \in V}\int_{\bR^d} |x^v|^2 d\mu_0(x^V) < \infty; 
            \end{equation*}
        \end{enumerate}
        
        Then the collection of random variables $\big( V, E, (X^v[t])_{v\in V} \big)$ is a quenched 2-MRF. 
    \end{theorem}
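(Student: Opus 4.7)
The plan is to establish the finite-graph case first via a Girsanov change of measure combined with the $2^{nd}$-order Hammersley-Clifford theorem, and then to extend the result to the countably infinite case through a truncation-and-limiting argument whose technical details are left to the appendix.

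For the finite-graph case, I would work on the product path space $(\cC_T^d)^V$ and introduce the reference probability measure $P^*$ under which the initial conditions are distributed as $\mu_0$ and the additive noises $(Z^u)_{u\in V}$ are independent fractional Brownian motions, so that the paths $(X^u)_{u\in V}$ are independent given the initial data. Applying Theorem \ref{theorem:ap:girsanov*} coordinatewise (which is legitimate because, under the independence of the noises, the Malliavin divergence splits as a sum over vertices and the drifts $b_u$ are defined componentwise as in Equation \eqref{eq:assumption:MRF-1-linear*}) delivers
\begin{equation*}
    \frac{dP}{dP^*} = \prod_{u \in V} \cZ_T\Big[ \int_0^{\cdot} b_u\big(s, X^u[s], X^{N_u}[s]\big) ds \Big].
\end{equation*}
Each factor depends only on the coordinates $X^{\{u\} \cup N_u}$, and since any two neighbours of $u$ are at graph distance at most $2$, the set $\{u\} \cup N_u$ is a $2$-clique. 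The density $d\mu_0 / d\mu_0^*$ of the initial condition also factorises over $2$-cliques because $\mu_0$ is itself a $2$-MRF equivalent on every finite marginal to the product $\prod_v \lambda_v$. Thus the total density of $P$ with respect to the product reference measure $\prod_{u\in V}(\lambda_u \times \gamma)$ is a product of strictly positive functions indexed by $2$-cliques, and the sufficient direction of the $2^{nd}$-Hammersley-Clifford theorem yields the $2$-MRF property.

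For countably infinite $V$, there is no global product reference measure on the whole path space, so I would fix a finite $A \subset V$ and an increasing sequence of finite sets $\Lambda_n \uparrow V$ with $A \cup \partial^2 A \subset \Lambda_n$, and introduce a truncated system in which $b_u$ is replaced by $0$ for $u \notin \Lambda_n$ (so the coordinates outside $\Lambda_n$ reduce to $X_0^u + Z^u$). The truncated system falls under the finite-case argument above when restricted to $\Lambda_n$, so $X^A$ is conditionally independent of $X^{\Lambda_n \setminus (A \cup \partial^2 A)}$ given $X^{\partial^2 A}$ under each truncated law. The $2$-MRF property in the limit then follows once one shows that the truncated laws converge, when evaluated on cylinder events based in $A \cup \partial^2 A$, to the true law $P$, and that this convergence is compatible with conditioning on $X^{\partial^2 A}$.

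The main obstacle is precisely this last step. Conditional independence is fragile under general limits, so the convergence of the regular conditional distributions $P_n(X^A \in \cdot \mid X^{\partial^2 A})$ must be controlled uniformly in $n$. The exponential moment hypothesis built into Equation \eqref{eq:assumption:MRF-1-linear*} is what supplies a uniform-in-$n$ $L\log L$-type bound on the Girsanov densities produced by the truncations, which in turn allows one to interchange the limit in $n$ with the conditional expectation via dominated convergence. Making these bounds quantitative, and verifying that the conditional independence is preserved simultaneously for all finite $A$, is the content of the appendix argument adapted from \cite{lacker2020marginal} and referenced in Appendix \ref{section:appendix}.
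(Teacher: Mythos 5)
Your finite-graph argument is essentially the paper's: Girsanov gives a Radon--Nikodym derivative that is a product of factors indexed by the sets $\{u\}\cup N_u$, these are $2$-cliques, the initial density contributes its own $2$-clique factors, and Proposition \ref{proposition:2HammersleyClifford} concludes. The infinite-graph skeleton (truncate, apply the finite case, pass to the limit) is also the right shape, but two ingredients of the actual argument are missing, and the second is the one that makes your limiting step go through.

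First, the truncation is not only a modification of the drifts: the paper replaces the induced subgraph on $V_n$ by the graph $G_n$ of Definition \ref{definition:graph_truncation}, in which the outer shell $U_n=V_n\setminus V_{n-2}$ is made complete. This is needed because the marginal $\mu_0^{V_n}$ of a $2$-MRF need not be a $2$-MRF for the induced subgraph --- marginalising creates dependencies among boundary vertices --- and Lemma \ref{le:MRFprojections} only guarantees a $2$-clique factorisation of $\mu_0^{V_n}$ with respect to $G_n$. Without this your finite-case argument does not apply to the truncated system. Second, and more importantly, the paper does not control the convergence of the conditional laws $P^{n,A}_t[\,\cdot\,|\,\partial^2A]$ via uniform integrability of the densities; instead Proposition \ref{pr:specification-finitegraph} (via Lemma \ref{le:specification-abstract}) shows that these conditional laws are \emph{exactly equal} for all $n\geq n_0$, because the drifts and the $2$-clique factors touching $A$ are unchanged by the truncation once $A\cup\partial^2A\subset V_{n_0-3}$. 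This gives a single measurable $\varphi$ with $\varphi(X^{\partial^2A}[t])=\bE^{P^n}[f(X^A[t])\,|\,X^{\partial^2A}[t]]$ for every large $n$, after which one only needs the convergence $P^n\to P$ of Lemma \ref{le:infinitegraphlimit} (supplied by the entropy bounds of Proposition \ref{pro:tightness}) applied to the fixed test function $\varphi\cdot g\cdot h$ to transfer the identity to $P$. Your proposed route --- an $L\log L$ bound on the Girsanov densities plus dominated convergence to interchange the limit with conditioning --- is not obviously sufficient: regular conditional distributions are unstable under weak limits even with uniform integrability, and you give no mechanism that pins down the limiting conditional law. The exact local consistency of the specifications is the missing idea that replaces this step.
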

    The first thing to note is that we do not require that the graph is finite, only that every vertex has a finite neighbourhood. Secondly, as has already been explained in detail in \cite{lacker2020Locally}, the law of the process forms a 2-MRF as a measure on pathspace and it is not the case that the time marginals of the stochastic processes form a 2-MRF. In a sentence, this is because one needs to condition on the entire path of the collection of processes in the separating set and the $\sigma$-algebra generated by only the time marginals is not large enough. 
    
    To prove Theorem \ref{theorem:MRF-2}, which follows from Theorem \ref{theorem:MRF-1}, we first consider the finite graph setting. In this setting, we can write down an explicit Radon-Nikodym derivative between the law of the law of the solution and a reference measure (which we choose to be the product law the collection of independent fractional Brownian motions). This has a natural 2-clique factorisation, so by Proposition \ref{proposition:2HammersleyClifford} we conclude. The extension to the infinite graph setting is more technical, but the techniques are similar to those used in \cite{lacker2020Locally} and we include an adaption of them here for completeness. 
	
    \section{The fundamental martingale and Girsanov's Theorem}
    \label{section:Gaussian}

    The \emph{fundamental martingale} is a concept that was first introduced in \cite{Norros1999Elementary} and further developed in \cites{Kleptsyna2000General, Kleptsyna2000Parameter} as a tool for studying fractional Brownian motion. While results that rely on the existence of the fundamental martingale of fractional Brownian motion are often well cited in the literature, the concept has not received much study in recent years. It is widely known that a fractional Brownian motion is not a martingale (under the canonical filtration), nor indeed a semimartingale; this is often cited as a reason for the study of such processes. It is also well known that fractional Brownian motion admits a representation as a Volterra process driven by a Brownian motion. What is less often used in the literature is that there is a Volterra kernel (defined in Equation \ref{eq:fbm_Volterra}) that transforms a fractional Brownian motion into a Brownian motion. 
    
    A stochastic process that has a fundamental martingale is one which can be transformed into a martingale via convolution with a Volterra kernel. While the motivating example that we consider is a fractional Brownian motion, we emphasise that we also consider a much larger class of Gaussian processes. 

    \subsection{Discrete time intuition}
	\label{subsection:HeuristicFundMart}
	
	In order to provide some intuition to the purpose of the fundamental martingale, consider the following discrete time Gaussian process:
	
	Let $\rZ = ( Z_n)_{n\in \bN}$ be a sequence of Gaussian random variables on a probability space $(\Omega, \cF, \bP)$ and for each $n\in \bN$ let the cumulative vector $\rZ_n = (Z_i)_{i=1, ..., n}$ be an $n$-dimensional centred Gaussian vector with covariance matrix
	\begin{equation*}
		\rR_n = \bE\Big[ \rZ_n \cdot \rZ_n^T \Big] = \Big( \bE\big[ Z_i \cdot Z_j \big] \Big)_{i, j = 1, ..., n}. 
	\end{equation*}
	Suppose (for the moment) that for each $n\in \bN$ the covariance matrix $\rR_n$ is \emph{invertible}. Then for each $n\in \bN$ the Gaussian vector $\rZ_n$ has density
	\begin{equation*}
		p_n(x) = \frac{1}{\sqrt{(2\pi)^n \cdot \mbox{det}(\rR_n)}} \cdot \exp\Big( - \frac{1}{2} x^T (\rR_n)^{-1} x \Big). 
	\end{equation*}
	By inductively defining the matrices, row vectors and scalars such that
	\begin{equation*}
		\rR_{n+1} = 
		\begin{pmatrix}
			\rR_n ,&  \rR_{n1} \\
			\rR_{1n} ,& r_{n+1}
		\end{pmatrix}
		\quad \mbox{and}\quad
		(\rR_{n+1})^{-1} = 
		\begin{pmatrix}
			\rS_n,& \rS_{n1} \\
			\rS_{1n},& s_{n+1}
		\end{pmatrix}, 
	\end{equation*} 
	we get the identities
	\begin{equation}
		\label{eq:Heuristic-id}
		\begin{aligned}
			\rS_n \cdot \rR_n + \rS_{n1} \cdot \rR_{1n} =& I_n
			&\mbox{and}&\quad
			\rS_{1n} \cdot \rR_n + s_{n+1} \cdot \rR_{1n} = 0
			\\
			\rS_n + \rS_{n1} \cdot \rR_{1n} \cdot ( \rR_n)^{-1} =& ( \rR_n)^{-1} 
			&\mbox{and}&\quad
			\rS_{1n} + s_{n+1} \cdot \rR_{1n} \cdot (\rR_n)^{-1} = 0. 
		\end{aligned}
	\end{equation}
	
	We define the sequence of random variables
	\begin{equation}
        \label{eq:FundMart-DiscreteCase}
		\big( M_n \big)_{n\in \bN} 
		\quad \mbox{where} \quad
		M_n := \textbf{1}_n^T \cdot (\rR_n)^{-1} \cdot \rZ_n,
	\end{equation}
	where $\textbf{1}_n$ is a column vector with all values equal to $1$. 
	
	\begin{lemma}
		Let $\rZ =(Z_n)_{n\in \bN}$ be a sequence of Gaussian random variables on a probability space $(\Omega, \cF, \bP)$ and suppose that for each $n\in \bN$ the covariance matrix $\rR_n$ is invertible. 
		
		Then the sequence of random variables $(M_n)_{n\in \bN}$ is a Martingale. 
	\end{lemma}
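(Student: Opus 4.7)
The plan is to verify the martingale property directly with respect to the natural filtration $\cF_n = \sigma(Z_1, \ldots, Z_n) = \sigma(\rZ_n)$. Integrability is immediate since each $M_n$ is a linear combination of jointly Gaussian variables and is therefore itself Gaussian, hence in $L^1$. So the real content is to show $\bE[M_{n+1} \mid \cF_n] = M_n$, and the heavy lifting will be the block-matrix algebra that matches the identities in \eqref{eq:Heuristic-id}.

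First I would expand $M_{n+1}$ using the block decomposition: writing $\mathbf{1}_{n+1}^T = (\mathbf{1}_n^T, 1)$ and $\rZ_{n+1}^T = (\rZ_n^T, Z_{n+1})$, and multiplying out with the block form of $(\rR_{n+1})^{-1}$, one obtains
\begin{equation*}
    M_{n+1} = \mathbf{1}_n^T \rS_n \rZ_n + \mathbf{1}_n^T \rS_{n1} Z_{n+1} + \rS_{1n} \rZ_n + s_{n+1} Z_{n+1}.
\end{equation*}
The first and third summands are $\cF_n$-measurable, so conditioning reduces everything to computing $\bE[Z_{n+1} \mid \cF_n]$. This is where the Gaussian assumption enters decisively: because $(\rZ_n, Z_{n+1})$ is jointly Gaussian with invertible covariance, the standard conditional expectation formula gives
\begin{equation*}
    \bE[Z_{n+1} \mid \cF_n] = \rR_{1n} (\rR_n)^{-1} \rZ_n.
\end{equation*}

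Substituting this in and grouping the coefficients of $\rZ_n$ yields
\begin{equation*}
    \bE[M_{n+1} \mid \cF_n] = \mathbf{1}_n^T \Big[ \rS_n + \rS_{n1} \rR_{1n} (\rR_n)^{-1} \Big] \rZ_n + \Big[ \rS_{1n} + s_{n+1} \rR_{1n} (\rR_n)^{-1} \Big] \rZ_n.
\end{equation*}
The two bracketed expressions are precisely the left-hand sides of the second line of \eqref{eq:Heuristic-id}; the first collapses to $(\rR_n)^{-1}$ and the second vanishes. What remains is $\mathbf{1}_n^T (\rR_n)^{-1} \rZ_n = M_n$, proving the martingale property.

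I do not anticipate a serious obstacle: the only non-routine step is recognising that the two block identities in \eqref{eq:Heuristic-id} are exactly what is needed to collapse the conditional expectation, and this is built into the Schur-complement style decomposition already recorded in the text. The invertibility hypothesis is used in two places — to apply the Gaussian conditional expectation formula and to define $M_n$ itself — so no additional hypothesis is needed.
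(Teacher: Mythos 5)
Your proof is correct and follows essentially the same route as the paper's: expand $M_{n+1}$ in block form, apply the Gaussian conditional expectation formula $\bE[Z_{n+1}\mid\sigma(\rZ_n)] = \rR_{1n}(\rR_n)^{-1}\rZ_n$, and collapse the coefficients via the second line of the identities in \eqref{eq:Heuristic-id}. The only (immaterial) difference is that the paper also records the filtration identification $\sigma(M_1,\ldots,M_n)=\sigma(\rZ_n)$ before conditioning, whereas you work directly with $\cF_n=\sigma(\rZ_n)$.
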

    We refer to $\big( M_n \big)_{n\in \bN}$ as \emph{the fundamental martingale}.
	
	\begin{proof}
		First consider the sequence of $\sigma$-algebras $\big( \sigma( M_n ) \big)_{n\in \bN}$. Since for each $n\in \bN$ we have the matrix $\rR_n$ is invertible, we have that $\sigma(M_n) = \sigma(\rZ_n) \subseteq \cF$. 
		
		Courtesy of classical results for conditional Gaussian measures, we have that
		\begin{equation*}
			\bE\Big[ Z_{n+1} \Big| \sigma\big( \rZ_{n} \big) \Big] = \rR_{n1} \cdot (\rR_n)^{-1} \cdot \rZ_n. 
		\end{equation*}
		Combining this with Equation \eqref{eq:Heuristic-id}, we get
		\begin{align*}
			\bE\Big[& M_{n+1} \Big| \sigma\big( M_1, ..., M_n \big) \Big]
			= \bE\Big[ \textbf{1}_n^T \cdot (\rR_{n+1})^{-1} \cdot \rZ_{n+1} \Big| \sigma\big( M_1, ..., M_n \big) \Big]
			\\
			=& \bE\bigg[ 
			\begin{pmatrix}
				\textbf{1}_n^T & 1
			\end{pmatrix}
			\cdot 
			\begin{pmatrix}
				\rS_n, & \rS_{n1}\\
				\rS_{1n}, & s_{n+1}
			\end{pmatrix}
			\cdot 
			\begin{pmatrix}
				\rZ_n \\ Z_{n+1}
			\end{pmatrix}
			\bigg| \sigma\big( \rZ_n \big) \bigg]
			\\
			=&\Big( \textbf{1}_n^T \cdot \rS_n + \rS_{1n}\Big) \cdot \rZ_n + \Big( \textbf{1}_n^T \cdot \rS_{n1} + s_{n+1} \Big) \cdot \bE\Big[  Z_{n+1} \Big| \sigma\big( \rZ_n \big) \Big]
			\\
			=& \textbf{1}_n^T \cdot (\rR_n)^{-1} \rZ_n = M_n
		\end{align*}
	\end{proof}

	In particular, this means that
	\begin{align*}
		&\bE\Big[ M_n \cdot \big( M_{n+1} - M_n \big) \Big] 
		\\
		&= \bE\bigg[ \textbf{1}_n^T\cdot (\rR_n)^{-1} \cdot \rZ_n \cdot \Big( \textbf{1}_{n+1}^T \cdot (\rR_{n+1})^{-1} \cdot \rZ_{n+1} - \textbf{1}_{n+1}^T \cdot (\rR_n)^{-1} \cdot \rZ_n \Big) \bigg]
		\\
		&= \bE\bigg[ \textbf{1}_n^T \cdot \rS_n \cdot \rZ_n \cdot \Big( \textbf{1}_n^T \cdot \rS_{n1} \cdot Z_{n+1} + \rS_{1n} \cdot \rZ_n + s_{n+1} \cdot Z_{n+1} \Big) \bigg]
		\\
		&= \textbf{1}_n^T \cdot \rS_n \cdot \Big( \rR_n \cdot \rS_{n1} +  \rR_{n1} \cdot \rS_{1n} \cdot \textbf{1}_{n} + \rR_{n1} \cdot s_{n+1} \Big) = 0. 
	\end{align*}

	As first described in \cite{Norros1999Elementary}, we can interpret $M_n$ as the (discrete time) stochastic integral of the kernel $L(n, i) = \big( \textbf{1}_n^T \cdot (\rR_n)^{-1} \big)_i$ with respect to the cumulative process $Z_n = \sum_{i=1}^n Z_i$. 
 
	\iftoggle{ArXiv}{
    Matlab code for simulating the vector $L$
    \begin{lstlisting}[style=Matlab-editor]
        n=5;    %size of matrix
        e=0.1;  %time increment
        H=0.3;  %Hurst parameter
        F=2*abs(e.*I).^(2*H) - abs(e.*I-e).^(2*H) - abs(e.*I+e).^(2*H);
        % F is the Toeplitz vector for fBm
        R = toeplitz(F);
        % covariance matrix
        S = inv(R); 
        %inverse of covariance
        O=ones(1,n); 
        %vector of 1s
        L=O*S;
        %L is the vector that represents the kernel that we integrate with respect to
    \end{lstlisting}
    }{}

    \subsubsection*{Local non-determinism and the fundamental martingale (discrete time)}

    The sequence of matrices $(\rR_n)_{n\in \bN}$ need not be invertible, but for the martingale $M_n$ to exist, we require that each inverse exists. With this in mind, we introduce the following:
    \begin{definition}
        Let $\rZ = (Z_n)_{n\in \bN}$ be a sequence of Gaussian random variables and for each $n\in \bN$ let the sumulative vector $\rZ_n = (Z_i)_{i=1,..., n}$ be an $n$-dimensional centred Gaussian vector with covariance matrix
        \begin{equation*}
            \rR_n = \bE\Big[ \rZ_n \cdot \rZ_n^T \Big] = \Big( \bE[ Z_i \cdot Z_j ] \Big)_{i, j=1, ..., n}.
        \end{equation*}
        We say that $\rZ$ is \emph{locally non-deterministic} if for every choice of $n\in \bN$ we have that
        \begin{equation}
            \label{eq:lemma:LND1}
            \bE\bigg[ \Big( Z_{n+1} - \bE\big[ Z_{n+1} \big| \sigma(\rZ_n) \big] \Big)^2 \bigg]>0
        \end{equation}
    \end{definition}
    
    \begin{lemma}
        \label{lemma:LND1}
        Let $\rZ = (Z_n)_{n\in \bN}$ be a sequence of Gaussian random variables and suppose that $\rZ$ is locally non-deterministic. Then for every choice of $n \in \bN$ we have that the matrix $\rR_n$ is invertible. 
    \end{lemma}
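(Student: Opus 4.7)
The plan is to proceed by induction on $n$, using the Schur complement formula to relate $\det(\rR_{n+1})$ to the conditional variance of $Z_{n+1}$ given $\rZ_n$ that appears in the local non-determinism condition \eqref{eq:lemma:LND1}.

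For the base case ($n=1$), the matrix $\rR_1$ is the scalar $\bE[Z_1^2]$, which is strictly positive: otherwise $Z_1 \equiv 0$ a.s., which (taking the convention that $\rZ_0$ is the empty vector and $\sigma(\rZ_0)$ is the trivial $\sigma$-algebra) would force $\bE[(Z_1 - \bE[Z_1 \mid \sigma(\rZ_0)])^2]=0$, contradicting \eqref{eq:lemma:LND1}.

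For the inductive step, suppose $\rR_n$ is invertible. Using the block decomposition
\begin{equation*}
    \rR_{n+1} = \begin{pmatrix} \rR_n & \rR_{n1} \\ \rR_{1n} & r_{n+1} \end{pmatrix}
\end{equation*}
introduced already in the excerpt, the Schur complement identity gives
\begin{equation*}
    \det(\rR_{n+1}) = \det(\rR_n) \cdot \big( r_{n+1} - \rR_{1n} \rR_n^{-1} \rR_{n1} \big).
\end{equation*}
The key step is to identify the Schur complement $r_{n+1} - \rR_{1n}\rR_n^{-1}\rR_{n1}$ with the conditional variance appearing in \eqref{eq:lemma:LND1}. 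This follows from classical Gaussian conditioning: since $\rR_n$ is invertible by the inductive hypothesis, the conditional expectation is the affine projection
\begin{equation*}
    \bE\big[ Z_{n+1} \big| \sigma(\rZ_n) \big] = \rR_{1n} \rR_n^{-1} \rZ_n,
\end{equation*}
and a direct computation of $\bE[(Z_{n+1} - \rR_{1n}\rR_n^{-1}\rZ_n)^2]$ yields precisely $r_{n+1} - \rR_{1n}\rR_n^{-1}\rR_{n1}$. Local non-determinism then gives $r_{n+1} - \rR_{1n}\rR_n^{-1}\rR_{n1} > 0$, so $\det(\rR_{n+1}) > 0$ and invertibility propagates.

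I do not expect any serious obstacle here: the argument is entirely linear algebra plus the standard Gaussian conditioning formula. The only point requiring a little care is the base case and the conventions on the indexing of $\bN$ in the local non-determinism condition; one needs that \eqref{eq:lemma:LND1} (or its $n=0$ analogue) forces $\mathrm{Var}(Z_1)>0$, which I will either extract from the $n=0$ case of the assumption or, alternatively, argue by contradiction (if $\rR_n$ were singular for some minimal $n\geq 2$, then some $Z_k$ with $k\leq n$ is a.s.\ a linear combination of $\{Z_j\}_{j<k}$, forcing the corresponding conditional variance to vanish).
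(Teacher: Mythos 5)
Your proposal is correct and follows essentially the same route as the paper: induction on $n$, with the key step being the identification of the Schur complement $r_{n+1} - \rR_{1n}\rR_n^{-1}\rR_{n1}$ with the conditional variance in \eqref{eq:lemma:LND1}, which local non-determinism makes strictly positive. The only cosmetic difference is that the paper writes down the explicit block inverse of $\rR_{n+1}$ rather than invoking the determinant formula, and your treatment of the base case is in fact more careful than the paper's, which starts the induction without comment.
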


    \begin{proof}
        We proceed via induction: suppose that the matrix $\rR_n$ is invertible and consider the matrix
        \begin{align*}
            &\rR_{n+1} = 
            \begin{pmatrix}
                \rR_n & \rR_{+n} \\
                \rR_{n+} & \rR_{++}
            \end{pmatrix}
            \quad \mbox{where}\quad
            \rR_{+n} \in \bR^{d\times 1}, \rR_{+n} \in \bR^{1\times d} \quad \mbox{and}\quad \rR_{++} \in \bR. 
        \end{align*}
        Then our primary hypothesis \eqref{eq:lemma:LND1} can be rewritten as 
        \begin{equation*}
            \lambda_n = \rR_{++} - \rR_{n+} \cdot (\rR_n)^{-1} \cdot \rR_{+n} > 0. 
        \end{equation*}
        Consider the matrix
        \begin{equation*}
            \rQ_{n+1} = \frac{1}{\lambda_n}
            \begin{pmatrix}
                \lambda_n (\rR_n)^{-1} + (\rR_n)^{-1} \cdot \rR_{+n} \cdot \rR_{n+} \cdot (\rR_n)^{-1} 
                &,& 
                -(\rR_n)^{-1} \cdot \rR_{+n}
                \\
                -\rR_{n+} \cdot (\rR_n)^{-1} 
                &,&
                1
            \end{pmatrix}. 
        \end{equation*}
        By direct calculation we obtain that, 
        \begin{equation*}
            \rR_{n+1} \cdot \rQ_{n+1} = 
            \begin{pmatrix}
                I_n &,& 0\\
                0 &,& 1
            \end{pmatrix}
            \quad \mbox{and}\quad
            \rQ_{n+1} \cdot \rR_{n+1} = 
            \begin{pmatrix}
                I_n &,& 0\\
                0 &,& 1
            \end{pmatrix}
        \end{equation*}
        so that the matrix $\rR_{n+1}$ has an explicit inverse. 
    \end{proof}

    This means that the fundamental martingale
    \begin{equation*}
        M_n = \textbf{1}_n^T \cdot (\rR_n)^{-1} \cdot \rZ_n
    \end{equation*}
    satisfies the identity
    \begin{align*}
        M_{n+1} =& \bigg[ \textbf{1}_n^T - \frac{\big( 1 - \textbf{1}_n^T \cdot (\rR_n)^{-1} \cdot \rR_{+n} \big) \cdot \rR_{n+}}{\lambda_n} \bigg] \cdot (\rR_n)^{-1} \cdot \rZ_n
        \\
        &+ \frac{\big( 1 - \textbf{1}_n^T \cdot (\rR_n)^{-1} \cdot \rR_{+n} \big)}{\lambda_n} Z_{n+1}
    \end{align*}

    \begin{example}[Fractional Brownian motion]
        \label{example:discretefBm}
        Let $\big\{ [t_{i,n}, t_{i+1, n}] : i=1, ..., n \big\}$ be a uniform partition of $[0,1]$, so that $t_{i, n} = \tfrac{i}{n}$. Let $Z_t$ be a fractional Brownian motion on $[0, 1]$ with Hurst parameter $H \in (0, 1)$. We denote the increments of the stochastic process $Z=(Z_i^n)_{i=0, ..., n}$ by $Z_0^n = 0$ and
        \begin{equation}
            Z_i^n:= Z_{t_{i, n}} - Z_{t_{i-1, n}},
        \end{equation}
        for $i \in \{1, \ldots, n\}$. Thanks to Equation \eqref{eq:covariance-fbm}, the covariance matrix
        \begin{align*}
    		\rR_n &= \bigg(\bE\Big[ Z_i^n \cdot Z_j^n \Big]\bigg)_{i, j=1, ..., n}
            \quad \mbox{where}
            \\
            \bE\Big[ Z_i^n Z_j^n \Big] &= \frac{1}{2}\Big( |t_{i, n} - t_{j - 1, n}|^{2H} + |t_{i - 1, n} - t_{j, n}|^{2H} - |t_{i, n} - t_{j, n}|^{2H} - |t_{i - 1, n} - t_{j - 1, n}|^{2H} \Big)
            \\
            &= \frac{1}{2n^{2H}} \Big(|i - j - 1|^{2H} + |i - j + 1|^{2H} - 2 |i - j|^{2H} \Big)
            =: A\big( |i-j| \big). 
        \end{align*}    
        where $A:\bN_0 \to \bR$ satisfies
        \begin{equation}
            \label{eq:example:discretefBm-Toep}
            A(k) = \frac{1}{2}\Big( |k - 1|^{2H} + |k + 1|^{2H} - 2 |k|^{2H} \Big).
        \end{equation}

        Hence the covariance matrix takes the form 
        \begin{align*}
            \rR_n = \frac{1}{n^{2H}}\rA_n = \frac{1}{n^{2H}} \Toeplitz \Big( \big\{ A(k) \big\}_{k = 0, \ldots, n-1} \Big).
        \end{align*}
        When $H<\tfrac{1}{2}$, we have that $A(k) < 0$ for all $k \geq 1$ so that
        \begin{align*}
            \sum_{k = 1}^{n - 1} \big| A(k) \big| = 
            \begin{cases}
                - \sum_{k = 1}^{n - 1} A(k)
                \quad& \mbox{when $H< \tfrac{1}{2}$}
                \\
                \sum_{k = 1}^{n - 1} A(k)
                \quad& \mbox{when $H> \tfrac{1}{2}$}
            \end{cases}
        \end{align*}
        and by a telescoping argument
        \begin{equation*}
            \sum_{k = 1}^{n - 1} \big| A(k) \big| = 
            \begin{cases}
                \tfrac{1}{2} \Big( |n|^{2H} - |n-1|^{2H} - |1|^{2H} \Big)
                \quad& \mbox{when $H< \tfrac{1}{2}$, }
                \\
                \\
                \tfrac{1}{2} \Big( |n-1|^{2H} - |n|^{2H} + |1|^{2H} \Big)
                \quad& \mbox{when $H> \tfrac{1}{2}$. }
            \end{cases}
        \end{equation*}
        When $H< \tfrac{1}{2}$, 
        \begin{align*}
            \sum_{k = 1}^{n - 1} \big| A(k) \big| \leq \frac{1}{2} < 1 = A(0)
        \end{align*}
        and the \emph{Gershgorin Circle Lemma} (see \cite{Horn2013MAtrix}) tells us that this is sufficient to ensure the invertibility of a Toeplitz matrix $\rR_n$. 
    \end{example}

    \begin{remark}
        We have shown that the discrete time fractional Brownian motion admits a fundamental martingale, but this is does not necessarily ensure that the fractional Brownian motion admits a fundamental martingale (although at this point we know that it does) . Further, it is not clear how we define the operator $( \rR_n)^{-1} $ in the continuous time limit. 
    \end{remark}

    \subsection{Volterre processes}
    \label{subsection:VolterraProcess}
    
    Let $\RKHS \subseteq \cC_T^d$ be a Hilbert space of continuous functions over $[0,T]$ and let $\BSi: \RKHS \to \cC_T^d$ be a compact linear operator. By the structure theorem for Gaussian measures \cite{bogachev1998gaussian}, we have that there exists a unique centred positive Gaussian measure with support equal to $\overline{\RKHS}$ the closure of $\RKHS$ in the supremum norm. We interchangeably refer to this Gaussian measure and the abstract Wiener space $(\cC_T^d, \cH, \BSi)$. 

    \begin{definition}
        Let $d\in \bN$ and let $(\Omega, \cF, \bF, \bP)$ be a complete filtered probability space carrying a $d$-dimensional Brownian motion. 

        Let $K:[0,T] \to L^2\big( [0,T]; \lin(\bR^d, \bR^d) \big)$ be a Volterra kernel (that is $K(t, s) = 0$ for $s>t$). We say that a stochastic process is a \emph{Gaussian Volterra process} if it can be expressed as
        \begin{equation}
    		\label{eq:GaussVolterra}
    		Z_t = \int_0^t K(t, s) dW_s
	    \end{equation}
        where $W$ is a Brownian motion and $K$ is a Volterra kernel. 
    \end{definition} 
    In particular, the covariance function for a Gaussian Volterra processes satisfies
	\begin{equation}
        \label{eq:R-covariance}
		R(s, t) = \int_0^{t \wedge s} \Big\langle K(t, u), K(s, u) \Big\rangle_{\lin(\bR^d, \bR^d)} du = \int_0^{t\wedge s} \mbox{Tr}\Big( K(t, u) \cdot K^*(s, u) \Big) du. 
	\end{equation}
    In order to illustrate the key properties of the Gaussian processes that we consider for driving signals, we need to consider three distinct Hilbert spaces:
    \begin{enumerate}[label=(\roman*)]
        \item The \emph{reproducing kernel Hilbert space} is the closure of the collection of covariance functions 
        \begin{align*}
            &\RKHS_T:=\spn\bigg\{ \bE\Big[ \langle Z_t, u\rangle_{\bR^d} Z_{\cdot} \Big]: t\in [0,T], u \in \bR^d \bigg\}
            \\
            &\mbox{with inner product} \quad 
            \Big\langle \bE\big[ \langle Z_t, u\rangle Z_{\cdot} \big], \bE\big[ \langle Z_s, v\rangle Z_{\cdot} \big] \Big\rangle_{\RKHS_T} = \bE\Big[ \langle Z_t, u\rangle  \cdot \langle Z_s, v \rangle \Big]. 
        \end{align*}
        \item The \emph{first Wiener-Ito chaos} is the closure of the collection of step functions
        \begin{align*}
            &\fWIC_T:=\spn\bigg\{ \1_{[0,t]}(\cdot) e_{i,i}: t\in [0,T], i\in \{1, ..., d\} \bigg\}
            \\
            &\mbox{with inner product}\quad
            \Big\langle \1_{[0,t]} e_{i,i}, \1_{[0,s]} e_{j,j} \Big\rangle_{\fWIC_T} = \bE\Big[ \langle Z_t, e_i\rangle \cdot \langle Z_s, e_j\rangle \Big]. 
        \end{align*}
        \item The \emph{Volterra space} is the closure of the collection of Volterra kernels
        \begin{equation*}
            \cV_T:=\spn\Big\{ K(t, \cdot): t\in [0,T] \Big\}
            \quad \mbox{with inner product}\quad
            \Big\langle K(t, \cdot), K(s, \cdot) \Big\rangle_{\cV_T} = R(t, s). 
        \end{equation*}
    \end{enumerate}
    We have the following Hilbert space isometric isomorphisms
    \begin{equation*}
        \scI: \fWIC_T \to \RKHS_T, 
        \quad
        \scJ: \cV_T \to \fWIC_T, 
        \quad 
        \scK: \cV_T \to \RKHS_T, 
    \end{equation*}
    defined by
    \begin{equation*}
        \scI \Big[ \1_{[0,t]}(\cdot) e_{i, i} \Big](s) = \bE\Big[ \langle Z_t, e_i\rangle Z_s \Big], 
        \quad
        \scJ\Big[ K(t, \cdot) \Big](s) = \1_{[0,t]}(s) I_d, 
        \quad
        \scK\Big[ K(t, \cdot) \Big](s) = \bE\Big[ \sum_{i=1}^d \langle Z_t, e_i\rangle Z_s \Big], 
    \end{equation*}
    and $\scI \circ \scJ = \scK$. Further, using Equation \eqref{eq:R-covariance} we conclude that
    \begin{equation*}
        \cV_T \subseteq L^2\big( [0,T]; \lin(\bR^d, \bR^d) \big) 
        \quad \mbox{since}\quad
        \Big\langle K(t, \cdot), K(s, \cdot) \Big\rangle_{\cV_T} = \int_0^T \Big\langle K(t, r), K(s, r)\Big\rangle dr. 
    \end{equation*}
    \begin{equation}
        \label{eq:CovarianceCommute}
        \begin{aligned}
            \begin{tikzpicture}
                \node at (0,0) {$\fWIC_T$};
                \node at (2,0) {$\RKHS_T$};
                \node at (1,-2) {$\cV_T$};
                \draw[-to](0.75,-1.75) to (0.25, -0.3);
                \draw[-to](1.25,-1.75) to (1.75, -0.3);
                \draw[-to](0.4, 0) to (1.6,0);
                \node at (0,-1) {$\scJ$};
                \node at (2,-1) {$\scK$};
                \node at (1,0.5) {$\scI$};
            \end{tikzpicture}
        \end{aligned}
    \end{equation}
    
    \subsubsection*{Isonormal Gaussian processes}
 
    Given a probability space $(\Omega, \cF, \bP)$ carrying a Gaussian process $(Z)_{t\in[0,T]}$, we define the mapping $Z: \fWIC \to L^2\big( \Omega, \bP; \bR^d \big)$ by
    \begin{equation*}
        Z\big( \1_{[0,t]} \big) = Z_t 
        \quad \mbox{so that} \quad
        \bE\Big[ \big\langle Z(g), Z(h) \big\rangle_{\bR^d} \Big] = \Big\langle g, h \Big\rangle_{\fWIC_T}
    \end{equation*}
    Then $Z$ is an \emph{isonormal Gaussian process}. Similarly, the isonormal Gaussian process of It\^o integration
    \begin{equation*}
        W: L^2\big( [0,T]; \bR^d \big) \to L^2\big( \Omega, \bP; \bR^d \big)
        \quad \mbox{defined by}\quad
        W(f) = \int_0^T f_s dW_s. 
    \end{equation*}
    Then we conclude that $\bP$-almost surely the random variables
    \begin{equation}
        \label{eq:J-formula}
        Z(h) = \int_0^T \scJ^*\big[h\big]_s dW_s. 
    \end{equation}
    In particular, $I_d \1_{[0,t]} \in \fWIC$, $\scJ^*\big[ \1_{[0,t]} I_d \big] = K(t, \cdot)$ and
    \begin{equation*}
        Z_t = Z\big( \1_{[0,t]} I_d \big) = \int_0^t K(t, s) dW_s. 
    \end{equation*}

    Further, due to Equation \eqref{eq:R-covariance} we observe that the inner product on $\cV_T$ is equivalent to the inner product on $L^2\big( [0, T]; \bR^d \big)$ so that $\cV_T \subseteq L^2\big( [0,T]; \bR^d\big)$ and the dual operator $\scJ^*: \fWIC_T \to \cV_T$ satisfies
	\begin{equation}
		\label{eq:GaussianIso}
		\Big\langle \scJ^*\big[f \big], \scJ^*\big[ g \big] \Big\rangle_{L^2([0,T])} = \Big\langle f, g \Big\rangle_{\fWIC}. 
	\end{equation}

    \subsubsection*{The fundamental martingale}

    Now, we extend our framework the continuous time setting. First of all, we introduce the following condition for the kernel of a Gaussian Volterra process:
	\begin{assumption}
		\label{assumption:VolterraK}
		Let $d\in \bN$ and let $K:[0,T] \to L^2\big( [0,T]; \lin(\bR^d, \bR^d) \big)$ be a Volterra kernel. Let the $d$-dimensional Gaussian Volterra process defined by
        \begin{equation*}
            Z_t = \int_0^t K(t, s) dW_s
            \quad \mbox{with covariance}\quad
            R(t, s) = \int_0^{t \wedge s} \Big\langle K(t, r), K(s, r) \Big\rangle_{\lin(\bR^d, \bR^d)} dr. 
        \end{equation*}
        Suppose that there exists a Volterra kernel $L:[0,T] \to \fWIC$ such that for any $s, t\in [0,T]$, 
        \begin{equation}
            \label{eq:assumption:VolterraK}
            \scJ^*\big[ L(t, \cdot) \big](s) = \1_{[0,t]}(s) I_d
        \end{equation}
        where $I_d \in \lin(\bR^d, \bR^d)$ is the identity matrix. 
	\end{assumption}
    We are interested in choices of $K$ for which the Volterra space 
    \begin{equation*}
        \cV_T \quad\mbox{is isometrically isomorphic to}\quad L^2\big([0,T];\lin(\bR^d, \bR^d) \big).
    \end{equation*}
    
    \begin{lemma}
        \label{lemma:RKHS=I^*}
        Let $(\Omega, \cF, \bF, \bP)$ be a complete filtered probability space carrying a Brownian motion, let $K:[0,T] \to L^2\big([0,T]; \lin(\bR^d, \bR^d) \big)$ be a Volterra kernel that satisfies Assumption \ref{assumption:VolterraK} and let $Z$ be a Gaussian Volterra process of the form \eqref{eq:GaussVolterra}. 

        Then for any $t\in [0,T]$ the reproducing kernel Hilbert space is isometrically isomorphic to
        \begin{equation*}
            \RKHS_t = \bigg\{ \int_0^\cdot K(\cdot, s) h_s ds: h\in L^2\big( [0,t]; \bR^d \big) \bigg\} \subseteq \cC_{0, T}^d
        \end{equation*}
    \end{lemma}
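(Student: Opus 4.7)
The plan is to exploit the isometric isomorphisms in the diagram \eqref{eq:CovarianceCommute} together with Assumption \ref{assumption:VolterraK} to produce an explicit isometric isomorphism $\mmap : L^2([0,t];\bR^d) \to \RKHS_t$ given by
\begin{equation*}
    \mmap[h](r) = \int_0^r K(r, u)\, h_u \, du,
\end{equation*}
where $h$ is extended by zero on $(t, T]$.

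As a first step, I would combine Equation \eqref{eq:GaussianIso} with Assumption \ref{assumption:VolterraK} to conclude that $\scJ^*: \fWIC_T \to L^2([0,T]; \bR^d)$ is an isometric bijection: the identity \eqref{eq:assumption:VolterraK} places every step function $\1_{[0, s]} I_d$ inside the range of $\scJ^*$, and such step functions are dense in $L^2([0, T]; \bR^d)$. Next, by direct computation on the generators $g = \1_{[0, s]} e_{i, i}$, one verifies that $\scI[g](r) = \int_0^r K(r, u) \scJ^*[g]_u \, du$, since both sides evaluate to the covariance $\bE[\langle Z_s, e_i\rangle Z_r] = \int_0^{r \wedge s} K(r, u) K(s, u)^\top e_i \, du$ via the It\^o isometry applied to the integral representation $Z(g) = \int_0^T \scJ^*[g]_u\, dW_u$ from \eqref{eq:J-formula}. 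Extending by linearity and continuity of $\scI$, $\scJ^*$ and $\mmap$, this identifies $\scI = \mmap \circ \scJ^*$ on all of $\fWIC_T$, so $\mmap: L^2([0,T]; \bR^d) \to \RKHS_T$ is itself an isometric isomorphism.

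Finally, applied to $\fWIC_t := \overline{\spn}\{\1_{[0, s]} e_{i, i} : s \leq t, i \leq d\}$, one has $\RKHS_t = \scI[\fWIC_t] = \mmap\bigl[\scJ^*[\fWIC_t]\bigr]$. Since $K(s, \cdot)$ is a Volterra kernel, each generator $K(s, \cdot) e_i$ of $\scJ^*[\fWIC_t]$ is supported on $[0, s] \subseteq [0, t]$, so $\scJ^*[\fWIC_t] \subseteq L^2([0, t]; \bR^d)$. The main obstacle is the reverse inclusion: one must show that the kernels $L(s, \cdot)$ produced by Assumption \ref{assumption:VolterraK} for $s \leq t$ actually lie in $\fWIC_t$, not merely in the larger subspace $\{g \in \fWIC_T : \scJ^*[g] \text{ is supported on } [0, t]\}$. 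For this I plan an orthogonality argument inside $\fWIC_T$: if $g$ is orthogonal to $\fWIC_t$ then $\langle g, \1_{[0, s]} e_{i, i}\rangle_{\fWIC} = \bE[Z(g) \langle Z_s, e_i\rangle] = 0$ for every $s \leq t$ and $i$, while if additionally $\scJ^*[g]$ vanishes on $(t, T]$ then $Z(g) = \int_0^t \scJ^*[g]_u \, dW_u$ is $\cF_t^W$-measurable; the filtration equality $\cF_t^W = \cF_t^Z$ that follows from the fundamental martingale representation then forces $Z(g) = 0$, and hence $g = 0$ by the $\scJ^*$-isometry. This yields $\scJ^*[\fWIC_t] = L^2([0, t]; \bR^d)$, and passing through $\mmap$ gives the desired characterization of $\RKHS_t$.
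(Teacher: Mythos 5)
Your proposal is correct and, for the core identification, follows the same route as the paper: both arguments use the isometries of diagram \eqref{eq:CovarianceCommute} to identify $\RKHS_T$ with the image of $\cV_T$ under $f \mapsto \int_0^\cdot K(\cdot, r) f_r\,dr$, and both invoke Assumption \ref{assumption:VolterraK} together with the density of step functions to conclude that $\cV_T = L^2\big([0,T];\bR^d\big)$. Where you genuinely add something is the passage from $t=T$ to general $t$: the paper's proof stops at Equation \eqref{eq:RKHS=I^*-pf2} and asserts that the conclusion follows, which implicitly requires $\overline{\spn}\big\{ K(s,\cdot)\cdot u: s\in[0,t], u\in\bR^d \big\} = L^2\big([0,t];\bR^d\big)$, i.e.\ that the kernels $L(s,\cdot)$ for $s\le t$ lie in $\fWIC_t$ and not merely in $\fWIC_T$. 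You isolate exactly this point and close it with the orthogonality and filtration argument: an element $g\perp\fWIC_t$ with $\scJ^*[g]$ supported on $[0,t]$ produces a centred Gaussian variable $Z(g)$ that is simultaneously independent of $\cF_t^Z$ (zero covariance with each $Z_s$, $s\le t$, in a jointly Gaussian family) and $\cF_t^W$-measurable; since $W_t = Z\big(L(t,\cdot)\big) = W_t^*$ by \eqref{eq:J-formula} and \eqref{eq:assumption:VolterraK}, the filtrations $\cF_t^W$ and $\cF_t^Z$ coincide, so $Z(g)=0$ and hence $g=0$ by the isometry. This is sound, and it makes the statement for general $t$ rigorous where the paper's own proof leaves the localisation step implicit.
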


    \begin{proof}
        For any element 
        \begin{align*}
            &h\in \spn\Big\{ \bE\big[ \langle Z_t, u\rangle \cdot Z \big] : t\in [0, T], u \in \bR^d \Big\} \quad \iff
            \\
            &h_t = \bE\Big[ \sum_{i\in I} a_i \langle Z_{s_i}, u_i\rangle Z_t \Big] = \int_0^T K(t, r) \cdot \bigg( \sum_{i\in I} a_i K(s_i, r)\cdot u_i \bigg) dr, 
        \end{align*}
        where $I$ is a finite set and $a_i \in \bR$, $s_i \in [0,T]$ and $u_i \in \bR^d$. Thus we have a bijection between the sets
        \begin{align*}
            &\spn\bigg\{ t \mapsto \bE\Big[ \langle Z_s, u\rangle_{\bR^d} \cdot Z_t \Big]: s \in [0,T], u \in \bR^d \bigg\} 
            \quad \mbox{and}\quad 
            \\
            &\bigg\{ t\mapsto \int_0^T K(t, r) f_r dr: \quad f \in \spn\Big\{ \langle K(s, \cdot), u\rangle: s\in [0,T], u \in \bR^d \Big\} \bigg\}. 
        \end{align*}
        Further
        \begin{align}
            \nonumber
            \bigg\langle& \sum_{i \in I} a_i \bE\Big[ \langle Z_{s_i}, u_i\rangle Z_\cdot\Big], \sum_{j\in J} b_j \bE\Big[ \langle Z_{r_j}, v_j\rangle Z_\cdot \Big] \bigg\rangle_{\cH_T} 
            \\
            &= \int_0^T \bigg\langle \sum_{i\in I} a_i K(s_i, r) \cdot u_i , \sum_{j\in J} b_j K(r_j, r)\cdot v_j \bigg\rangle_{\bR^d} dr
            \\
            \label{eq:RKHS=I^*-pf1}
            &= \bigg\langle \sum_{i \in I} a_i K(s_i, \cdot)\cdot u_i, \sum_{j\in J} b_j K(r_j, \cdot )\cdot v_j \bigg\rangle_{L^2\big( [0,T]; \bR^d \big)}. 
        \end{align}
        so that the topology induced by the inner product $\langle \cdot, \cdot \rangle_{\RKHS}$ is equivalent to the topology induced by the $L^2\big([0,T]; \bR^d \big)$ inner product on $\cV$. Hence
        \begin{align}
            \nonumber
            \RKHS_T&=\overline{\spn\Big\{ R(s, \cdot): s\in [0, T] \Big\} }^{\cH_T}
            \\
            \label{eq:RKHS=I^*-pf2}
            &=\bigg\{ t\mapsto \int_0^T K(t, r) f_r dr: \quad f \in \overline{\spn\Big\{ K(s, \cdot)\cdot u: s\in [0,T], u \in \bR^d \Big\}}^{L^2\big( [0,T]; \bR^d \big)} \bigg\}. 
        \end{align}

        Next, recall that the isometry $\scJ: \cV \to \fWIC$ defined by $\scJ\big[ K(t, \cdot) \big] = \1_{[0,t]} I_d$ is an isomorphism. Assumption \ref{assumption:VolterraK} implies that for every $t\in [0,T]$, there exists some $L(t,\cdot) \in \fWIC$ such that the adjoint operator $\scJ^*: \fWIC \to \cV$ satisfies Equation \eqref{eq:assumption:VolterraK}. In particular, this means that for every $t\in [0,T]$ and $i\in \{1, ..., d\}$, 
        \begin{equation*}
            \1_{[0,t]} e_{i, i} \in \cV_T. 
        \end{equation*}
        As the step functions form a dense subset of $L^2\big( [0,T] ; \bR \big)$, we conclude that 
        \begin{equation*}
            \overline{\spn\Big\{ K(s, \cdot)\cdot u: s\in [0,T], u \in \bR^d \Big\}}^{L^2\big( [0,T]; \bR^d \big)} = L^2\big( [0,T]; \bR^d \big)
        \end{equation*}
        and the final conclusion follows from Equation \eqref{eq:RKHS=I^*-pf2}. 
    \end{proof}
    
    \begin{example}
        \label{example:fbm}
        Recalling the Volterra kernel defined in Equation \eqref{eq:fBmKernel}, we recall the result originally proved in \cite{Decreusfond1999Stochastic} that the reproducing kernel Hilbert space for fractional Brownian motion can be written as
        \begin{equation*}
            \RKHS_T = \left\{
             \begin{aligned}
                \bigg\{ t\mapsto &\int_0^t \Big( \int_0^s u^{\tfrac{1}{2}-H} (s-u)^{H-\tfrac{3}{2}} f_u du \Big) s^{H-\tfrac{1}{2}} ds : \quad f\in L^2\big( [0,T]; \bR \big) \bigg\}, 
                &H>\tfrac{1}{2} 
                \\
                \bigg\{ t\mapsto &\Big( t^{H-\tfrac{1}{2}} \int_0^t s^{\tfrac{1}{2}-H} (t-s)^{H-\tfrac{1}{2}} f_s ds 
                \\
                &- \big( H-\tfrac{1}{2} \big) \int_0^t \Big( \int_0^s u^{\tfrac{1}{2}-H} (s-u)^{H-\tfrac{1}{2}} f_u du \Big) s^{H-\tfrac{3}{2}} ds \Big) : \quad f\in L^2\big( [0,T]; \bR \big) \bigg\},
                &H<\tfrac{1}{2} 
             \end{aligned}
             \right.
        \end{equation*}
    \end{example}

    The existence of such a Volterra kernel provided by Assumption \ref{assumption:VolterraK} introduces a new Gaussian process via the stochastic integral with respect to $Z$:
    \begin{lemma}
		\label{lemma:Fundamental-Wiener}
		Let $\big( \Omega, \cF, \bF, \bP \big)$ be a filtered probability space carrying a Brownian motion. Let $K:[0,T] \to L^2\big( [0,T]; \lin(\bR^d, \bR^d) \big)$ be a Volterra kernel that satisfies Assumption \ref{assumption:VolterraK} and let $Z$ be the Gaussian Volterra process of the form Equation \eqref{eq:GaussVolterra}. 
  
        We define
		\begin{equation}
            \label{eq:proposition:Fundamental-Wiener}
			W_t^* = \int_0^t L(t, s) dZ_s. 
		\end{equation}
		Then $W_t^*$ is a $d$-dimensional $\bF$-Brownian motion. 
	\end{lemma}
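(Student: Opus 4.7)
The plan is to identify $W^*_t$ with the original Brownian motion $W_t$ appearing in the Volterra representation of $Z$ by pushing the integral through the isonormal Gaussian process framework. Because $Z$ is not in general a semimartingale, the first order of business is to make sense of $\int_0^t L(t,s)\, dZ_s$. I would interpret this as the value $Z(L(t,\cdot))$ of the isonormal Gaussian process $Z : \fWIC_T \to L^2(\Omega, \bP; \bR^d)$ evaluated at $L(t,\cdot) \in \fWIC_T$; Assumption \ref{assumption:VolterraK} guarantees that $L(t,\cdot)$ lies in $\fWIC_T$, so this is legitimate.

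Next, I would invoke the representation formula \eqref{eq:J-formula}, which yields, $\bP$-almost surely,
\begin{equation*}
W^*_t = Z\bigl(L(t,\cdot)\bigr) = \int_0^T \scJ^*\bigl[L(t,\cdot)\bigr]_s \, dW_s.
\end{equation*}
Applying Assumption \ref{assumption:VolterraK} directly, $\scJ^*[L(t,\cdot)](s) = \1_{[0,t]}(s)\,I_d$, whence the integrand is simply the indicator of $[0,t]$ times the identity. Consequently
\begin{equation*}
W^*_t = \int_0^T \1_{[0,t]}(s)\,I_d \, dW_s = W_t,
\end{equation*}
and since $W$ is by hypothesis a $d$-dimensional $\bF$-Brownian motion, so is $W^*$.

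The main conceptual step, and the only place where one must be careful, is the justification that the ``stochastic integral'' $\int_0^t L(t,s)\, dZ_s$ genuinely coincides with $Z(L(t,\cdot))$. I would argue this by first checking the identity on elementary step functions $L(t,\cdot) = \sum_i c_i \1_{[0,t_i]} I_d$, where $\int_0^t L(t,s)\,dZ_s = \sum_i c_i Z_{t_i}$ by definition and $Z(\sum_i c_i \1_{[0,t_i]} I_d) = \sum_i c_i Z_{t_i}$ by linearity of the isonormal map; then extending by the $\fWIC_T$-isometry of $Z$, since step functions are dense in $\fWIC_T$. Once this identification is secured, the remainder of the argument is just the one-line computation above. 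No martingale-by-martingale covariance verification is needed, which is the appeal of the fundamental-martingale viewpoint.
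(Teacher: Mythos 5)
Your proof is correct, and it takes a genuinely different route from the paper's. The paper first notes that $W_t^*$ is $\cF_t$-measurable and then computes the covariance: using the isometry \eqref{eq:GaussianIso} together with Assumption \ref{assumption:VolterraK}, it finds $\bE\big[\langle W_t^*, W_s^*\rangle_{\bR^d}\big] = d\,(t\wedge s)$ and concludes that $W^*$ has the covariance structure of a $d$-dimensional Brownian motion. You instead push the identity one step further: interpreting $\int_0^t L(t,s)\,dZ_s$ as $Z\big(L(t,\cdot)\big)$ and invoking the representation \eqref{eq:J-formula} with $\scJ^*[L(t,\cdot)] = \1_{[0,t]}I_d$, you obtain the pathwise identity $W_t^* = W_t$ $\bP$-almost surely. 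Your version is stronger and arguably cleaner: the $\bF$-Brownian motion property (including independence of increments from $\cF_s$, which the paper's covariance computation alone leaves somewhat implicit, since a Gaussian process with Brownian covariance adapted to a larger filtration need not a priori be a Brownian motion for that filtration) is inherited immediately from $W$, and the claim made in the introduction that $\bF^Z = \bF^{W^*}$ becomes transparent. The paper's second-moment argument has the mild advantage of relying only on the isometry, which is the pattern reused later (e.g.\ in Theorem \ref{thm:LND-filtration}) where one must identify a law without an ambient process to equate with. Your density-of-step-functions justification for $\int_0^t L(t,s)\,dZ_s = Z\big(L(t,\cdot)\big)$ is exactly the identification the paper uses implicitly in its own covariance computation, so no gap there.
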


    \begin{proof}
        Firstly, for each $t\in [0,T]$ the random variable $W_t^*$ is $\cF_t^Z$-measurable. Further, the random variable $Z_t$ is $\cF_t$-measurable so that $W_t^*$ is also $\cF_t$-measurable. 
        
        By calculating the covariance operator, we get that for any $s, t\in [0,T]$ that
		\begin{align*}
			R(t, s) = \bE\Big[ \big\langle W_t^*, W_s^* \big\rangle_{\bR^d} \Big] =& \bE\bigg[ \Big\langle \int_0^t L(t, u) dZ_u, \int_0^s L(s, u) dZ_u \Big\rangle_{\bR^d} \bigg]
			\\
			&= \int_0^T \Big\langle \scJ^*\big[ L(t, \cdot) \big](r) , \scJ^*\big[ L(s, \cdot) \big](r) \Big\rangle_{\lin(\bR^d, \bR^d)} dr
			\\
			&= d \int_0^T \1_{[0,t]}(r) \cdot \1_{[0,s]}(r) dr = d \cdot (t\wedge s). 
		\end{align*}
		Therefore, $W^*$ has the same covariance relationship as a $d$-dimensional Brownian motion.
    \end{proof}

    \subsubsection*{Drift transformations}

    Given two Hilbert spaces $\cG$ and $\cH$, we define the set of Bilinear forms $\mbox{BiLin}(\cG, \cH)$ to be the collection of bilinear forms from $\cG$ and $\cH$ to $\bR$. 
    \begin{lemma}
        \label{lemma:abscont-Q}
        Let $K: [0,T] \to L^2\big( [0,T]; \lin(\bR^d, \bR^d) \big)$ be a Volterra kernel that satisfies Assumption \ref{assumption:VolterraK} and for every $t\in [0, T]$ let $\RKHS_t$ and $\fWIC_t$ be the associated reproducing kernel Hilbert space and the first Wiener-Ito chaos. 

        We denote the functional $\Lambda:[0,T] \to \mbox{BiLin}(\RKHS_T, \fWIC_T)$ defined by
        \begin{equation*}
            \Lambda_t\big[ \phi, \psi \big]:=\int_0^t \Big\langle \scJ^*\big[ \phi \big](s), \scK^{*}\big[ \psi \big](s) \Big\rangle_{\bR^d} ds. 
        \end{equation*}
        For every $t\in [0,T]$, we define $\Lambda_t\big[ L(t, \cdot), \cdot \big]: \RKHS_T \to \bR$. Then for every $\psi \in \RKHS_T$, 
        \begin{equation}
            t \mapsto \Lambda_t\Big[ L(t, \cdot), \psi \Big]
        \end{equation}
        is absolutely continuous. 
    \end{lemma}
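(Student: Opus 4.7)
The plan is to exploit Assumption \ref{assumption:VolterraK} to eliminate the $t$-dependence in the integrand $\scJ^*[L(t,\cdot)]$, after which the quantity $\Lambda_t[L(t,\cdot),\psi]$ reduces to an indefinite integral of an $L^1$ function. The absolute continuity then follows from the classical characterisation of indefinite Lebesgue integrals.

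More explicitly, the first step is to plug in \eqref{eq:assumption:VolterraK} into the defining formula to obtain
\begin{equation*}
    \Lambda_t\big[L(t,\cdot),\psi\big] = \int_0^t \Big\langle \1_{[0,t]}(s)\, I_d\,,\, \scK^*[\psi](s)\Big\rangle_{\bR^d}\, ds = \int_0^t \Big\langle I_d\,,\, \scK^*[\psi](s)\Big\rangle_{\bR^d}\, ds,
\end{equation*}
where the second equality uses that $\1_{[0,t]}(s)=1$ throughout the integration domain $[0,t]$. The crucial consequence is that the time parameter $t$ now appears \emph{only} in the upper limit of integration; the integrand $f_\psi(s) := \langle I_d,\scK^*[\psi](s)\rangle_{\bR^d}$ is a fixed scalar function of $s$ alone.

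Next I would verify that $f_\psi \in L^1([0,T];\bR)$. Since $\scK:\cV_T \to \RKHS_T$ is an isometric isomorphism, so is its adjoint $\scK^*$, and hence $\scK^*[\psi] \in \cV_T$. The observation made just below the definition of $\cV_T$ (relying on \eqref{eq:R-covariance}) gives the continuous inclusion $\cV_T \subseteq L^2([0,T];\lin(\bR^d,\bR^d))$, so $\scK^*[\psi] \in L^2([0,T];\lin(\bR^d,\bR^d))$. Applying Cauchy--Schwarz (with the constant matrix $I_d$) and the boundedness of $[0,T]$ gives $f_\psi \in L^2([0,T];\bR) \subseteq L^1([0,T];\bR)$.

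Finally, having written $t \mapsto \Lambda_t[L(t,\cdot),\psi] = \int_0^t f_\psi(s)\, ds$ for an $L^1$ integrand, the claim is the classical fact that the indefinite Lebesgue integral of an $L^1$ function is absolutely continuous. There is no serious obstacle here: the only conceptual step is recognising that Assumption \ref{assumption:VolterraK} collapses the joint dependence on $t$ in $\scJ^*[L(t,\cdot)](s)$ and the upper endpoint into a single upper-endpoint dependence, and the rest is a routine $L^2 \hookrightarrow L^1$ embedding on a bounded interval.
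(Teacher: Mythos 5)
Your proposal is correct and follows essentially the same route as the paper's proof: both use Assumption \ref{assumption:VolterraK} to replace $\scJ^*[L(t,\cdot)](s)$ by $\1_{[0,t]}(s)I_d$, identify $\scK^*[\psi]$ with a fixed square-integrable function (the paper does this via Lemma \ref{lemma:RKHS=I^*}, writing $\scK^*[\psi](s)=h_s$ with $h\in L^2([0,T];\bR^d)$, while you invoke the isometry $\scK$ and the inclusion $\cV_T\subseteq L^2$), and then conclude from the absolute continuity of the indefinite Lebesgue integral. The only difference is that you spell out the $L^2\hookrightarrow L^1$ step explicitly, which the paper leaves implicit.
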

    
    \begin{proof}
        Let $\psi \in \RKHS_T$. Then thanks to Lemma \ref{lemma:RKHS=I^*}, there exists some $h \in L^2\big([0,T]; \bR^d \big)$ such that
        \begin{equation*}
            \psi_t = \int_0^t K(t, s) h_s ds. 
        \end{equation*}
        Then $\scJ^*\big[ L(t, \cdot) \big](s) = \1_{[0,t]}(s)$ and $\scK^*\big[ \psi \big](s) = h_s$ so that
        \begin{equation}
            \label{eq:lemma:abscont-Q-pf1}
            t\mapsto \Lambda_t\big[ L(t, \cdot) , \psi \big] = \int_0^t h_s ds. 
        \end{equation}
        Therefore, by construction this map is absolutely continuous. 
    \end{proof}
    
    Having established that the existence of a Volterra kernel that transforms our Gaussian Volterra process back to a Brownian motion, our next result explores how elements of the Reproducing Kernel Hilbert space can also be transformed bijectively to $L^2\big( [0,T]; \bR^d \big)$:
    \begin{proposition}
        \label{prop:Existence_Q}
        Let $K:[0,T] \to L^2\big( [0,T]; \lin(\bR^d, \bR^d) \big)$ be a Volterra kernel that satisfies Assumption \ref{assumption:VolterraK} and for every $t\in [0,T]$ let $\RKHS_t$ be the associated reproducing kernel Hilbert space. 
    
        Let $b:[0,T] \to \bR^d$ and suppose for any choice of $t\in [0,T]$ that
        \begin{equation}
            \label{eq:lemma:Existence_Q}
            \Big\| \int_0^\cdot b_s ds \Big\|_{\cH_t}< \infty. 
        \end{equation}
        Let $L:[0,T] \to \fWIC$ be the Volterra kernel from Assumption \ref{assumption:VolterraK}. Then the function 
        \begin{equation*}
            t\mapsto \int_0^t L(t, s) b_s ds 
        \end{equation*}
        is contained in $W^{1, 2}\big( [0,T]; \bR^d \big)$ the Sobolev space of functions with square integrable derivatives. We define $Q^{b} \in L^2\big( [0,T]; \bR^d \big)$ by
        \begin{equation}
            \label{eq:def-Q}
            Q_t^{b}:= \frac{d}{dt} \int_0^t L(t, s) b_s ds
            \quad \mbox{and}\quad
            \int_0^t Q_s^{b} ds = \int_0^t L(t, s) b_s ds. 
        \end{equation}
        Then
        \begin{equation*}
            \int_0^t b_s ds = \int_0^t K(t, s) Q_s^b ds
            \quad \mbox{and}\quad 
            \Big\| \int_0^\cdot b_s ds \Big\|_{\RKHS_t} = \bigg( \int_0^t \big| Q_s^b \big|^2 ds \bigg)^{\tfrac{1}{2}}
        \end{equation*}
    \end{proposition}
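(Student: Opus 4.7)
The plan is to use Lemma \ref{lemma:RKHS=I^*} to extract the $L^2$-density associated to the $\RKHS$-valued path $t \mapsto \int_0^t b_s\,ds$, and to identify $Q^b$ with that density. Since $\big\| \int_0^\cdot b_s\,ds \big\|_{\RKHS_T} < \infty$, Lemma \ref{lemma:RKHS=I^*} (together with the isometry relation derived in \eqref{eq:RKHS=I^*-pf1}) produces a unique $h \in L^2([0,T]; \bR^d)$ such that
\[
\int_0^t b_s\,ds \;=\; \int_0^t K(t,s) h_s\,ds \qquad \forall\, t \in [0,T],
\]
and moreover $\big\|\int_0^\cdot b_s\,ds\big\|_{\RKHS_t} = \|h\|_{L^2([0,t];\bR^d)}$ for each $t$ (applying Lemma \ref{lemma:RKHS=I^*} with $T$ replaced by $t$, using that $K(t',\cdot)$ is supported in $[0,t']$).

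Next I apply Lemma \ref{lemma:abscont-Q} with $\psi = \int_0^\cdot b_s\,ds \in \RKHS_T$. Because $\psi$ is represented through the density $h$, we have $\scK^*[\psi](s) = h_s$; combining this with the identity $\scJ^*[L(t,\cdot)](s) = \1_{[0,t]}(s) I_d$ from Assumption \ref{assumption:VolterraK} gives
\[
\Lambda_t\big[ L(t, \cdot), \psi \big] \;=\; \int_0^t \big\langle \1_{[0,t]}(s) I_d,\, h_s \big\rangle_{\bR^d}\,ds \;=\; \int_0^t h_s\,ds.
\]
This map is absolutely continuous on $[0,T]$ with weak derivative $h_t \in L^2$, hence lies in $W^{1,2}([0,T]; \bR^d)$.

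The remaining step is to identify the convolution $\int_0^t L(t,s) b_s\,ds$ --- which in the fractional Brownian motion case \eqref{eq:fbm_Volterra} is a classical integral, but for an abstract $L(t,\cdot) \in \fWIC_T$ requires interpretation --- with the bilinear pairing $\Lambda_t\big[ L(t,\cdot), \int_0^\cdot b_s\,ds \big]$. This is the natural extension: under $\scJ^*$ the kernel $L(t,\cdot)$ is sent to $\1_{[0,t]} I_d$, while under $\scK^*$ the antiderivative of $b$ is sent to $h$, and the $L^2$-pairing of these two images recovers precisely the intended integral. Under this identification $Q^b_t = h_t$, and the two stated identities are immediate:
\[
\int_0^t K(t,s) Q^b_s\,ds = \int_0^t K(t,s) h_s\,ds = \int_0^t b_s\,ds,
\qquad
\int_0^t \big| Q^b_s \big|^2\,ds = \|h\|_{L^2([0,t];\bR^d)}^2 = \Big\| \int_0^\cdot b_s\,ds \Big\|_{\RKHS_t}^2.
\]

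The main obstacle is conceptual rather than computational: justifying that the pairing $\Lambda_t[L(t,\cdot), \psi]$ is indeed the correct meaning of the symbolic convolution $\int_0^t L(t,s) b_s\,ds$ when $L(t,\cdot)$ is only an abstract element of $\fWIC_T$. Lemma \ref{lemma:abscont-Q} supplies the precise bridge and delivers the absolute continuity needed to define $Q^b$ pointwise almost everywhere; once that identification is in hand, the remainder of the proposition is a direct application of the Hilbert space isomorphisms summarised in diagram \eqref{eq:CovarianceCommute}.
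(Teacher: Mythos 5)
Your proposal is correct and follows essentially the same route as the paper: extract the $L^2$-density $h$ via Lemma \ref{lemma:RKHS=I^*}, use Lemma \ref{lemma:abscont-Q} to get absolute continuity of $t \mapsto \Lambda_t[L(t,\cdot), \int_0^\cdot b_s\,ds] = \int_0^t h_s\,ds$, and identify $Q^b = h$. The one step you flag as an obstacle --- identifying the pairing $\Lambda_t[L(t,\cdot), \cdot]$ with the symbolic convolution $\int_0^t L(t,s) b_s\,ds$ --- is handled in the paper exactly as you suggest, by computing $\Lambda_T$ explicitly on step functions $\sum_i a_i \1_{[t_i,t_{i+1}]} I_d$ (where the pairing visibly equals the classical integral against $b$) and then passing to the limit in $\fWIC_t$.
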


    \begin{proof}
        Firstly thanks to Lemma \ref{lemma:RKHS=I^*}, for any 
        \begin{equation*}
            h \in L^2\big( [0,T]; \bR^d \big) \quad \iff \quad \int_0^\cdot K(\cdot, s) h_s ds \in \RKHS_T
        \end{equation*}
        and by construction
        \begin{equation*}
            \sum_{i=1}^n a_i \1_{[t_i, t_{i+1}]} I_d \in \fWIC_T
        \end{equation*}
        so that
        \begin{equation}
            \label{eq:prop:Existence_Q:pf1}
            \Lambda_T\Big[ \sum_{i=1}^n a_i \1_{[t_i, t_{i+1}]} I_d, \int_0^{\cdot} K(\cdot, s) h_s ds \Big] = \sum_{i=1}^n a_i \bigg( \int_0^{t_{i+1}} K(t_{i+1}, s) h_s ds - \int_0^{t_i} K(t_i, s) h_s ds \bigg). 
        \end{equation}
        Assumption \ref{assumption:VolterraK} implies that 
        \begin{align*}
            &\Big\| \int_0^\cdot b_s ds \Big\|_{\RKHS_t}< \infty
            \quad \iff \quad
            \int_0^t b_s ds = \int_0^t K(t, s) h_s ds
            \quad \mbox{for some $h \in L^2\big( [0,T]; \bR^d \big)$ }
            \\
            &\mbox{and} \quad
            \Big\| \int_0^\cdot b_s ds \Big\|_{\RKHS_t}^2 = \int_0^t \big| h_s \big|^2 ds. 
        \end{align*}
        Therefore, our goal is to show that for $t$-Lebesgue almost everywhere that $h_t = Q_t^b$ as defined in Equation \eqref{eq:def-Q}. 
        
        More specifically, Equation \eqref{eq:prop:Existence_Q:pf1} means that
        \begin{align*}
            \Lambda_T \Big[ \sum_{i=1}^n a_i \1_{[t_i, t_{i+1}]} I_d, \int_0^{\cdot} b_s ds \Big] =& \sum_{i=1}^n a_i \int_{t_i}^{t_{i+1}} b_s ds
            =\int_0^T \Big( \sum_{i=1}^n a_i \1_{[t_i, t_{i+1}]} I_d \Big) b_s ds 
        \end{align*}
        and more generally that for every $t\in [0, T]$, 
        \begin{equation*}
            \Lambda_t\Big[ \sum_{i=1}^n a_i \1_{[t_i, t_{i+1}]} I_d, \int_0^{\cdot} b_s ds \Big] = \int_0^t \Big( \sum_{i=1}^n a_i \1_{[t_i, t_{i+1}]}(s) \Big) b_s ds. 
        \end{equation*}
        Taking appropriate limits on $\fWIC_t$, we conclude that
        \begin{equation*}
            \Lambda_t\Big[ L(t, \cdot), \int_0^{\cdot} b_s ds \Big] = \int_0^t L(t, s) b_s ds
        \end{equation*}
        and from Lemma \ref{lemma:abscont-Q} we know that
        \begin{equation*}
            t \mapsto \int_0^t L(t, s) b_s ds \quad \mbox{is absolutely continuous. }
        \end{equation*}
        Therefore, we define $Q^b:[0,T] \to \bR^d$ according to Equation \eqref{eq:def-Q} and observe that
        \begin{equation*}
            \int_0^t Q_s^b ds = \Lambda_t \Big[ L(t, \cdot), \int_0^\cdot b_s ds \Big]. 
        \end{equation*}
        Finally, thanks to Equation \eqref{eq:lemma:abscont-Q-pf1}, we observe that
        \begin{equation*}
            \Lambda_t \Big[ L(t, \cdot), \int_0^\cdot b_s ds \Big] = \int_0^t h_s ds
        \end{equation*}
        where $h \in L^2 \big([0,T]; \bR^d \big)$ was defined above. This leads us to our conclusion. 
    \end{proof}
    With Proposition \ref{prop:Existence_Q} established, we now provide a slightly more general definition for the element $Q^b$:
    \begin{definition}
        \label{definition:Q}
		Let $\big( \Omega, \cF, \bF, \bP \big)$ be a complete filtered probability space supporting an $\bF$-Brownian motion $W$. Let $K:[0,T] \to L^2\big( [0,T]; \lin(\bR^d, \bR^d) \big)$ be a Volterra kernel that satisfies Assumption \ref{assumption:VolterraK}. 
		
		Let $b:[0,T] \times \Omega \to \bR^d$ be progressively measurable and suppose that
        \begin{equation*}
            \bigg\| \int_0^{\cdot} b_s ds \bigg\|_{\RKHS_T} < \infty \quad \bP\mbox{-almost surely. }
        \end{equation*}

		Then we define $Q^b: [0,T] \times \Omega \to \bR^d$ by
		\begin{equation}
			\label{Q}
			Q^b_t := \frac{d}{dt} \int_0^t L(t, s) b_s ds. 
		\end{equation}
	\end{definition}
    
    \subsubsection*{Hilbert space projections}
    
    Assumption \ref{assumption:VolterraK} also allows us to define a project from the Hilbert space $\RKHS_T$ onto $\RKHS_t$ for any $t\in [0,T]$. 
    \begin{definition}
        \label{definition:RKHSprojection}
        Let $K:[0,T] \to L^2\big( [0,T]; \bR^d \big)$ be a Volterra kernel that satisfies Assumption \ref{assumption:VolterraK}. 

        Then for any $t\in [0,T]$, we define $\Pi_t: \RKHS_T \to \RKHS_t$ for $\dot{h}\in L^2\big([0,T]; \bR^d \big)$ by
        \begin{equation*}
            \Pi_t\Big[ \int_0^\cdot K(\cdot, r) \dot{h}_r dr \Big](s) = \int_0^{s} K(s, r) \1_{[0,t]}(r) \dot{h}_r dr
        \end{equation*}
    \end{definition}
    Such a projection exists because by Assumption we necessarily have that $\1_{[0,t]} \in \cV_T$ for any choice of $t\in [0,T]$. 
    \begin{remark}
    	It is worth emphasising here that for every choice of $t\in [0, T]$, the reproducing kernel Hilbert space $\RKHS_t \subseteq \cC_{0, T}^d$. In words, while $\RKHS_t$ is a smaller Hilbert space than $\RKHS_T$, each element of $\RKHS_t$ remains a function defined on the interval $[0,T]$. When the Gaussian Volterra process is chosen (trivially) to be Brownian motion, this point is lost because
	\begin{equation*}
	    \RKHS_t = \Big\{ \int_0^\cdot h_r \1_{[0,t]}(r) dr : h  \in L^{2}\big( [0,T]; \bR^d \big) \Big\}
	\end{equation*}
	so that any element of the reproducing kernel Hilbert space is constant for any value $s\in [t, T]$. This is emphatically not the case for other Gaussian Volterra processes. 
    \end{remark}
    
    These next results, which build on some of the techniques first used in \cite{Decreusfond1999Stochastic}, allow us to link the projection operator that arises naturally from our Volterra kernel satisfying Assumption \ref{assumption:VolterraK} with conditional expectations which provide a natural sense of projection on filtered probability spaces:
    \begin{lemma}        
        Let $(\Omega, \cF, \bF, \bP)$ be a filtered probability space carrying a Brownian motion. Let $K:[0,T] \to L^2\big( [0,T]; \lin(\bR^d, \bR^d) \big)$ be a Volterra kernel that satisfies Assumption \ref{assumption:VolterraK} and let $(Z_t)_{t\in [0, T]}$ be a Gaussian process of the form \eqref{eq:GaussVolterra}. Then
        \begin{equation*}
            \bE\bigg[ \exp\Big( \delta\big(h) - \tfrac{\|h\|_{\RKHS_T}^2}{2} \Big) \bigg| \cF_t^Z \bigg] = \exp\Big( \delta\big(\Pi_t[h]) - \tfrac{\|h\|_{\RKHS_t}^2}{2} \Big)
        \end{equation*}
        where $\delta$ is the Malliavin divergence and $\bF = (\cF_t^Z)_{t\in [0,T]}$ is the filtration generated by the Gaussian Volterra process \eqref{eq:GaussVolterra}. 
    \end{lemma}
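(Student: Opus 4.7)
The plan is to reduce everything to a Wiener-integral computation with respect to the underlying Brownian motion $W$, and then split the Doléans-Dade exponential at time $t$.

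First, by Lemma \ref{lemma:RKHS=I^*} there is a unique $\dot h \in L^2([0,T]; \bR^d)$ with $h(\cdot) = \int_0^\cdot K(\cdot, s) \dot h_s \, ds$ and $\|h\|_{\RKHS_T}^2 = \int_0^T |\dot h_s|^2 \, ds$. For deterministic $h \in \RKHS_T$ the Malliavin divergence coincides with the first-chaos Gaussian variable associated to $h$, and via Equation \eqref{eq:J-formula} I would identify $\delta(h) = \int_0^T \dot h_s \, dW_s$. The same reasoning applied to Definition \ref{definition:RKHSprojection} gives $\Pi_t[h](\cdot) = \int_0^\cdot K(\cdot, s)\,\1_{[0,t]}(s) \dot h_s \, ds$, so that $\delta(\Pi_t[h]) = \int_0^t \dot h_s \, dW_s$ and $\|h\|_{\RKHS_t}^2 = \int_0^t |\dot h_s|^2 \, ds$.

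The next key step is to show that the conditioning filtration satisfies $\cF_t^Z = \cF_t^W$. The inclusion $\cF_t^Z \subseteq \cF_t^W$ is immediate from the Volterra representation $Z_s = \int_0^s K(s, r)\, dW_r$. For the reverse inclusion I would combine Lemma \ref{lemma:Fundamental-Wiener} with Equation \eqref{eq:J-formula} applied to $L(t,\cdot) \in \fWIC$: Assumption \ref{assumption:VolterraK} gives $\scJ^*[L(t,\cdot)](s) = \1_{[0,t]}(s) I_d$, so
\begin{equation*}
W_t^* \;=\; \int_0^t L(t,s)\, dZ_s \;=\; \int_0^T \scJ^*[L(t,\cdot)]_s\, dW_s \;=\; W_t,
\end{equation*}
which exhibits $W_t$ as an $\cF_t^Z$-measurable random variable.

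With the filtration identification in hand, the argument becomes standard. Factor the Doléans-Dade exponential as
\begin{equation*}
\exp\Big( \delta(h) - \tfrac{1}{2}\|h\|_{\RKHS_T}^2 \Big) \;=\; \exp\Big( \int_0^t \dot h_s\, dW_s - \tfrac{1}{2}\int_0^t |\dot h_s|^2\, ds \Big)\cdot \exp\Big( \int_t^T \dot h_s\, dW_s - \tfrac{1}{2}\int_t^T |\dot h_s|^2\, ds \Big).
\end{equation*}
The first factor is $\cF_t^W$-measurable and equals $\exp\bigl(\delta(\Pi_t[h]) - \tfrac{1}{2}\|h\|_{\RKHS_t}^2\bigr)$. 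The second factor is the exponential of a Wiener integral of a deterministic integrand on $[t,T]$, hence independent of $\cF_t^W$ with expectation one. Taking conditional expectation with respect to $\cF_t^Z = \cF_t^W$ yields the stated identity. The main obstacle is establishing the filtration equality $\cF_t^Z = \cF_t^W$; everything else is bookkeeping on the Cameron-Martin isometry and the diagram \eqref{eq:CovarianceCommute}, but the filtration equality is what leverages Assumption \ref{assumption:VolterraK} in an essential way.
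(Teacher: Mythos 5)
Your proposal is correct, but it proves the identity by a genuinely different route than the paper. The paper never passes to the Brownian filtration: it tests the claimed identity against smooth cylinder functionals $F = \hat f(\delta(h_{i_1}),\dots,\delta(h_{i_m}))$ built from an orthogonal basis of $\RKHS_t$, applies the Cameron--Martin shift (Theorem \ref{theorem:Cameron-Martin}), uses the orthogonality $\langle h_{i_j}, h\rangle_{\RKHS_T} = \langle h_{i_j}, \Pi_t[h]\rangle_{\RKHS_T}$ to replace $h$ by $\Pi_t[h]$ inside the shifted functional, and concludes by density of such $F$ in $L^2(\cF_t^Z)$. You instead identify $\delta(h) = \int_0^T \dot h_s\,dW_s$, establish $\cF_t^Z = \cF_t^W$ via the pathwise identity $W_t^* = Z(L(t,\cdot)) = \int_0^T \scJ^*[L(t,\cdot)]_s\,dW_s = W_t$, and then split the exponential at time $t$ using independence of Brownian increments. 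Your identification $W^* = W$ is in fact a sharper statement than Lemma \ref{lemma:Fundamental-Wiener} (which only computes the covariance of $W^*$), and it does follow from Equation \eqref{eq:J-formula}; it makes the whole argument more transparent. Two points deserve explicit care in your write-up. First, the inclusion $\cF_t^W \subseteq \cF_t^Z$ requires not merely $L(t,\cdot)\in\fWIC_T$ but $L(t,\cdot)\in\fWIC_t$, so that $Z(L(t,\cdot))$ is an $L^2$-limit of linear combinations of $Z_s$ with $s\le t$; this is supplied by Proposition \ref{prop:LND-filtration}, and you should cite it rather than Lemma \ref{lemma:Fundamental-Wiener} alone. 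Second, the equality of filtrations holds only up to $\bP$-null sets (the fundamental martingale is an $L^2$-limit), which is harmless for conditional expectations but worth stating. The trade-off between the two approaches: the paper's duality argument only needs the weaker fact that $\cF_t^Z = \sigma(\delta(g): g\in\RKHS_t)\otimes\cF_0^Z$ together with the self-adjointness of $\Pi_t$, whereas your argument buys a shorter, more classical proof at the cost of invoking the full filtration identification, which is itself the deeper consequence of Assumption \ref{assumption:VolterraK}.
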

    
    \begin{proof}
        Let $F$ be a smooth random variable that is $\cF_t^Z$-measurable, so that it can be expressed as 
        \begin{equation*}
            F = f\Big( Z_{t_{i_1}}, ..., Z_{t_{i_m}} \Big)
        \end{equation*}
        for some continuously differentiable function $f:\bR^m \to \bR$ that has at most polynomial growth and $t_{i_1}, ..., t_{i_m} \in [0,t]$. As we have that the random variables $Z_{t_{i_j}} = \delta\big( K(t_{i_j}, \cdot) \big)$ (where $\delta$ is the Malliavin divergence), we obtain that
        \begin{equation}
            \label{eq:sigma-alg1}
            \cF_t^Z = \sigma\Big( \delta\big( K(s, \cdot) \big): s\in [0,t] \Big) \otimes \cF_0^Z
            = \sigma\Big( \delta\big( h \big): h\in \RKHS_t \Big) \otimes \cF_0^Z. 
        \end{equation}
        Let $(h_n)_{n\in \bN}$ is an orthogonal basis of $\RKHS_t$. Equation \eqref{eq:sigma-alg1} means that any smooth random variable can be expressed as
        \begin{equation*}
            F = \hat{f}\Big( \delta(h_{i_1}), ..., \delta(h_{i_m}) \Big)
        \end{equation*}
        where $\hat{f}:\bR^m \to \bR$ is continuously differentiable that has at most polynomial growth. 
        
        Let $h\in \RKHS_T$. Applying the Cameron-Martin Theorem and using that
        \begin{equation*}
            \langle h_{i_j}, h \rangle_{\RKHS_T} = \langle h_{i_j}, \Pi_t[h] \rangle_{\RKHS_T} = \langle h_{i_j}, \Pi_t[h] \rangle_{\RKHS_t}, 
        \end{equation*}
        we obtain
        \begin{align*}
            \bE\bigg[ F \exp\Big( \delta(h) - \tfrac{\|h\|_{\RKHS_T}^2}{2} \Big) \bigg] =& \bE\bigg[ \hat{f}\big( \delta(h_{i_1}), ..., \delta(h_{i_m}) \big) \exp\Big( \delta(h) - \tfrac{\|h\|_{\RKHS_T}^2}{2} \Big) \bigg]
            \\
            =& \bE\bigg[ \hat{f}\Big( \delta(h_{i_1}) + \langle h_{i_1}, h \rangle_{\RKHS_T}, ..., \delta(h_{i_m}) + \langle h_{i_m}, h \rangle_{\RKHS_T} \Big) \bigg]
            \\
            =& \bE\bigg[ \hat{f}\Big( \delta(h_{i_1}) + \langle h_{i_1}, \Pi_t[h] \rangle_{\RKHS_T}, ..., \delta(h_{i_m}) + \langle h_{i_m}, \Pi_t[h] \rangle_{\RKHS_T} \Big) \bigg]
            \\
            =& \bE\bigg[ F \exp\Big( \delta\big( \Pi_t[h] \big) - \tfrac{\|h\|_{\RKHS_t}^2}{2} \Big) \bigg]. 
        \end{align*}
        Therefore
        \begin{equation*}
            \bE\bigg[ \bE\Big[ \exp\Big( \delta(h) - \tfrac{\|h\|_{\RKHS_T}^2}{2} \Big) \Big| \cF_t^Z \Big] F \bigg] = \bE\Big[ \exp\Big( \delta\big( \Pi_t[h] \big) - \tfrac{\|h\|_{\RKHS_t}^2}{2} \Big) F \Big]
        \end{equation*}
        and by density of smooth random variables that are $\cF_t^Z$-measurable in the space of square integrable random variables that $\cF_t^Z$-measurable, we obtain that
        \begin{equation*}
            \bE\bigg[ \exp\Big( \delta(h) - \tfrac{\|h\|_{\RKHS_T}^2}{2} \Big) \bigg| \cF_t^Z \bigg] = \exp\Big( \delta\big( \Pi_t[h] \big) - \tfrac{\|h\|_{\RKHS_t}^2}{2} \Big). 
        \end{equation*}
    \end{proof}
    
    \begin{lemma}
        \label{lemma:adaptedness=Mall}
        Let $(\Omega, \cF, \bF, \bP)$ be a filtered probability space carrying a Brownian motion. Let $K:[0,T] \to L^2\big( [0,T]; \lin(\bR^d, \bR^d) \big)$ be a Volterra kernel that satisfies Assumption \ref{assumption:VolterraK} and let $(Z_t)_{t\in [0, T]}$ be a Gaussian process of the form \eqref{eq:GaussVolterra}. 
        
        Let $F:\Omega\to \bR$ such that $F$ is Malliavin differentiable with Malliavin derivative
        \begin{equation*}
            DF \in L^2\big( \Omega, \bP; \RKHS_T \big). 
        \end{equation*}
        Then $F$ is $\cF_t^Z$-measurable if and only if $DF = \Pi_t\big[ DF \big]$. 
    \end{lemma}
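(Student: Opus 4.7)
The proof of both directions rests on the representation $\cF_t^Z = \sigma\big(\delta(h) : h \in \RKHS_t\big) \otimes \cF_0^Z$ established in the preceding lemma, together with the observation that $\Pi_t$ is the orthogonal projection from $\RKHS_T$ onto $\RKHS_t$; this is immediate from its definition, as transporting through the isometry $\scK$ realises $\Pi_t$ as multiplication by $\1_{[0,t]}$ on $\cV_T \simeq L^2([0,T]; \bR^d)$.

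For the forward implication, I would first verify the claim on smooth cylindrical random variables $F = f\big(\delta(h_1), \ldots, \delta(h_m)\big)$ with $h_1, \ldots, h_m \in \RKHS_t$ and $f$ smooth with polynomial growth. By the $\sigma$-algebra identity above, such $F$ form a dense subspace of $L^2(\cF_t^Z)$. For these, the definition of the Malliavin derivative gives
\begin{equation*}
DF = \sum_{i=1}^m \partial_i f\big(\delta(h_1), \ldots, \delta(h_m)\big)\, h_i,
\end{equation*}
which lies pathwise in $\RKHS_t$, so trivially $\Pi_t[DF] = DF$. I would then extend to arbitrary Malliavin differentiable $F \in L^2(\cF_t^Z)$ by approximating $F$ in the domain of $D$ by smooth cylindrical $F_n$ with $\Pi_t[DF_n] = DF_n$ and passing to the limit using closability of $D$ and boundedness of $\Pi_t$.

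For the backward implication, I would use the Wiener chaos decomposition $F = \sum_n I_n(f_n)$ with symmetric kernels $f_n \in \RKHS_T^{\otimes n}$. The Malliavin derivative then admits the chaos expression $DF = \sum_n n\, I_{n-1}\big(f_n(\,\cdot\,;\,\star)\big)$, where the last slot $\star \in \RKHS_T$ is reserved as the $\RKHS_T$-valued output. By orthogonality of chaos spaces, the hypothesis $DF = \Pi_t[DF]$ holds level-by-level, and via the chaos isometry this translates to $\big(\mathrm{id}^{\otimes(n-1)} \otimes \Pi_t\big) f_n = f_n$, i.e.\ $f_n$ takes values in $\RKHS_t$ in its reserved slot. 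Symmetry of $f_n$ then upgrades this to $f_n \in \RKHS_t^{\otimes n}$ for every $n$, so $F$ is an $L^2$-limit of multiple Wiener integrals against kernels in $\RKHS_t$, each of which is $\cF_t^Z$-measurable by the preceding lemma.

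The main subtlety is the chaos-level identification: matching the Hilbert-space projection $\Pi_t$ acting on $L^2(\Omega; \RKHS_T)$ with slot-wise projection on the symmetric kernels, and promoting a single-slot projection to a full tensor-projection via symmetry of $f_n$. Both are standard consequences of the chaos isometry structure but require careful bookkeeping. A chaos-free alternative would be to test $F - \bE[F \mid \cF_t^Z]$ against the exponential martingales $\exp\big(\delta(h) - \tfrac{1}{2}\|h\|_{\RKHS_T}^2\big)$ for $h \in \RKHS_T$, using the preceding lemma together with Malliavin integration by parts; the hypothesis $DF = \Pi_t[DF]$ makes the pairing vanish whenever $h \perp \RKHS_t$, forcing the desired measurability.
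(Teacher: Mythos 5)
Your forward implication is essentially the paper's argument: both verify the identity on smooth cylindrical functionals of $\delta(h_1),\dots,\delta(h_m)$ with $h_i\in\RKHS_t$ and pass to the limit, the paper making the approximating sequence explicit as $F_n=\bE[F\mid\sigma(Z(h_1),\dots,Z(h_n))]$ for an orthogonal basis of $\fWIC_t$ (which is the cleanest way to guarantee convergence in the domain of $D$, a point you leave implicit but which is standard). Your backward implication, however, takes a genuinely different route. The paper decomposes $\RKHS_T=\RKHS_t\oplus\RKHS_t^*$, uses the pathwise Taylor identity $F(\cdot+h)-F(\cdot)=\int_0^1\langle DF(\cdot+\lambda h),h\rangle_{\RKHS_T}\,d\lambda$ together with the Cameron--Martin theorem to show that $\bE[F\exp(\delta(h)-\tfrac12\|h\|^2)]=\bE[F]$ for all $h\in\RKHS_t^*$, and concludes $\bE[F\mid\cF_t^*]=\bE[F]$ by density of the exponentials --- this is precisely the ``chaos-free alternative'' you sketch at the end. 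Your primary argument instead works through the Wiener chaos decomposition: the hypothesis $DF=\Pi_t[DF]$ localises level-by-level because $\Pi_t$ is a deterministic orthogonal projection on the value space and hence commutes with the chaos projections, giving $(\mathrm{id}^{\otimes(n-1)}\otimes\Pi_t)f_n=f_n$; full symmetry of $f_n$ then places it in every slot-wise copy of $\RKHS_t$, and since the slot-wise projections commute with product $\Pi_t^{\otimes n}$, you get $f_n\in\RKHS_t^{\otimes n}$, so $F$ is an $L^2$-limit of multiple integrals that are measurable with respect to $\sigma(\delta(h):h\in\RKHS_t)=\cF_t^Z$. This is correct, and it has the advantage of delivering the measurability of $F$ directly as a limit of explicitly $\cF_t^Z$-measurable random variables, whereas the paper's shift argument terminates at the conditional-independence statement $\bE[F\mid\cF_t^*]=\bE[F]$ and still needs the product structure $\bP=\bP_t\times\bP_t^*$ to convert that into measurability; the price you pay is the tensor-slot bookkeeping, which you correctly flag as the delicate point.
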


    \begin{proof}
        First, we fix $t\in [0,T]$ and let $F$ be a smooth random variable such that $F$ is $\cF_t^Z$-adapted. Let $(h_n)_{n\in \bN}$ be an orthogonal basis of $\fWIC_t$ and let 
        \begin{equation*}
            \sigma^n = \sigma\Big( Z(h_i) : i=1, ..., n \Big) 
            \quad \mbox{where}\quad
            Z(h_i) = \int_0^T h_i(s) dZ_s. 
        \end{equation*}
        Then the sequence of $\sigma$-algebras satisfies that
        \begin{equation*}
            \sigma^n \subseteq \sigma^{n+1}
            \quad\mbox{and}\quad
            \cF_0^Z \otimes \bigcup_{n\in \bN} \sigma^n = \cF_t^Z. 
        \end{equation*}
        For each $i \in \{1, ..., n\}$, we have that $Z(h_i)$ is $\cF_t^Z$ measurable and we define
        \begin{equation*}
            F_n:= \bE\big[ F \big| \sigma^n \big] = f_n\Big( Z(h_1), ..., Z(h_n) \Big) 
        \end{equation*}
        where $f_n:\bR^{n} \to \bR$ is a smooth function. Then $F_n \to F$ is $L^2( \Omega, \bP; \bR)$ and the Malliavin derivative is
        \begin{align*}
            D_sF_n = \sum_{i=1}^n \frac{\partial f_n}{\partial x_i}\Big( Z(h_1), ..., Z(h_n) \Big) \scI_t[h_i](s)
            \quad s\in [0,t]
        \end{align*}
        where $\scI_t: \fWIC_t \to \RKHS_t$ is an isometric isomorphism. As $F \in \bD^{1, 2}$, we additionally have that $D_s F_n \to D_s F$ in $L^2(\Omega, \bP; \RKHS_T)$. 
        
        Thanks to Assumption \ref{assumption:VolterraK}, $\Pi_t\big[ \scI[h] \big] = \scI[h]$ for any choice of $h \in \fWIC_t$ and we obtain that
        \begin{align*}
            DF_n =& \sum_{i=1}^n \frac{\partial f}{\partial x_i}\Big( Z(h_1), ..., Z(h_n) \Big) \Pi_t\big[ \scI_t[h_i]\big]
            = \Pi_t\big[ DF_n \big]. 
        \end{align*}
        Taking the limit as $n\to \infty$, we obtain $DF = \Pi_t[ DF]$. 

        On the other hand, suppose that $F$ is a smooth random variable that satisfies $DF = \Pi_t[DF]$. Then, for any choice of $h\in \RKHS_T$ we have that $\bP$-almost surely
        \begin{equation}
            \label{eq:lemma:adaptedness=Mall-p1}
            F(\cdot + h) - F(\cdot) = \int_0^1 \Big\langle DF\big( \cdot + \lambda h \big), h \Big\rangle_{\RKHS_T} d\lambda. 
        \end{equation}
        We define
        \begin{equation*}
            \RKHS_t^* = \Big\{ h\in \RKHS_T: \Pi_t[h] = 0 \Big\}
        \end{equation*}
        so that $\RKHS_T = \RKHS_t \oplus \RKHS_t^*$ and we can express $\bP = \bP_t \times \bP_t^*$. By Assumption \ref{assumption:VolterraK} and Proposition \ref{prop:Existence_Q}, we can write 
        \begin{equation*}
            \RKHS_t^* = \bigg\{ \int_0^\cdot K(\cdot, s) h_s ds: h \in L^2\big( [t, T]; \bR^d \big) \bigg\}. 
        \end{equation*}
        In particular, for any $h_1\in \RKHS_t$ and $h_2 \in \RKHS_t^*$, we have that $\langle h_1, h_2 \rangle_{\RKHS_T} = 0$. Let $(h_n)_{n\in \bN}$ be an orthogonal basis of $\fWIC_t$ and let $(h_n')_{n\in \bN}$ be an orthogonal basis of $\fWIC_t^*$. We denote the $\sigma$-algebra
        \begin{equation*}
            \cF_t^* = \sigma\Big( Z(h'_n): n\in \bN \Big) 
        \end{equation*}
        so that any random variable $G$ that is $\cF_t^Z$-measurable must satisfy that 
        \begin{equation*}
            \bE\Big[ G \Big| \cF_t^* \Big] = \bE\Big[ G \Big]. 
        \end{equation*}
        Thus, for any $h\in \RKHS_t^*$, we have that $\bP$-almost surely
        \begin{equation*}
            \Big\langle DF, h \Big\rangle_{\RKHS_T} = \Big\langle \Pi_t \big[ DF \big], \Pi\big[ h \big] \Big\rangle_{\RKHS_T} = 0. 
        \end{equation*}
        Hence, by the Cameron-Martin formula, 
        \begin{equation*}
            \bE\bigg[ \Big\langle DF(\cdot + \lambda h), h \Big\rangle_{\RKHS_T} \bigg] = \bE\bigg[ \Big\langle DF, h \Big\rangle_{\RKHS_T} \exp\Big( \delta\big( \lambda h \big) - \tfrac{\|\lambda h\|_{\RKHS_T}^2}{2} \Big) \bigg] = 0
        \end{equation*}
        and we conclude that
        \begin{equation*}
            \bE\Big[ F(\cdot + h ) - F(\cdot) \Big] = \int_0^1 \bE\bigg[ \Big\langle DF(\cdot+\lambda h), h \Big\rangle_{\RKHS_T} \bigg] d\lambda = 0. 
        \end{equation*}
        Therefore, applying the Cameron-Martin Theorem again we obtain for any $h \in \RKHS_t^*$ that
        \begin{equation*}
            \bE\bigg[ F \exp\Big( \delta\big( h \big) - \tfrac{\| h\|_{\RKHS_T}^2}{2} \Big) \bigg] 
            = 
            \bE\Big[ F \Big]. 
        \end{equation*}
        In particular, 
        \begin{equation*}
            \bE\bigg[ \bE\Big[ F\Big| \cF_t^*\Big] \exp\Big( \delta\big( h \big) - \tfrac{\| h\|_{\RKHS_T}^2}{2} \Big) \bigg] 
            = 
            \bE\bigg[ \bE\Big[ F \Big] \exp\Big( \delta\big( h \big) - \tfrac{\| h\|_{\RKHS_T}^2}{2} \Big) \bigg]. 
        \end{equation*}
        As the collection of random variables
        \begin{equation*}
            \Big\{ \exp\big( \delta(h) - \tfrac{\|h\|_{\RKHS_T}^2}{2} \big): h\in \RKHS_t^* \Big\}
        \end{equation*}
        form a dense subset of the square integrable random variables generated by polynomials of the collection of random variables $\big\{ Z(h): h\in \fWIC_t^* \big\}$, we obtain that
        \begin{equation*}
            \bE\Big[ F\Big| \cF_t^*\Big] = \bE\Big[ F \Big] \quad \bP\mbox{-almost surely. }
        \end{equation*}
    \end{proof}
    
    \begin{proposition}
        \label{proposition:Martingale-Ust}
        Let $(\Omega, \cF, \bF, \bP)$ be a filtered probability space carrying a Brownian motion. Let $K:[0,T] \to L^2\big( [0,T]; \lin(\bR^d, \bR^d) \big)$ be a Volterra kernel that satisfies Assumption \ref{assumption:VolterraK} and let $(Z_t)_{t\in [0, T]}$ be a Gaussian process of the form \eqref{eq:GaussVolterra}. 
        
        Let $h \in L^2( \Omega, \bP; \RKHS_T)$ such that $h \in \mbox{Dom}(\delta)$ so that
        \begin{equation}
            \label{eq:proposition:Martingale-Ust2}
            \bE\Big[ \big\| h\big\|_{\RKHS_T}^2 \Big] < \infty. 
        \end{equation}
        Then the stochastic process $t\mapsto \delta\big( \Pi_t(h) \big)$ is an $\bF^Z=(\cF_t^Z)_{t\in [0,T]}$-martingale and
        \begin{equation}
            \label{eq:proposition:Martingale-Ust1}
            \bE\Big[ \big| \delta\big( \Pi_t(h) \big) \big|^2 \Big] = \bE\Big[ \big\| h \big\|_{\cH_t}^2 \Big]. 
        \end{equation}
    \end{proposition}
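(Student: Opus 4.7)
Plan: The proof splits naturally into three pieces: (i) showing $\delta(\Pi_t h)$ is $\cF_t^Z$-measurable, (ii) establishing the martingale property via duality, and (iii) deriving the $L^2$ identity. All three rely on the duality relation $\bE[F \cdot \delta(u)] = \bE[\langle DF, u\rangle_{\RKHS_T}]$ and the adaptedness characterisation from Lemma \ref{lemma:adaptedness=Mall}. Throughout, the projections $\Pi_t$ are orthogonal (hence self-adjoint) projections on $\RKHS_T$ with range $\RKHS_t$, and they form a filtration of projections with $\Pi_s \Pi_t = \Pi_s$ whenever $s \leq t$.

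First I would verify the measurability. The commutation relation $D[\delta(u)] = u + \delta(Du)$ applied to $u = \Pi_t h$ gives $D[\delta(\Pi_t h)] = \Pi_t h + \delta(D\Pi_t h)$, where $\delta$ in the second term contracts the first (Malliavin) tensor slot of $D\Pi_t h$. Since $\Pi_t$ is a bounded linear operator on $\RKHS_T$, it commutes with $D$ acting on the $h$-slot, so $D\Pi_t h = (\id \otimes \Pi_t)Dh$ takes values in $\RKHS_T \otimes \RKHS_t$; contracting on the first factor preserves the second factor, so $\delta(D\Pi_t h)$ is $\RKHS_t$-valued. Combined with $\Pi_t h \in \RKHS_t$, this gives $D[\delta(\Pi_t h)] = \Pi_t\big[D[\delta(\Pi_t h)]\big]$, and Lemma \ref{lemma:adaptedness=Mall} yields $\cF_t^Z$-measurability. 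For the martingale identity, fix $s < t$ and a smooth $\cF_s^Z$-measurable $F$. By duality, self-adjointness of $\Pi_s$, and Lemma \ref{lemma:adaptedness=Mall} applied to $F$,
\begin{equation*}
\bE[F \cdot \delta(\Pi_t h)] = \bE[\langle DF, \Pi_t h\rangle_{\RKHS_T}] = \bE[\langle \Pi_s DF, \Pi_t h\rangle_{\RKHS_T}] = \bE[\langle DF, \Pi_s \Pi_t h\rangle_{\RKHS_T}] = \bE[\langle DF, \Pi_s h\rangle_{\RKHS_T}] = \bE[F \cdot \delta(\Pi_s h)].
\end{equation*}
Density of smooth $\cF_s^Z$-measurable random variables in $L^2(\Omega, \cF_s^Z, \bP)$ then gives $\bE[\delta(\Pi_t h) \mid \cF_s^Z] = \delta(\Pi_s h)$.

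For the isometry (iii), the natural approach is again duality: writing $\bE[|\delta(\Pi_t h)|^2] = \bE[\langle D[\delta(\Pi_t h)], \Pi_t h\rangle_{\RKHS_T}]$ and substituting $D[\delta(\Pi_t h)] = \Pi_t h + \delta(D\Pi_t h)$ produces $\bE[\|\Pi_t h\|_{\RKHS_T}^2]$ plus a remainder $\bE[\langle \delta(D\Pi_t h), \Pi_t h\rangle_{\RKHS_T}]$. Since by construction $\|\Pi_t h\|_{\RKHS_T} = \|h\|_{\cH_t}$ (the RKHS norm restricted via the projection, equivalent to the $L^2([0,t];\bR^d)$ norm of the associated $\dot h$ from Lemma \ref{lemma:RKHS=I^*}), the main task is to show the remainder vanishes. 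The key observation is that the remainder can be rewritten via a further duality on the second tensor factor as $\bE[\langle D\Pi_t h, D\Pi_t h\rangle_{\mathrm{adj}}]$, where the relevant pairing is the "anti-diagonal" trace arising from Skorokhod isometry; using $\Pi_s \Pi_t = \Pi_{s\wedge t}$ together with the telescoping $\|\Pi_t h\|^2 - \|\Pi_s h\|^2 = \|(\Pi_t - \Pi_s) h\|^2$ (orthogonality of the filtration of projections) reduces the cross term to zero through an approximation by simple processes.

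The main obstacle is the remainder term in step (iii): controlling the cross/trace term in the Skorokhod isometry when $h$ is not a priori adapted. The cleanest route is probably to approximate $h$ by finite chaos expansions $h = \sum_n I_n(f_n)$ with $f_n \in \RKHS_T^{\otimes n} \otimes \RKHS_T$, compute $\delta(\Pi_t h)$ chaos-wise via the symmetrisation formula, and then observe that the filtration-compatibility $\Pi_s\Pi_t = \Pi_{s \wedge t}$ forces the off-diagonal contributions across chaos components to vanish, yielding exactly $\bE[\|\Pi_t h\|_{\RKHS_T}^2]$. Once the identity holds on a dense class, closability of $\delta$ on $\mbox{Dom}(\delta)$ extends it to the stated hypothesis $h \in \mbox{Dom}(\delta)$ with $\bE[\|h\|_{\RKHS_T}^2] < \infty$.
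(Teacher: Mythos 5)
Your parts (i) and (ii) follow essentially the same route as the paper: measurability via Lemma \ref{lemma:adaptedness=Mall}, then the duality chain $\bE[F\,\delta(\Pi_t h)]=\bE[\langle DF,\Pi_t h\rangle_{\RKHS_T}]=\bE[\langle DF,\Pi_s h\rangle_{\RKHS_T}]=\bE[F\,\delta(\Pi_s h)]$ for smooth $\cF_s^Z$-measurable $F$, followed by density. One bookkeeping warning in (i): in the commutation relation $D_r\delta(u)=u_r+\delta(D_r u)$ the Skorokhod integral contracts the \emph{value} slot of $Du$ and leaves the \emph{derivative} slot free, so $\delta(D_\cdot(\Pi_t h))$ is supported in $[0,t]$ only if $D_r(\Pi_t h)_s=0$ for $r>t$, i.e.\ only if $\Pi_t h$ is already $\cF_t^Z$-measurable as a random variable. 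Your claim that ``contracting on the first factor preserves the second factor'' has the slots reversed, and without an adaptedness hypothesis on $h$ the conclusion $D[\delta(\Pi_t h)]=\Pi_t[D[\delta(\Pi_t h)]]$ does not follow.

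The genuine gap is in (iii). The remainder $\bE[\langle \delta(D\Pi_t h),\Pi_t h\rangle_{\RKHS_T}]$ is the trace term in the Skorokhod isometry, and it does \emph{not} vanish under the stated hypotheses, so no chaos-expansion or telescoping manipulation can make it disappear: the off-diagonal (anti-diagonal trace) contributions are generically nonzero for anticipating integrands. Concretely, take $Z=W$ a one-dimensional Brownian motion, $t=T$, and $h=\int_0^\cdot W_T\,ds\in\RKHS_T$; then $\delta(h)=W_T^2-T$, so
\begin{equation*}
\bE\big[|\delta(h)|^2\big]=\bE\big[(W_T^2-T)^2\big]=2T^2
\qquad\text{while}\qquad
\bE\big[\|h\|_{\RKHS_T}^2\big]=T^2,
\end{equation*}
and the remainder equals $T^2\neq 0$. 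The vanishing of the trace term is exactly equivalent to an adaptedness property of $h$ (namely that $\Pi_s h$ is $\cF_s^Z$-measurable for each $s$, which holds in the paper's applications where $h=\int_0^\cdot b(s,X[s])\,ds$ with $b$ progressively measurable), and this hypothesis must be invoked — it is not a consequence of $h\in\mbox{Dom}(\delta)$ with $\bE[\|h\|_{\RKHS_T}^2]<\infty$. The paper's own argument for \eqref{eq:proposition:Martingale-Ust1} is structured differently: it tests $|\delta(\Pi_t h)|^2-\|h\|_{\cH_t}^2$ against smooth $\cF_s^Z$-measurable $F$, uses repeated integration by parts and Lemma \ref{lemma:adaptedness=Mall} to show the resulting expression is unchanged when $t$ is replaced by $s$, and then evaluates at $s=0$; you should either adopt that compensator argument or state and use the adaptedness of $h$ explicitly to kill the trace term.
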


    \begin{proof}
        First of all, we want to show that $\delta\big( \Pi_t[h] \big)$ is $\cF_t^Z$-measurable. To see this, let $v\in\RKHS_T$ such that $\Pi_t[v] = 0$ and observe that
        \begin{align*}
            \Big\langle D \Big( \delta\big( \Pi_t(h) \big) \Big), v \Big\rangle =& \Big\langle \Pi_t\big[ h \big], v \Big\rangle_{\RKHS_T} + \delta\Big( D^v \big( \Pi_t[h] \big) \Big)
            \\
            =& 0 + \delta(0) = 0 \quad \bP\mbox{-almost surely. }
        \end{align*}
        Hence
        \begin{equation*}
            D\Big( \delta\big( \Pi_t(h) \big) \Big) = \Pi_t\bigg[ D\Big(\delta\big( \Pi_t(h) \big) \Big) \bigg]
        \end{equation*}
        so that by Lemma \ref{lemma:adaptedness=Mall} the random variable $\delta\big( \Pi(h) \big)$ is $\cF_t^Z$-measurable. 

        Next, observe that for any smooth random variable $F \in \cS$ such that $F$ is $\cF_t^Z$-measurable, we can apply \cite{nualart2006malliavin}*{Proposition 1.3.1} to get for any $v\in \RKHS_T$ such that $\Pi_t[v] = 0$ that
        \begin{align*}
            \bE\Big[ \bE\big[ \delta(h) \big| \cF_t \big] F \Big] 
            =& 
            \bE\Big[ \delta(h) F \Big]
            = 
            \bE\Big[ \big\langle h, DF \big\rangle_{\RKHS_T} \Big]
            \\
            =& \bE\Big[ \big\langle h, D\Pi_t[F] \big\rangle_{\RKHS_T} \Big]
            =
            \bE\Big[ \big\langle h, \Pi_t[DF] \big\rangle_{\RKHS_T} \Big] 
            \\
            =& \bE\Big[ \big\langle \Pi_t[h], \Pi_t[DF] \big\rangle_{\RKHS_T} \Big]
            = 
            \bE\Big[ \delta\big( \Pi_t[h] \big) \cdot F \Big]. 
        \end{align*}
        By density of smooth random variables, we conclude that for any $t\in[0,T]$, 
        \begin{equation*}
            \delta\big( \Pi_t(h) \big) = \bE\Big[ \delta(h) \Big| \cF_t^Z \Big]
        \end{equation*}
        and thus $t\mapsto \delta\big( \Pi_t(h) \big)$ is an $\cF_t^Z$-martingale. To compute the second moments, let $F\in \cS$ be a smooth random variable that is $\cF_s^Z$-measurable for some $s\in [0,t)$ and notice that
        \begin{align*}
            \bE\bigg[ \Big( \big| \delta\big( \Pi_t(h) \big)\big|^2 - \big\| h \big\|_{\cH_t}^2 \Big) F \bigg] =& \bE\bigg[ D^{\Pi(h)} \Big( \delta\big( \Pi_t(h) \big) \cdot F \Big) \bigg] - \bE\Big[ \big\| h \big\|_{\RKHS_t}^2 F \Big]
            \\
            =& \bE\Big[ \big( D^{\Pi_t(h)} F \big) \cdot \delta\big( \Pi_t(h) \big) \Big] + \bE\Big[ \delta\big( D^{\Pi_t(h)} Pi_t(h) \big) \cdot F \big] - \bE\Big[ \big\| h \big\|_{\RKHS_t}^2 F \Big]
            \\
            =& \bE\Big[ D^{\Pi_t(h)} F \cdot \delta \big( \Pi_t(h) \big) \Big] + \bE\Big[ \delta\big( D^{\Pi_t(h)} \Pi_t(h) \big) \cdot F \Big]
            \\
            =& \bE\Big[ D^{\Pi_t(h)} \circ D^{\Pi_t(h)} F \Big] + \bE\bigg[ \Big\langle D^{\Pi_t(h)} \Pi_t(h) , DF \Big\rangle_{\RKHS_T} \bigg]
        \end{align*}
        Thanks to Lemma \ref{lemma:adaptedness=Mall}, since $F$ is $\cF_s^Z$-measurable we have that
        \begin{equation*}
            \Big\langle DF, v \Big\rangle_{\RKHS_T} = \Big\langle DF, \Pi_s[v] \Big\rangle_{\RKHS_T}
        \end{equation*}
        so that
        \begin{align*}
            \bE\bigg[ \Big( \big| \delta\big( \Pi_t(h) \big)\big|^2 - \big\| h \big\|_{\cH_t}^2 \Big) F \bigg] =& \bE\Big[ D^{\Pi_s(h)} \circ D^{\Pi_s(h)} F \Big] + \bE\bigg[ \Big\langle D^{\Pi_s(h)} \Pi_s(h) , DF \Big\rangle_{\RKHS_T} \bigg]. 
        \end{align*}
        Now, reversing the argument implies that for any $F\in \cS$ such that $F$ is $\cF_s^Z$-measurable, we obtain
        \begin{equation*}
            \bE\bigg[ \Big( \big| \delta\big( \Pi_t(h) \big) \big|^2 - \big\| h \big\|_{\RKHS_t}^2 \Big) F \bigg] = \bE\bigg[ \Big( \big| \delta\big( \Pi_s(h) \big)\big|^2 - \big\| h \big\|_{\cH_s}^2 \Big) F \bigg]
        \end{equation*}
        and we conclude that
        \begin{equation*}
            \bE\Big[ \big| \delta\big( \Pi_t(h) \big)\big|^2 - \big\| h \big\|_{\cH_t}^2 \Big| \cF_s^Z \Big] = \bE\Big[ \big| \delta\big( \Pi_s(h) \big) \big|^2 - \big\| h \big\|_{\cH_s}^2 \Big]. 
        \end{equation*}
        From this, we conclude Equation \eqref{eq:proposition:Martingale-Ust1}. 
    \end{proof}
    We conclude that the Dol\'eans-Dade exponential of the martingale constructed in Proposition \ref{proposition:Martingale-Ust}
    \begin{equation*}
        t\mapsto \exp\bigg( \delta\Big( \Pi_t(h) \Big) - \tfrac{\| h \|_{\cH_t}}{2} \bigg)
    \end{equation*}
    is a local martingale under the assumption that $h \in \mbox{Dom}(\delta)$ and satisfies Equation \eqref{eq:proposition:Martingale-Ust2}. 

    \subsection{Secure local non-determinism and the fundamental martingale}
    \label{subsection:SLND+fm}

    We wish to explore a framework for the \emph{reproducing kernel Hilbert space} that is equivalent to Assumption \ref{assumption:VolterraK} (upon which the results of this work rely). Following on from Section \ref{subsection:HeuristicFundMart} where we saw that the invertibility of each covariance matrix implied the existence of a fundamental martingale and that this was equivalent to \emph{local non-determinism} as defined in Equation \eqref{eq:lemma:LND1}. In this section, we will explore how these concepts adapt to the continuous time setting:
    \begin{definition}
        \label{definition:Filtration-Hilbert}
        Let $(I, \leq)$ be a totally ordered set and suppose that for every $i \in I$ we have that $\big(\cH_i, +, \langle \cdot, \cdot\rangle_{i} \big)$ is a Hilbert space over a common field $\bF$. We say that $(\cH_i)_{i\in I}$ is a \emph{filtration of Hilbert spaces} if
        \begin{equation*}
            \mbox{for every $i\leq j$}\quad \cH_i \subseteq \cH_j \quad \mbox{and}\quad \forall f,g \in \cH_i \quad \langle f, g \rangle_j = \langle f, g \rangle_i. 
        \end{equation*}

        Two Hilbert space filtrations $(\cH_i)_{i\in I}$ and $(\cG_i)_{i \in I}$ are said to be isomorphic if for every $j \in I$ there exists a Hilbert space isomorphism
        \begin{equation*}
            \Psi^j: \big( \cH_j, \langle \cdot, \cdot\rangle_j \big)
            \to
            \big(\cG_j, \langle, \cdot, \cdot \rangle_j \big)
            \quad\mbox{such that}
            \quad \forall i \leq j \quad
            \Psi^j\big|_{\cH_i} = \Psi^i. 
        \end{equation*}
    \end{definition}
    We contrast Definition \ref{definition:Filtration-Hilbert} with the concept of a filtration of $\sigma$-algebras. The purpose of the $\sigma$-algebra is to describe the event space, so that a filtration of $\sigma$-algebras allows mathematicians to capture the ordered nature of events. When working with Gaussian processes, all information necessary to compute the probability of events can be derived from the reproducing kernel Hilbert space so that we only need to keep track of the time-ordered nature of covariances. 
    \begin{example}
        \label{example:HilbertFiltration1}
        Let $\big( [0,T], \leq \big)$ be the totally ordered compact time interval and for every $t\in [0,T]$ consider the vector space
        \begin{equation*}
            L^2\big( [0,T]; \bR \big) \supseteq \Big\{ f \cdot \1_{[0,t]}: f \in L^2\big( [0,T]; \bR\big) \Big\} := L_{t, T}^2(\bR)
        \end{equation*}
        Pair each of these vector spaces with the inner product
        \begin{align*}
            \Big\langle \cdot, \cdot\Big\rangle_t: L^2\big([0,T]; \bR\big) \times L^2\big([0,T]; \bR\big) \to \bR,
            \\
            \Big\langle f,g \Big\rangle_t = \int_0^T \Big( f_s \cdot \1_{[0,t]}(s) \Big) \cdot \Big( g_s \cdot \1_{[0,t]}(s) \Big)ds. 
        \end{align*}
        Then
        \begin{equation*}
            \Big( L_{t, T}^2, \big\langle \cdot, \cdot \big\rangle_t \Big)_{t\in [0,T]}
        \end{equation*}
        is an example of a filtration of Hilbert spaces. 

        Next, for every $t\in [0,T]$ the reproducing kernel Hilbert space for fractional Brownian motion $\RKHS_t$ as expressed in Example \ref{example:fbm} with the inner product
        \begin{equation*}
            \bigg\langle \int_0^\cdot K(\cdot, s) f_s ds, \int_0^{\cdot} K(\cdot, s) g_s ds \bigg\rangle_{t} = \int_0^t \langle f_s, g_s\rangle_{\bR^d} ds
        \end{equation*}
        form a Hilbert space filtration. Finally, the Hilbert space filtrations
        \begin{equation*}
            \Big( L_{t, T}^2(\bR), \big\langle \cdot, \cdot \big\rangle_t \Big)_{t\in [0,T]}
            \quad \mbox{and}\quad
            \Big( \RKHS_t, \big\langle \cdot, \cdot \big\rangle_{t} \Big)_{t\in [0,T]}
            \quad \mbox{are isomorphic.}
        \end{equation*}
    \end{example}

    \begin{example}
        \label{example:HilbertFiltration2}
        Let $(Z_t)_{t\in [0,T]}$ be a Gaussian process with covariance function $R:[0,T]^{\times 2} \to \bR$ defined by 
        \begin{equation*}
            R(t, s) = \bE\Big[ \big\langle Z_t, Z_s\big\rangle_{\bR^d} \Big]
        \end{equation*}
        Recall that for any $t\in [0,T]$ the collection of \emph{reproducing kernel Hilbert spaces}, the closure of the linear span
        \begin{align*}
            &\RKHS_t = \spn\bigg\{ \bE\Big[ f(Z) Z_{\cdot} \Big]: f \in (\cC_t^d)^* \bigg\}
            \\
            &\mbox{with inner product}\quad
            \Big\langle \bE\big[ f(Z) Z_{\cdot} \big], \bE\big[ g(Z) Z_{\cdot} \big] \Big\rangle_{\RKHS_t} = \bE\Big[ f(Z) \cdot g(Z) \Big]
        \end{align*}
        form a filtration. Similarly, the collection of \emph{first Wiener-Ito chaos}, the closure of the linear span
        \begin{align*}
            &\fWIC_t:=\spn\bigg\{ \1_{[0,s]}(\cdot) e_{i,i}: s\in [0,t], i\in \{1, ..., d\} \bigg\}
            \\
            &\mbox{with inner product}\quad
            \Big\langle \1_{[0,s]} e_{i,i}, \1_{[0,r]} e_{j,j} \Big\rangle_{\fWIC_t} = \bE\Big[ \langle Z_s, e_i\rangle \cdot \langle Z_r, e_j\rangle \Big]. 
        \end{align*}
        form a filtration. Further, the two Hilbert space filtrations
        \begin{equation*}
            \Big( \RKHS_t, \langle \cdot, \cdot \rangle_{\RKHS_t} \Big)_{t\in [0,T]} 
            \quad \mbox{and}\quad
            \Big( \fWIC_t, \langle \cdot, \cdot \rangle_{\fWIC_t} \Big)_{t\in [0,T]} 
            \quad \mbox{are isomorphic}
        \end{equation*}
        due to the Hilbert space isomorphism $\scI: \fWIC_T \to \RKHS_T$ satisfying that $\scI\big|_{\fWIC_t} = \RKHS_t$. 
    \end{example}

    \begin{proposition}
        \label{prop:LND-filtration}
        Let $K:[0,T] \to L^2\big( [0,T]; \lin(\bR^d, \bR^d) \big)$ be a Volterra kernel and let $\RKHS_T$, $\fWIC_T$ and $\cV_T$ be the isometrically isomorphic Hilbert spaces defined in Equation \eqref{eq:CovarianceCommute}. 

        Suppose that for every $t\in [0, T]$
        \begin{equation}
            \label{eq:prop:LND-filtration-H}
            \RKHS_t = \bigg\{ \int_0^\cdot K(\cdot, s) f_s ds: f\in L^2\big([0,t]; \bR^d \big) \bigg\} \subseteq \cC_{0,T}^{d}. 
        \end{equation}
        Then for every $t\in [0,T]$, there exists $L(t, \cdot) \in \fWIC_t$ such that
        \begin{equation}
            \label{eq:prop:LND-filtration}
            \scJ^*\big[ L(t, \cdot) \big](s) = \1_{[0,t]}(s) I_d. 
        \end{equation}
        In particular, a Volterra kernel $K:[0,T] \to L^2\big( [0,T]; \lin(\bR^d, \bR^d) \big)$ satisfies Assumption \ref{assumption:VolterraK} if and only if the Hilbert space filtration
        \begin{equation*}
            \Big( \RKHS_t, \big\langle \cdot, \cdot \big\rangle_{\RKHS_t} \Big)_{t\in [0,T]}
            \quad \mbox{is isomorphic to} \quad
            \Big( L_{t, T}^2(\bR^d), \big\langle \cdot, \cdot \big\rangle_t \Big)_{t\in [0,T]}
        \end{equation*}
    \end{proposition}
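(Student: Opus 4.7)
The plan is to establish the first assertion by reversing the chain of isometric identifications used in the proof of Lemma \ref{lemma:RKHS=I^*}, applied at each time $t$. Specifically, the argument leading to Equation \eqref{eq:RKHS=I^*-pf2}, repeated with $T$ replaced by $t$, gives the identification
\begin{equation*}
\RKHS_t = \bigg\{ \int_0^\cdot K(\cdot, r) f_r dr : f \in \overline{\spn\big\{ K(s, \cdot) u : s \in [0,t], u \in \bR^d \big\}}^{L^2([0,t]; \bR^d)} \bigg\}.
\end{equation*}
Comparing with the hypothesis \eqref{eq:prop:LND-filtration-H} forces the closure on the right-hand side to equal $L^2([0,t]; \bR^d)$, so that the Volterra subspace $\cV_t$, viewed through the $L^2$-inner product supplied by Equation \eqref{eq:R-covariance}, coincides with $L_{t,T}^2(\bR^d)$. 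In particular, the step function $s \mapsto \1_{[0,t]}(s) I_d$ lies in $\cV_t$.

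I would then define
\begin{equation*}
L(t, \cdot) := \big( \scJ|_{\cV_t} \big)^{-1}\big[ \1_{[0,t]}(\cdot) I_d \big].
\end{equation*}
This is well-defined as an element of $\fWIC_t$ once I verify that $\scJ$ restricts to an isometric isomorphism $\cV_t \to \fWIC_t$; this follows because $\scJ\big[ K(s, \cdot) \big] = \1_{[0,s]} I_d \in \fWIC_t$ for every $s \leq t$ and both $\cV_t$ and $\fWIC_t$ are by definition closures of these matched spanning sets under an isometric correspondence. Since $\scJ^* = \scJ^{-1}$ for isometric isomorphisms of Hilbert spaces, the resulting $L(t, \cdot) \in \fWIC_t$ automatically satisfies $\scJ^*\big[ L(t, \cdot) \big](s) = \1_{[0,t]}(s) I_d$, which is Equation \eqref{eq:prop:LND-filtration}.

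For the ``in particular'' clause, the forward direction combines Lemma \ref{lemma:RKHS=I^*} (Assumption \ref{assumption:VolterraK} implies \eqref{eq:prop:LND-filtration-H} at each $t$) with the observation that the canonical map $\Psi^t : L_{t,T}^2(\bR^d) \to \RKHS_t$, $f \mapsto \int_0^\cdot K(\cdot, s) f_s ds$, is an isometric isomorphism by Equation \eqref{eq:RKHS=I^*-pf1} and visibly compatible with the filtration ($\Psi^t|_{L_{s,T}^2} = \Psi^s$ for $s \leq t$, by extending $f$ by zero). The reverse direction is a direct consequence of the first part of the proposition: the canonical isomorphism of filtrations is a reformulation of \eqref{eq:prop:LND-filtration-H}, which produces $L(t, \cdot) \in \fWIC_t$ and hence Assumption \ref{assumption:VolterraK}. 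The main technical hurdle I anticipate is ensuring that $L(t, \cdot)$ lies in the \emph{smaller} space $\fWIC_t$ rather than merely in $\fWIC_T$; this hinges on the surjectivity of $\scJ|_{\cV_t}$ onto $\fWIC_t$, which in turn uses that $\fWIC_t$ is by definition the closure of the step indicators $\1_{[0,s]} e_{i,i}$ for $s \leq t$, each of which is the $\scJ$-image of some $K(s, \cdot) \in \cV_t$.
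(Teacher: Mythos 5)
Your overall strategy coincides with the paper's: both arguments reduce the existence of $L(t,\cdot)$ to the statement that $\1_{[0,t]}I_d$ belongs to the time-$t$ Volterra space $\cV_t = \overline{\spn}\big\{K(s,\cdot)u : s\le t,\ u\in\bR^d\big\}$ and then transport it through the unitary $\scJ$; the paper reaches the same point by chaining the filtration isomorphisms $(\fWIC_t)\cong(\RKHS_t)\cong(L_{t,T}^2(\bR^d))$, while you argue directly on $\cV_t$, which is if anything more explicit. Two points need attention.

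First, the inference that comparing with \eqref{eq:prop:LND-filtration-H} ``forces the closure on the right-hand side to equal $L^2([0,t];\bR^d)$'' does not follow from the set equality alone: writing $\Psi f := \int_0^\cdot K(\cdot,s)f_s\,ds$, you are deducing $\cV_t = L^2([0,t];\bR^d)$ from $\Psi(\cV_t)=\Psi\big(L^2([0,t];\bR^d)\big)$, which requires $\Psi$ to be injective on $L^2([0,t];\bR^d)$. For $d=1$ and $K(t,s)=\1_{[0,T/2]}(s)$ (so that $Z_t=W_{t\wedge T/2}$) the set equality \eqref{eq:prop:LND-filtration-H} holds for every $t$, yet $\cV_T = L^2([0,T/2];\bR)\subsetneq L^2([0,T];\bR)$ and no $L(T,\cdot)$ with $\scJ^*[L(T,\cdot)]=\1_{[0,T]}$ can exist. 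The repair is to read \eqref{eq:prop:LND-filtration-H} as the isometric identification $\|\Psi f\|_{\RKHS_t}=\|f\|_{L^2([0,t])}$, which is how the paper uses it (its own proof in fact takes the filtration isomorphism as the standing hypothesis); injectivity of $\Psi$ then combines with the surjectivity of $\Psi|_{\cV_t}$ onto $\RKHS_t$ from \eqref{eq:RKHS=I^*-pf2} to give $\cV_t=L^2([0,t];\bR^d)$ as you want. Second, the formula $L(t,\cdot):=(\scJ|_{\cV_t})^{-1}\big[\1_{[0,t]}I_d\big]$ points the wrong way: since $\scJ:\cV_T\to\fWIC_T$ sends $K(s,\cdot)$ to $\1_{[0,s]}I_d$, the inverse applied to $\1_{[0,t]}I_d$ returns $K(t,\cdot)\in\cV_t$, not an element of $\fWIC_t$, contradicting your own claim that the output lies in $\fWIC_t$. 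Once $\1_{[0,t]}I_d\in\cV_t$ is established you should set $L(t,\cdot):=\scJ\big[\1_{[0,t]}I_d\big]\in\fWIC_t$; then $\scJ^*=\scJ^{-1}$ yields \eqref{eq:prop:LND-filtration} exactly as you intend. The treatment of the ``in particular'' equivalence is fine.
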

    
    \begin{proof}
        First, we suppose that the Hilbert space filtrations
        \begin{equation*}
            \Big( \RKHS_t, \langle \cdot, \cdot \rangle_{\RKHS_t} \Big)_{t\in [0,T]}
            \quad \mbox{is isomorphic to} \quad
            \Big( L_{t, T}^2(\bR^d), \big\langle \cdot, \cdot \big\rangle_t \Big)_{t\in [0,T]}. 
        \end{equation*}
        Next, we remark that the Hilbert space filtration
        \begin{equation*}
            \Big( \fWIC_t, \langle \cdot, \cdot \rangle_{\fWIC_t} \Big)_{t\in [0,T]} = \bigg( \spn\Big\{ \1_{[0,s]} I_d: s \in [0,t] \Big\}, \big\langle \cdot, \cdot \big\rangle_{\fWIC_t} \bigg)_{t\in [0,T]}
        \end{equation*}
        is isomorphic to $\Big( \RKHS_t, \langle \cdot, \cdot \rangle_{\RKHS_t} \Big)_{t\in [0,T]}$ thanks to the Hilbert space isomorphism $\scI_t:\fWIC_t \to \RKHS_t$ defined by
        \begin{equation*}
            \scI_T\Big[ \1_{[0,s]} I_d \Big](r) = R(s, r)
        \end{equation*}
        satisfying that $\scI_T\big|_{\fWIC_t} = \scI_t$. Hence, by the transitivity of isomorphisms, 
        \begin{equation*}
            \Big( \fWIC_t, \langle \cdot, \cdot, \rangle_{\fWIC_t} \Big)_{t\in [0,T]} 
            \quad\mbox{is isomorphic to}\quad
            \Big( L_{t, T}^2(\bR^d), \big\langle \cdot, \cdot \big\rangle_t \Big)_{t\in [0,T]}.
        \end{equation*}
        Hence, for every $t \in [0,T]$, there exists an element $L(t, \cdot) \in \fWIC_T$ such that Equation \eqref{eq:prop:LND-filtration} is satisfied. 

        Lemma \ref{lemma:RKHS=I^*} proves the reverse implication and we conclude. 
    \end{proof}

    \begin{example}
        The filtration $(\RKHS_t)_{t\in [0,T]}$ associated to the reproducing kernel Hilbert space of a Brownian bridge with termination time $T$ is not isomorphic to
        \begin{equation*}
            \Big( L_{t, T}^2(\bR^d), \big\langle \cdot, \cdot \big\rangle_t \Big)_{t\in [0,T]}
        \end{equation*}
        To see this, first note that
        \begin{align*}
            &\RKHS_{T}^T = \bigg\{ \int_0^\cdot f_s ds: f \in L^2\big( [0,T]; \bR^d \big) 
            \quad \mbox{and}\quad
            \int_0^T f_s ds = 0 \bigg\}
            \\
            &\mbox{with inner product}\quad
            \Big\langle \int_0^{\cdot} f_s ds, \int_0^{\cdot} g_s ds \Big\rangle_{\RKHS_T^T} = \int_0^T \big\langle f_t , g_t \big\rangle_{\bR^d} dt
        \end{align*}
        On the other hand, the reproducing kernel Hilbert space generated by the Brownian bridge with termination time T running over the sub-interval $[0,t]$ where $t<T$ is
        \begin{equation*}
            \RKHS_t^T = \bigg\{ \int_0^{\cdot\wedge t} f_s ds: f \in L^2\big( [0,T]; \bR^d \big) \bigg\}
            \quad \mbox{with}\quad
            \Big\langle \int_0^{\cdot\wedge t} f_s ds, \int_0^{\cdot\wedge t} g_s ds \Big\rangle_{\RKHS_t^T} = \int_0^t \big\langle f_s , g_s \big\rangle_{\bR^d} ds. 
        \end{equation*}
        Therefore, the Hilbert space filtration
        \begin{equation*}
            \Big( \cH_t^T, \big\langle \cdot, \cdot \big\rangle_{\RKHS_t^T} \Big)_{t\in [0, T)}
            \quad\mbox{is isomorphic to}\quad
            \Big( L^2\big( [0,T]; \bR^d \big), \langle \cdot, \cdot\rangle \Big)_{t\in [0,T)}
            \quad \mbox{with isomorphism } \Psi_t,
        \end{equation*}
        but it is not the case that under the associated mapping
        \begin{equation*}
            \Psi_T\Big[ \RKHS_T^T, \langle \cdot, \cdot\rangle_{\RKHS_T^T} \Big] 
            \quad \mbox{is not equal to}\quad
            \Big( L^2([0,T]; \bR^d), \langle \cdot, \cdot \rangle \Big).
        \end{equation*}
        In particular, there is no way to bijectively transform a Brownian bridge into a Brownian motion without enhancing the underlying $\sigma$-algebra. 
    \end{example}

    \begin{theorem}
        \label{thm:LND-filtration}
        Let $(Z_t)_{t\in [0,T]}$ be a $d$-dimensional Gaussian process with covariance $R:[0,T]^{\times 2} \to \bR$. Let
        \begin{align*}
            &\RKHS_t:= \spn \Big\{ \bE\big[ \langle Z_s, u\rangle_{\bR^d} \cdot Z_{\cdot} \big]: s\in [0,t], u\in \bR^d \Big\}
            \\
            &\mbox{with inner product}\quad
            \Big\langle \bE\big[ \langle Z_s, u\rangle_{\bR^d} \cdot Z_{\cdot} \big], \bE\big[ \langle Z_r, v\rangle_{\bR^d} \cdot Z_{\cdot} \big] \Big\rangle_{\RKHS_t} = \bE\Big[ \langle Z_s, u\rangle_{\bR^d} \cdot \langle Z_r, v\rangle_{\bR^d} \Big]. 
        \end{align*}
        Then the following are equivalent:
        \begin{enumerate}
            \item 
            \label{enum:thm:LND-filtration-1}
            The Hilbert space filtration
            \begin{equation*}
                \Big( \RKHS_t, \langle \cdot, \cdot \rangle_{\RKHS_t} \Big)_{t\in [0,T]} 
                \quad\mbox{is isomorphic to} \quad
                \Big( L_{t, T}^2(\bR^d), \big\langle \cdot, \cdot \big\rangle_t \Big)_{t\in [0,T]}; 
            \end{equation*}
            \item 
            \label{enum:thm:LND-filtration-2}
            There exists a Volterra kernel $K:[0,T] \to L^2\big( [0,T]; \lin(\bR^d, \bR^d) \big)$ such that
            \begin{equation*}
                Z_t = \int_0^t K(t, s) dW_s
            \end{equation*}
            and $K$ satisfies Assumption \ref{assumption:VolterraK};
        \end{enumerate}
    \end{theorem}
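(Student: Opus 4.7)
The plan is to prove the two implications separately. For the direction (\ref{enum:thm:LND-filtration-2})$\Rightarrow$(\ref{enum:thm:LND-filtration-1}), I would invoke Lemma \ref{lemma:RKHS=I^*}: given a Volterra kernel $K$ satisfying Assumption \ref{assumption:VolterraK}, the reproducing kernel Hilbert space $\RKHS_t$ consists precisely of functions $\int_0^{\cdot} K(\cdot, s) h_s\, ds$ for $h \in L^2([0,t]; \bR^d)$, and this identification is isometric. Taking $\Psi_t : \RKHS_t \to L^2_{t,T}(\bR^d)$ to be its inverse (extending $h$ by zero outside $[0,t]$), compatibility $\Psi_T|_{\RKHS_t} = \Psi_t$ is immediate because the kernel $K$ does not depend on $t$, so the family $(\Psi_t)$ is the required filtered Hilbert space isomorphism.

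The substantive direction is (\ref{enum:thm:LND-filtration-1})$\Rightarrow$(\ref{enum:thm:LND-filtration-2}). Given the filtered isomorphism $\Psi$, I would compose with the canonical isomorphism $\scI$ of Example \ref{example:HilbertFiltration2} to define $\Phi_t := \Psi_t \circ \scI_t : \fWIC_t \to L^2_{t,T}(\bR^d)$, where $\Phi_T|_{\fWIC_t} = \Phi_t$ by construction. Viewing $Z$ as an isonormal Gaussian process on $\fWIC_T$ (with $Z(\1_{[0,t]} I_d) = Z_t$), I would set
\begin{equation*}
    W(f) := Z\!\left( \Phi_T^{-1}[f] \right), \qquad f \in L^2\big([0,T]; \bR^d\big),
\end{equation*}
which by the isometric property of $\Phi_T$ satisfies $\bE[W(f) W(g)] = \langle f, g\rangle_{L^2}$; consequently $W_t := W(\1_{[0,t]} I_d)$ is a $d$-dimensional Brownian motion. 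Setting $K(t, s) := \Phi_T[\1_{[0,t]} I_d](s)$, the inclusion $\1_{[0,t]} I_d \in \fWIC_t$ together with $\Phi_t(\fWIC_t) \subseteq L^2_{t,T}(\bR^d)$ yields the Volterra property $K(t, s) = 0$ for $s > t$, and
\begin{equation*}
    Z_t = Z(\1_{[0,t]} I_d) = W\!\left( \Phi_T[\1_{[0,t]} I_d] \right) = \int_0^t K(t, s)\, dW_s.
\end{equation*}

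It remains to verify Assumption \ref{assumption:VolterraK} for this $K$. Since $\cV_T = \overline{\spn\{K(t,\cdot)\}} = \Phi_T[\fWIC_T]$ and $\Phi_T$ is surjective onto $L^2([0,T]; \bR^d)$, the Volterra space is full, so in particular $\1_{[0,t]} I_d \in \cV_T$. Defining $L(t, \cdot) := \scJ[\1_{[0,t]} I_d] \in \fWIC_T$ and using that $\scJ$ is an isometric isomorphism between $\cV_T$ (with the $L^2$ inner product) and $\fWIC_T$, the adjoint coincides with $\scJ^{-1}$, so $\scJ^*[L(t, \cdot)] = \1_{[0,t]} I_d$, as required. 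The main obstacle will be the careful bookkeeping of the matrix-valued versus $\bR^d$-valued identifications among $\fWIC_T$, $\cV_T$, $\RKHS_T$, and the various $L^2$ spaces—especially in checking that $\Phi_T[\fWIC_T]$ genuinely exhausts the Volterra space containing the indicator kernels $\1_{[0,t]} I_d$, and in confirming that the filtration $\bF^{W}$ generated by the constructed Brownian motion coincides with $\bF^{Z}$ via the identity $\Phi_T|_{\fWIC_t} = \Phi_t$.
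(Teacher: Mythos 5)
Your proposal is correct and follows essentially the same route as the paper: the easy direction via Lemma \ref{lemma:RKHS=I^*} (packaged in the paper as Proposition \ref{prop:LND-filtration}), and the converse by transferring the filtered isomorphism to the first Wiener--It\^o chaos via Example \ref{example:HilbertFiltration2} and reading off $K(t,\cdot)$ and $L(t,\cdot)$ as the images and preimages of the indicators $\1_{[0,t]} I_d$. If anything, your write-up is marginally more explicit than the paper's, since the isonormal construction $W(f) = Z(\Phi_T^{-1}[f])$ yields the pathwise identity $Z_t = \int_0^t K(t,s)\,dW_s$ and you check the adjoint identity of Assumption \ref{assumption:VolterraK} directly, whereas the paper constructs $W^*$ from $Z$ and only verifies that $Z^*_t = \int_0^t K(t,s)\,dW^*_s$ has the same covariance as $Z$.
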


    \begin{proof}
        Proposition \ref{prop:LND-filtration} proves that \ref{enum:thm:LND-filtration-2} implies \ref{enum:thm:LND-filtration-1} so we focus on proving \ref{enum:thm:LND-filtration-1} implies \ref{enum:thm:LND-filtration-2}. 

        Therefore, we start by assuming that the filtration of reproducing kernel Hilbert spaces satisfies \ref{enum:thm:LND-filtration-1}. Following on from Example \ref{example:HilbertFiltration2}, we conclude that this is equivalent to 
        \begin{equation*}
            \Big( \fWIC_t, \langle \cdot, \cdot \rangle_{\fWIC_t} \Big)_{t\in [0,T]} 
                \quad\mbox{being isomorphic to} \quad
                \Big( L_{t, T}^2(\bR^d), \big\langle \cdot, \cdot \big\rangle_t \Big)_{t\in [0,T]}. 
        \end{equation*}
        In particular, this means that for every $t\in [0,T]$ there exists $L(t,\cdot) \in \fWIC$ such that
        \begin{equation*}
            W_t^*:=\int_0^t L(t, s) dZ_s
            \quad\mbox{and}\quad
            \bE\Big[ \big\langle W_t^*,  W_s^* \big\rangle_{\bR^d} \Big] = d \cdot \Big\langle \1_{[0,t]} , \1_{[0,s]} \Big\rangle_{L^2\big([0,T]; \bR\big)}
        \end{equation*}
        and we conclude that the Gaussian process $(W^*_t)_{t\in [0,T]}$ has the same covariance as a Brownian motion. 

        Similarly, we also obtain that for every $t\in [0,T]$ there exists a $K(t, \cdot) \in L^2\big( [0,T]; \lin(\bR^d, \bR^d) \big)$ such that
        \begin{equation*}
            \bE\Big[ \big\langle Z_t, Z_s \big\rangle_{\bR^d} \Big] = R(t, s) = \int_0^T \Big\langle K(t, r), K(s, r) \Big\rangle_{\lin(\bR^d, \bR^d)} dr. 
        \end{equation*}
        Further, since $\1_{[0,t]} \in \fWIC_t$ and the restriction of $\1_{[0,t]}$ to $\fWIC_s$ is just $\1_{[0,s]}$, we conclude that
        \begin{equation*}
            K(t, \cdot) \in L^2\big( [0,t]; \lin(\bR^d, \bR^d) \big)
            \quad \mbox{and}\quad
            \int_t^T K(t, s) ds = 0.
        \end{equation*}
        Hence, we conclude that $K$ is a Volterra kernel. 

        As $W$ is a Brownian motion, we define the new Gaussian process $(Z_t^*)_{t\in [0,T]}$ where
        \begin{equation*}
            Z_t^*: = \int_0^t K(t, s) dW_s^*. 
        \end{equation*}
        By direct calculation, we conclude that 
        \begin{equation*}
            \bE\Big[ \big\langle Z_t^*, Z_t^* \big\rangle_{\bR^d} \Big] = R(t, s)
        \end{equation*}
        so that $(Z_t^*)_{t\in [0,T]}$ has the same covariance as the Gaussian process $(Z_t)_{t\in [0,T]}$. Hence, we conclude that $(Z_t)_{t\in [0,T]}$ admits a Volterra representation. 
    \end{proof}
    
    \begin{definition}
    	\label{definition:SLND}
    	We say that a Gaussian measure $\gamma$ associated to the abstract Wiener space 
        \begin{equation*}
            (\cC_{0, T}^d, \RKHS_T, \BSi_T)
        \end{equation*}
        is \emph{securely locally non-deterministic} if the filtration of Hilbert spaces
    	\begin{equation*}
    		\Big( \RKHS_t, \big\langle \cdot, \cdot \big\rangle_{t} \Big)_{t\in [0,T]}
    		\quad \mbox{is isomorphic to}\quad
    		\Big( L_{t, T}^2(\bR^d), \big\langle \cdot, \cdot \big\rangle_t \Big)_{t\in [0,T]}. 
    	\end{equation*}
    \end{definition}
    
    \begin{remark}
    	The authors endeavoured to find a direct connection between Definition \ref{definition:SLND} and more common definitions of \emph{Local Non-determinism} such as for every $s\in [0,T]$ and $t\in (s, T]$, 
    	\begin{equation}
    		\label{eq:rem:SLND}
    		\bE\bigg[ \Big( Z_t - \bE\big[ Z_t\big| \cF_s\big] \Big)^2 \bigg]>0. 
    	\end{equation}
    	For a more detailed exploration of some non-equivalent definitions of \emph{local non-determinism}, we refer the reader to \cite{Xiao2006Properties}. We conjecture that \eqref{eq:rem:SLND} is not equivalent to \ref{definition:SLND}, but we leave this as an open problem. The \emph{local non-determinism} of fractional Brownian motion has found a lot of interest recently in the context of regularisation by noise, see for example \cites{Galeati2022Prevalence, Galeati2022Solution}. These techniques rely on a scaling property for estimates of the form \eqref{eq:rem:SLND} which is quite different from our setting. None the less, we do feel that it is worth exploring this connection in more detail. 
    \end{remark}

    \subsection{Girsanov's Theorem}
    \label{subsection:Girsanov}

    Having established a class of Gaussian processes that can be transformed via a Volterra kernel into a Brownian motion bijectively, our next goal is to understand how the existence of the fundamental martingale allows us to prove additional properties for such Gaussian measures using martingale techniques that would otherwise only apply to the law of Brownian motion. 
    
    The following classical result can be found in \cite{bogachev1998gaussian}:
    \begin{theorem}[Cameron Martin Theorem]
        \label{theorem:Cameron-Martin}
        Let $(\cX, \cB, \gamma)$ be a probability space with a Gaussian measure and let $\RKHS$ be the reproducing kernel Hilbert space. Then for any $h\in \RKHS$, 
        \begin{equation*}
            \int_{\cX} F(x + h) d\gamma(x) = \int_{\cX} F(x) \exp\Big( \delta(h) - \tfrac{\|h\|_{\RKHS}^2}{2} \Big) d\gamma(x). 
        \end{equation*}
    \end{theorem}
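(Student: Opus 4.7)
The plan is to verify the stated identity by matching characteristic functionals of the two candidate measures on $\cX$ that the two sides define, and then to extend from complex exponentials to general integrable $F$ by a standard monotone class argument. First I would observe that $\cE(h) := \exp\big( \delta(h) - \tfrac{1}{2}\|h\|_{\RKHS}^2 \big)$ has unit $\gamma$-expectation, since $\delta(h)$ is a centered real Gaussian of variance $\|h\|_{\RKHS}^2$ — this is the defining property of the isonormal Gaussian process associated with $\RKHS$, transported through the isomorphism $\scI: \fWIC \to \RKHS$ introduced in Section \ref{subsection:VolterraProcess}. Consequently $d\gamma_h := \cE(h)\, d\gamma$ defines a probability measure on $\cX$, and writing $\tau_h \gamma$ for the pushforward of $\gamma$ under the translation $x \mapsto x + h$, the theorem reduces to showing $\gamma_h = \tau_h \gamma$. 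Since a Borel (equivalently cylindrical) probability measure on the abstract Wiener space $\cX$ is determined uniquely by its characteristic functional on $\cX^*$, it suffices to verify
\begin{equation*}
    \int_\cX e^{i \ell(x)}\, d\gamma_h(x) = \int_\cX e^{i \ell(x + h)}\, d\gamma(x), \qquad \ell \in \cX^*.
\end{equation*}

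The key computation uses the canonical embedding $j : \cX^* \hookrightarrow \RKHS$, $\ell \mapsto \int_\cX \ell(x)\, x\, d\gamma(x)$, which is an isometry onto a dense subspace of $\RKHS$ and for which $\delta\big( j(\ell) \big) = \ell(\cdot)$ holds $\gamma$-almost surely. For $\ell \in \cX^*$ and $h \in \RKHS$ the pair $\big( \ell(x), \delta(h) \big)$ is then jointly centered Gaussian under $\gamma$ with marginal variances $\|j(\ell)\|_{\RKHS}^2$ and $\|h\|_{\RKHS}^2$ and covariance $\bE_\gamma\big[ \ell(\cdot)\, \delta(h) \big] = \big\langle j(\ell), h \big\rangle_{\RKHS}$. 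Evaluating the complex moment generating function of this two-dimensional Gaussian at the argument $(i, 1)$ and multiplying by $e^{-\|h\|_{\RKHS}^2 / 2}$ yields
\begin{equation*}
    \int_\cX e^{i\ell(x)}\, \cE(h)\, d\gamma(x) = \exp\Big( -\tfrac{1}{2}\|j(\ell)\|_{\RKHS}^2 + i\, \big\langle j(\ell), h \big\rangle_{\RKHS} \Big),
\end{equation*}
which coincides with the characteristic functional of $\tau_h \gamma$, namely $e^{i\ell(h)} \cdot e^{-\|j(\ell)\|_{\RKHS}^2 / 2}$, once we identify $\ell(h) = \langle j(\ell), h\rangle_{\RKHS}$.

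The extension from $F = e^{i\ell(\cdot)}$ to arbitrary $F$ for which both sides are defined is routine: such exponentials form a multiplicative class separating points of $\cX$, hence determine a finite Borel measure uniquely, and a standard Dynkin / monotone class argument upgrades the identity first to all bounded Borel $F$ and then to $F \in L^1(\tau_h \gamma)$ by monotone approximation. The step I expect to be the most delicate is the reproducing identity $\ell(h) = \langle j(\ell), h\rangle_{\RKHS}$ for an \emph{arbitrary} $h \in \RKHS$: it is immediate when $h \in j(\cX^*)$, but for a general element of the Hilbert space completion one must show that the evaluation $h \mapsto \ell(h)$ extends continuously from $j(\cX^*)$ to all of $\RKHS$. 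This continuity is precisely the content of the abstract Wiener triple $(\cX, \RKHS, \BSi)$, in which $\RKHS$ embeds continuously into $\cX$ via the compact operator $\BSi$; combined with the density of $j(\cX^*)$ in $\RKHS$, it closes the argument.
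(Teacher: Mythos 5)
Your proof is correct. Note, however, that the paper does not prove Theorem \ref{theorem:Cameron-Martin} at all: it is stated as a classical result with a citation to \cite{bogachev1998gaussian}, so there is no in-paper argument to compare against. Your characteristic-functional argument --- reducing the identity to $\gamma_h = \tau_h\gamma$, computing $\int e^{i\ell}\,\cE(h)\,d\gamma$ via the joint Gaussianity of $(\ell(\cdot),\delta(h))$, identifying $\ell(h)=\langle j(\ell),h\rangle_{\RKHS}$ through the reproducing property of the Cameron--Martin space, and closing with a monotone class argument --- is the standard proof found in that reference, and you correctly flag the only genuinely delicate point, namely the continuous extension of the evaluation $h\mapsto\ell(h)$ from $j(\cX^*)$ to all of $\RKHS$.
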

    Theorem \ref{theorem:Cameron-Martin} allows us to study the linear translation of Gaussian measures, in this case by a Hilbert space element $h$. However, it is much more practical to want to consider non-linear transformations of a Gaussian measure. We refer the interested reader to \cite{nualart2006malliavin}*{Chapter 4} for an impressive collection of results on the translation of the Wiener measure. Our focus is instead on establishing a Girsanov-type result for Gaussian processes that are securely locally non-deterministic. 

    For every $t\in [0, T]$, we denote $\fP_t: \cC_T^d \to \cC_t^d$ to be the canonical projection and denote the $\sigma$-algebra filtration on $\cC_T^d$ by
    \begin{equation*}
        \cF_t:= \sigma\Big( (\fP_t)^{-1}[A]: A \in \cC_t^d \Big). 
    \end{equation*}
    
    We now come to the first main result of this work:
    \begin{theorem} 
        \label{theorem:ap:girsanov}
        Let $K:[0,T] \to L^2\big( [0,T]; \lin(\bR^d, \bR^d) \big)$ be a Volterra kernel that satisfies Assumption \ref{assumption:VolterraK} and let $(\RKHS_t)_{t\in [0,T]}$ be a filtration of Hilbert spaces defined as in Equation \eqref{eq:prop:LND-filtration-H}. Let $\BSi_t:\RKHS_t \to \cC_{0,T}^d$ be a compact embedding and let $\gamma$ be the Gaussian measure associated with the abstract Wiener space $(\cC_{0, T}^d, \RKHS_T, \BSi_T)$. Let $P_0 \in \cP_2( \bR^d )$ and let denote $P^* = P_0 \times \gamma$. 
        
        Let $b : [0,T] \times \cC_T^d \to \bR^d$ be progressively measurable and suppose that 
        \begin{equation*}
            \Big\| \int_0^\cdot b\big( s, X[s] \big) ds \Big\|_{\RKHS_T}< \infty \quad \mbox{$P^*$-almost surely. }
        \end{equation*}
        Further, suppose that on the filtered probability space $\big( \cC_T^d, \cB(\cC_T^d), (\cF_t)_{t\in [0,T]}, P^* \big)$ the process
        \begin{equation}
            \label{eq:stochasticExpon}
            (t, X) \mapsto \cZ_t\Big[ \int_0^\cdot b\big( s, X[s] \big) ds \Big] := \exp\Bigg( \delta\bigg( \Pi_t\Big[ \int_0^{\cdot} b\big(s, X[s] \big) ds \Big] \bigg) - \tfrac{1}{2} \Big\| \int_0^\cdot b\big(s, X[s] \big) ds \Big\|_{\RKHS_t}^2 \Bigg)
        \end{equation}
        (where $\delta$ is the Malliavin divergence) is a martingale that satisfies that
        \begin{equation}
            \label{eq:theorem:ap:girsanov-Martingale}
            \bE^{P^*}\bigg[ \cZ_T\Big[ \int_0^\cdot b\big( s, X[s] \big) ds \Big] \bigg] = 1. 
        \end{equation}
        Let $P$ be the probability measure defined by
        \begin{equation*}
            \frac{dP}{dP^*} \bigg|_{\cF_t} = \cZ_t\Big[ \int_0^\cdot b\big( s, X[s] \big) ds \Big]. 
        \end{equation*}
        Then the law of the process
        \begin{equation*}
            X_t - \int_0^t b\big( s, X[s] \big) ds \quad\mbox{under $P$}
        \end{equation*}
        is the same as the law of the canonical process $X$ under $P^*$. 
    \end{theorem}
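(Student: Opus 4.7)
The plan is to reduce the statement to the classical Girsanov theorem for Brownian motion by invoking the fundamental martingale. On the canonical space, the projection $X_t = X_0 + Z_t$ under $P^*$ has $Z$ distributed as the Gaussian Volterra process with kernel $K$. By Lemma \ref{lemma:Fundamental-Wiener}, the process
\[
W^*_t := \int_0^t L(t,s)\, dX_s = \int_0^t L(t,s)\, dZ_s
\]
is a $d$-dimensional Brownian motion under $P^*$ generating the same filtration as $Z$. Thus every object written in terms of the reproducing kernel Hilbert space can be rewritten as a classical stochastic object against $W^*$.

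The first step is to rewrite the exponent in \eqref{eq:stochasticExpon} in Brownian form. Since $b$ is progressively measurable and $\|\int_0^\cdot b(s,X[s])\,ds\|_{\RKHS_T}<\infty$ $P^*$-a.s., Proposition \ref{prop:Existence_Q} produces the process $Q^b(s,X[s])$ with
\[
\int_0^t b(s,X[s])\,ds = \int_0^t K(t,s)\, Q^b(s,X[s])\,ds, \qquad \Big\|\int_0^\cdot b(s,X[s])\,ds\Big\|_{\RKHS_t}^{2} = \int_0^t |Q^b(s,X[s])|^2\,ds.
\]
Using Proposition \ref{proposition:Martingale-Ust}, the martingale $t\mapsto \delta(\Pi_t[\int_0^\cdot b(s,X[s])\,ds])$ is $\bF^{W^*}$-adapted with quadratic variation equal to $\int_0^t |Q^b(s,X[s])|^2 ds$; its restriction to step integrands identifies it with the It\^o integral $\int_0^t Q^b(s,X[s])\,dW^*_s$, and density extends this identification. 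Consequently \eqref{eq:stochasticExpon} equals the classical Dol\'eans--Dade exponential $\cE_t(Q^b\cdot W^*)$.

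The second step is the classical Girsanov theorem on the filtered probability space $(\cC_T^d, (\cF_t)_{t\in[0,T]}, P^*)$, where $W^*$ is a Brownian motion and $\cZ_t = \cE_t(Q^b \cdot W^*)$ is a martingale with unit expectation by \eqref{eq:theorem:ap:girsanov-Martingale}. Define $P$ by $dP/dP^*|_{\cF_t} = \cZ_t$. Then
\[
\widetilde W_t := W^*_t - \int_0^t Q^b(s,X[s])\,ds
\]
is a $d$-dimensional Brownian motion under $P$, adapted to $(\cF_t)_{t\in[0,T]}$, and moreover $X_0$ keeps its $P_0$-distribution under $P$ since $\cZ_0=1$, and is independent of $\widetilde W$ for the same reason $X_0$ was independent of $W^*$ under $P^*$.

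The final step is to invert the Volterra transformation. Under $P^*$ we have the identity $X_t = X_0 + \int_0^t K(t,s)\,dW^*_s$ in law. Using Proposition \ref{prop:Existence_Q} and $W^* = \widetilde W + \int_0^\cdot Q^b(s,X[s])\,ds$, the translated canonical process satisfies, $P$-almost surely,
\[
X_t - \int_0^t b(s,X[s])\,ds = X_0 + \int_0^t K(t,s)\,dW^*_s - \int_0^t K(t,s)\, Q^b(s,X[s])\,ds = X_0 + \int_0^t K(t,s)\, d\widetilde W_s.
\]
Since $(X_0,\widetilde W)$ under $P$ has the same joint distribution as $(X_0,W^*)$ under $P^*$, the right-hand side has the same law under $P$ as $X_0+\int_0^t K(t,s)\,dW^*_s = X_t$ has under $P^*$, which is the claim.

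\textbf{Main obstacle.} The conceptually delicate point is the identification $\delta(\Pi_t[\int_0^\cdot b\,ds]) = \int_0^t Q^b\,dW^*_s$, i.e.\ matching the Malliavin divergence against the Gaussian Volterra measure with a classical It\^o integral against the associated Brownian motion. This rests on Assumption \ref{assumption:VolterraK} through the isometries in \eqref{eq:CovarianceCommute} and the projection operator of Definition \ref{definition:RKHSprojection}, together with Proposition \ref{proposition:Martingale-Ust} to guarantee the divergence is a genuine $\bF^{W^*}$-martingale with the correct quadratic variation; once this bridge between the Malliavin and Brownian frameworks is in place, the remainder of the argument is a bookkeeping exercise reducing the statement to classical Girsanov.
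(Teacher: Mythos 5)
Your proposal is correct and follows essentially the same route as the paper's proof: transform the canonical process into the Brownian motion $W^*$ via the kernel $L$, identify the stochastic exponential \eqref{eq:stochasticExpon} with the classical Dol\'eans--Dade exponential of $\int_0^\cdot Q^b(s,X[s])\,dW^*_s$, apply classical Girsanov, and invert via $K$ using Proposition \ref{prop:Existence_Q}. If anything, you are more explicit than the paper about the one genuinely delicate step, namely the identification $\delta\big(\Pi_t[\int_0^\cdot b\,ds]\big) = \int_0^t Q^b(s,X[s])\,dW^*_s$ and the preservation of the joint law of $(X_0,\widetilde W)$, both of which the paper's proof leaves implicit.
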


    Notice that Theorem \ref{theorem:ap:girsanov*} follows as an example of Theorem \ref{theorem:ap:girsanov}. Further, note that the statement of Theorem  \ref{theorem:ap:girsanov*} relies on the projection mapping $\Pi$ defined in Definition \ref{definition:RKHSprojection}. Without the Volterra kernel $K$ satisfying Assumption \ref{assumption:VolterraK}, this projection mapping would not be 'rich enough' for this result to hold. 
    \begin{proof}[Proof of Theorem \ref{theorem:ap:girsanov}]
        Let us start by observing that since for every $t \in [0,T]$ we have that
        \begin{equation*}
            \int_0^\cdot b(s, X[s]) ds \in \RKHS_t \quad\mbox{for $P^*$-almost everywhere,}
        \end{equation*}
        we can write
        \begin{align*}
            &\int_0^{\cdot \wedge t} K(\cdot, s) Q_s^b\big(X[s] \big) ds = \Pi_t\Big[ \int_0^\cdot b\big(s, X[s] \big) ds \Big] 
            \quad \mbox{and}
            \\
            &\Big\| \int_0^\cdot K(\cdot, s) Q_s^b\big(X[s] \big) ds \Big\|_{\RKHS_t} = \bigg\| \Pi_t\Big[ \int_0^\cdot b\big(s, X[s] \big) ds \Big] \bigg\|_{\RKHS_T}
        \end{align*}
        Therefore, to align notation with the previous results we henceforward write $h(X) = \int_0^\cdot b(s, X[s]) ds$. Thus Equation \eqref{eq:stochasticExpon} can be rewritten as
        \begin{equation*}
            t\mapsto \cZ_t[h] = \exp\Big( \delta\big( h\big) - \tfrac{\|h\|_{\RKHS_t}}{2} \Big)
        \end{equation*}
        and thanks to Equation \eqref{eq:theorem:ap:girsanov-Martingale} we conclude that $\cZ_t[h]$ is a $P^*$-martingale. 

        Thanks to Assumption \ref{assumption:VolterraK}, the process
        \begin{equation*}
            t\mapsto W_t^*(X):= X_0 + \int_0^t L(t, s) dX_s
        \end{equation*}
        is a Brownian motion so that under $P$ the process
        \begin{equation*}
            t\mapsto W_t^*(X) - \int_0^t Q_s^b\big( X[s] \big) ds
        \end{equation*}
        is a Brownian motion. Finally, thanks to Proposition \ref{prop:Existence_Q}, we conclude that the process
        \begin{equation*}
            t\mapsto X_t - X_0 - \int_0^t b\big( s, X[s] \big) ds = \int_0^t K(t, s) dW_s^* - \int_0^t K(t, s) Q_s^b\big( X[s] \big) ds
        \end{equation*}
        is a fractional Brownian motion under $P$. 
    \end{proof}

    In practice, proving that the local martingale defined in Equation \eqref{eq:stochasticExpon} is in fact a martingale is as challenging as it is in the classical setting. In the next result, we provide an adaption of Novikov's famous condition from \cite{karatzasShreve}*{Corollary 3.5.14}: 
    \begin{proposition}[Novikov's Condition]
        \label{proposition:Novikov}
        Let $K:[0,T] \to L^2\big( [0,T]; \lin(\bR^d, \bR^d) \big)$ be a Volterra kernel that satisfies Assumption \ref{assumption:VolterraK} and let $(\RKHS_t)_{t\in [0,T]}$ be a filtration of Hilbert spaces defined as in Equation \eqref{eq:prop:LND-filtration-H}. Let $b:[0,T] \times \cC_T^d \to \bR^d$ be progressively measurable suppose there exists a monotone increasing sequence $(t_n)_{n\in \bN}$ taking values in $[0,T]$ such that $t_n \uparrow \infty$ and for every $n\in \bN$
        \begin{equation}
            \label{eq:proposition:Novikov}
            \bE^{P^*}\Bigg[ \exp\bigg( \Big\| \Pi_{t_n, t_{n+1}}\Big[ \int_0^\cdot b\big(s, X[s] \big) ds\Big] \Big\|_{\RKHS_T}^2 \bigg) \Bigg]< \infty
        \end{equation}
        where $\Pi_{t_n, t_{n+1}} = \Pi_{t_{n+1}} - \Pi_{t_n}$ and $\Pi_t$ is the operator defined in Definition \ref{definition:RKHSprojection}. 

        Then
        \begin{equation*}
            t\mapsto \cZ_t\Big[ \int_0^\cdot b\big( s, X[s]\big) ds\Big] \quad \mbox{is an $\cF_t$-martingale. }
        \end{equation*}
    \end{proposition}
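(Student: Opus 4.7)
The plan is to reduce this to the classical piecewise Novikov condition of Karatzas--Shreve by expressing $\cZ_t$ as an ordinary It\^o stochastic exponential with respect to the fundamental Brownian motion $W^*$.

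First I would identify the stochastic exponential in classical terms. By Lemma \ref{lemma:Fundamental-Wiener}, under $P^*$ the process $W_t^* = \int_0^t L(t,s)\, dX_s$ is a Brownian motion generating the same filtration as $X$. By Proposition \ref{prop:Existence_Q} and the It\^o-type formula \eqref{eq:J-formula}, the Malliavin divergence and the squared RKHS norm translate into
\begin{equation*}
    \delta\bigg( \Pi_t\Big[ \int_0^\cdot b\big(s,X[s]\big)ds\Big]\bigg) = \int_0^t Q^b_s\big(X[s]\big)\, dW_s^*, \qquad \Big\| \int_0^\cdot b\big(s,X[s]\big)ds\Big\|_{\RKHS_t}^2 = \int_0^t \big| Q^b_s\big(X[s]\big)\big|^2 ds,
\end{equation*}
so that $\cZ_t[\int_0^\cdot b(s,X[s])ds]$ is precisely the Dol\'eans--Dade exponential $\cE(M)_t$ of the continuous local martingale $M_t := \int_0^t Q^b_s dW_s^*$ with quadratic variation $\langle M\rangle_t = \int_0^t |Q^b_s|^2 ds$. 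In particular the hypothesis \eqref{eq:proposition:Novikov} reads
\begin{equation*}
    \bE^{P^*}\Big[ \exp\big( \langle M\rangle_{t_{n+1}} - \langle M\rangle_{t_n}\big)\Big] < \infty \quad \mbox{for every } n\in \bN,
\end{equation*}
which is strictly stronger than the classical piecewise Novikov threshold $\tfrac{1}{2}$.

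Next I would apply the classical piecewise Novikov argument (see \cite{karatzasShreve}*{Corollary 3.5.14}). The continuous local martingale $M$ admits a localising sequence so that $\cE(M)$ is a non-negative supermartingale, and to upgrade it to a martingale it suffices to prove $\bE^{P^*}[\cE(M)_T]=1$. On each interval $[t_n, t_{n+1}]$ the classical Novikov criterion applies to the shifted process $(M_{t_n + s}-M_{t_n})_{s \in [0, t_{n+1}-t_n]}$ conditionally on $\cF_{t_n}$, yielding
\begin{equation*}
    \bE^{P^*}\Big[ \cE(M)_{t_{n+1}} \Big| \cF_{t_n}\Big] = \cE(M)_{t_n}.
\end{equation*}
Iterating the tower property across the partition $\{t_n\}_{n\in \bN}$ gives $\bE^{P^*}[\cE(M)_{t_N}]=1$ for every finite $N$, and since $\cE(M)$ is a non-negative supermartingale with $t_n\uparrow T$ a standard uniform integrability / Fatou sandwich passes to the limit $\bE^{P^*}[\cE(M)_T]=1$.

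The main obstacle I anticipate is the first reduction, namely the identification $\delta(\Pi_t[h]) = \int_0^t Q^b_s dW_s^*$. One must check that the Malliavin divergence associated to the abstract Wiener space $(\cC_{0,T}^d, \RKHS_T, \BSi_T)$ coincides, after transport through the isometric isomorphism $\scJ: \cV_T \to \fWIC_T$ and Assumption \ref{assumption:VolterraK}, with It\^o integration against the fundamental Brownian motion $W^*$ on each $[0,t]$. This is really the content of Proposition \ref{proposition:Martingale-Ust} combined with Lemma \ref{lemma:adaptedness=Mall} and Proposition \ref{prop:Existence_Q}; once this identification is laid out cleanly, the remainder is a transcription of the classical argument.
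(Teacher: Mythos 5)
Your proposal is correct and follows the same underlying strategy as the paper: a piecewise Novikov argument over the partition $(t_n)$. The difference is in how each subinterval is handled. The paper stays entirely in the Malliavin/RKHS language: it uses the orthogonality of the projections $\Pi_{t_n}$ and $\Pi_{t_n,t_{n+1}}$ to split $\|\Pi_{t_{n+1}}[h]\|_{\RKHS_T}^2$ additively and hence to factorise the stochastic exponential multiplicatively as $\cZ_t[h]=\prod_n \cZ_t[h_n(X)]$, each factor being a martingale by the exponential moment bound \eqref{eq:proposition:Novikov} on that piece. You instead transport the whole object to the classical It\^o setting, writing $\delta(\Pi_t[h])=\int_0^t Q^b_s\,dW_s^*$ and $\|h\|_{\RKHS_t}^2=\int_0^t|Q^b_s|^2ds$ via Lemma \ref{lemma:Fundamental-Wiener}, Proposition \ref{prop:Existence_Q} and the equality of filtrations $\bF^Z=\bF^{W^*}$, and then invoke \cite{karatzasShreve}*{Corollary 3.5.14} with the conditional Novikov criterion and the tower property. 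These are equivalent mechanisms, and your version actually makes explicit a step the paper glosses over: the paper's assertion that the moment bound on $[0,t_1]$ ``implies'' the martingale property there is itself an application of the classical Novikov criterion after exactly the identification you describe. Your observation that the hypothesis (lacking the factor $\tfrac12$) is stronger than the classical threshold is also correct. One caveat shared with the paper's own proof: if the sequence $(t_n)$ does not reach $T$ in finitely many steps, the final passage $\bE[\cE(M)_{t_n}]=1 \Rightarrow \bE[\cE(M)_T]=1$ requires uniform integrability of $(\cE(M)_{t_n})_n$ rather than Fatou alone (Fatou only gives $\le 1$); neither your sketch nor the paper's ``induction on $n$'' fully addresses this, but it is a standard point and not specific to your route.
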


    \begin{proof}
        Firstly, on the interval $[0,t_1]$ Equation \eqref{eq:proposition:Novikov} implies that
        \begin{equation*}
            \bE^{P_{t_1}^*} \Bigg[ \exp\bigg( \Big\| \int_0^\cdot b\big( s, X[s] \big) ds \Big\|_{\RKHS_{t_1}}^2 \bigg) \Bigg] < \infty
        \end{equation*}
        so that we conclude that for any $t\in [0,t_1]$ the process
        \begin{equation*}
            t\mapsto \cZ_t\Big[ \int_0^\cdot b\big( s, X[s] \big) ds \Big] 
            \quad \mbox{is a martingale and}\quad
            \bE^{P^*}\bigg[ \cZ_t\Big[ \int_0^\cdot b\big( s, X[s] \big) ds \Big] \bigg] = 1. 
        \end{equation*}
        Next, for any choice of $n\in \bN$ we denote
        \begin{equation*}
            h_n(X) =  \Pi_{t_n, t_{n+1}} \bigg[ \int_0^\cdot b\big( s, X[s] \big) ds \bigg]
        \end{equation*}
        and we conclude that on the interval $[0,t_{n+1}]$ Equation \eqref{eq:proposition:Novikov} implies that 
        \begin{align*}
            &t\mapsto \cZ_{t}\big[ h_n(X) \big] \quad \mbox{is a martingale and}
            \\
            &\bE^{P^*}\Big[ \cZ_{t}\big[ h_n(X) \big] \Big| \cF_{s}^Z \Big] = 
            \left\{ 
            \begin{aligned}
                &1
                \quad &\mbox{for any $s\in [0,t_n]$,}&
                \\
                &\cZ_{s}\big[ h_n(X) \big] \quad &\mbox{for any $s\in [t_n, t_{n+1}]. $}&
            \end{aligned}
            \right.
        \end{align*}
        Next, using the orthogonality of the Hilbert spaces projections $\Pi_{t_n}$ and $\Pi_{t_n, t_{n+1}}$ we obtain that for any $h\in \RKHS_{T}$
        \begin{equation*}
            \frac{\big\| h \big\|_{\RKHS_{t_{n+1}}}^2}{2} = \frac{\big\| \Pi_{t_{n}}[h] \big\|_{\RKHS_{t_{n+1}}}^2}{2} + \frac{\big\| \Pi_{t_n,t_{n+1}}[h] \big\|_{\RKHS_{t_{n+1}}}^2}{2} 
        \end{equation*}
        so that we obtain that for $t\in [t_1, t_2]$
        \begin{align*}
            t\mapsto& \cZ_t\big[ h(X) \big] = \exp\Big( \delta\big( \Pi_t[h]\big) - \tfrac{ \big\| \Pi_t[h] \big\|_{\RKHS_T}^2}{2} \Big)
            \\
            &= \exp\Big( \delta\big( \Pi_{t_1\wedge t}[h]\big) - \tfrac{ \big\| \Pi_{t_1 \wedge t}[h] \big\|_{\RKHS_T}^2}{2} \Big) \cdot \exp\Big( \delta\big( (\Pi_{t_2\wedge t}-\Pi_{t_1\wedge t})[h]\big) - \tfrac{ \big\| (\Pi_{t_2\wedge t}-\Pi_{t_1\wedge t})[h] \big\|_{\RKHS_T}^2}{2} \Big)
            \\
            &= \cZ_t\big[ h_1(X) \big] \cdot \cZ_t\big[ h_2(X) \big]
        \end{align*}
        is a martingale. By induction on $n$ and using that $t_n \uparrow \infty$, we conclude that $\cZ_t[h]$ is a martingale over the whole interval $t\in [0,T]$. 
    \end{proof}

    \subsection{Examples of Gaussian processes}
    
    The following was first observed in \cite{Mishura2020Gaussian} and includes fractional Brownian motion:
    \begin{example}
        \label{example:Mishura}
        Motivated by the case $H>\tfrac{1}{2}$, let $K:[0,T] \to L^2\big( [0,T]; \bR\big)$ be a Volterra kernel of the form
		\begin{equation}
			\label{eq:example:Mishura-1}
			K(t, s) = a(s) \cdot \int_s^t b(u) \cdot c(u-s) du
		\end{equation}
		where $a, b, c:[0,T] \to \bR$ satisfy that
		\begin{enumerate}
			\item The functions $a\in L^p([0,T])$, $b\in L^q([0,T])$ and $c\in L^r([0,T])$ for $p \in [2, \infty]$, $q\in [1, \infty]$ and $r \in [1, \infty]$ such that
			\begin{equation*}
				\tfrac{1}{p} + \tfrac{1}{q} + \tfrac{1}{r} \leq \tfrac{3}{2}. 
			\end{equation*}
			\item The function $c\in L^1([0,T])$ forms a Sonine pair with $h\in L^1([0,T])$, that is 
			\begin{equation*}
				\int_0^t c(s) \cdot h(t-s) ds = 1 
				\quad
				\forall t\in (0, T]. 
			\end{equation*}
			\item The functions $a$ and $b$ are positive almost everywhere on $[0,T]$. 
			\item The functions $a^{-1} \in C^1([0,T])$, $d:=b^{-1} \in C^2([0,T])$ and either
			\begin{enumerate}
				\item We have that $d(0) = d'(0) = 0$. 
				\item The function $a^{-2} \cdot h \in C^1([0,T])$. 
			\end{enumerate}
		\end{enumerate}

        Then the Volterra kernel $K$ satisfies Assumption \ref{assumption:VolterraK} and the Volterra kernel $L:[0,T] \to \fWIC$ is equal to
        \begin{equation*}
            L(t, s) = \frac{h(t-s)}{a(t) b(s)} + \frac{1}{b(s)} \int_s^t \frac{a'(v) h(v-s)}{a(v)^2} dv. 
        \end{equation*}
        In particular
        \begin{align*}
            \scJ^*\big[ \1_{[0,t]} \big](s) =& K(t, s) = K(t, s) - K(s, s)
            = \int_s^T \frac{\partial K}{\partial u} (u, s) \1_{[0,t]}(u) du
        \end{align*}
        so that
        \begin{align}
            \label{eq:sonine1}
			\scJ^*\big[& L(t, \cdot) \big](s) = \int_s^t \frac{\partial K}{\partial u}(u, s) \cdot L(t, u) du
			\\
            \nonumber
			=& \scJ^*\bigg[ \1_{[0,t]} \cdot \frac{p(t) \cdot h(t-\cdot)}{b(\cdot)} \bigg](s) - \scJ^*\bigg[ \frac{1}{b(\cdot )} \int_{\cdot }^t p'(v) \cdot h(v - \cdot) ds \bigg](s)
			\\
            \nonumber
			=& \int_s^t a(s) \cdot b(u) \cdot c(u-s) \cdot\frac{p(t) \cdot h(t-u)}{b(u)} du
			- 
			\int_s^t a(s) \cdot c(u-s) \cdot \int_u^t p'(v) \cdot h(v-u) dv du
			\\
			\nonumber
            =& a(s) p(t) \int_s^t c(u-s) h(t-u) du
			-
			\int_s^t a(s) p'(v) \int_s^v c(u-s) h(v-u) du dv
			\\
            \nonumber
			=& a(s) \cdot p(t) \1_{[0,t]}(s) - a(s) \cdot \big[ p(t) - p(s) \big] \1_{[0,t]}(s) = \1_{[0,t]}(s). 
		\end{align}
    \end{example}

    \begin{example}
        \label{example:Mishura-2}
        Extending the ideas of \cite{Mishura2020Gaussian} and motivated by the case $H<\tfrac{1}{2}$, consider a Volterra kernel of the form
        \begin{equation*}
            K(t, s) = a(s) \bigg( c(t-s) b(t) - \int_s^t c(u-s) \frac{d}{du}\Big[ b(u) \Big] du \bigg)
        \end{equation*}
        Observe that
        \begin{equation*}
            \scJ^*\big[ \1_{[0,t]} \big](s) = K(t, s) = \int_0^T K(u, s) \delta_t(u) du = - \int_s^T K(u, s) \frac{\partial \1_{[0,t]}}{\partial u}(u) du
        \end{equation*}
        so that
        \begin{align}
            \label{eq:sonine2}
            \scJ^*\big[ L(t, \cdot) \big](s) = -\int_s^T K(u, s) \frac{\partial L}{\partial u} (t, u) du = \1_{[0,t]}(s)
        \end{align}
    \end{example}

    Equation \eqref{eq:sonine1} and \eqref{eq:sonine2} can equally be interpreted as the existence of a \emph{Sonine kernel} for the Volterra kernel, see \cite{Sonine1884Sur}.

    \section{Locally interacting processes and the 2-MRF property}
    \label{section:2MRF}

    Our next goal it to study stochastic differential equations of the form
    \begin{equation}
        \label{eq:locally-interaction}
        dX_t^u = b_u\Big(t, X^u[t], X^{N_u}[t] \Big) dt + dZ_t^u, 
        \qquad 
        u \in V,
        \qquad 
        (X_0^u)_{u\in V} \sim \mu_0
    \end{equation}
    where $(V, E) \in \cG$ and recall $N_u=\{v \in V: \{u, v\} \in E\}$. We want to study the existence and uniqueness of such countably infinite collections of processes, along with their Markov Random Field properties. 
    
    Firstly, in Section \ref{subsection:WeakExist+Uniq} we prove a \emph{weak existence and uniqueness} result for such collections of stochastic differential equations driven by additive Gaussian processes. The key challenge here is that each equation is strongly correlated with its neighbours and the number of equations is taken to be countably infinite. This relies on Theorem \ref{theorem:ap:girsanov} which we established previously. 
    
    In Section \ref{subsection:MRF}, we prove the Markov Random Field property for collections of stochastic differential equations of the form \eqref{eq:locally-interaction}. Firstly, we consider the finite graph case and prove Theorem \ref{theorem:MRF-1} via a clique factorisation. When the graph is countably infinite, this approach does not work and instead we need to consider appropriate truncation and convergence arguments where we prove Theorem \ref{theorem:MRF-2*}. 

    \subsection{Weak existence and uniqueness}
    \label{subsection:WeakExist+Uniq}

    We commence by defining in what sense we establish our solution:
    \begin{definition}
        Let $(V, E) \in \cG$. We say that 
        \begin{equation*}
            \Big( (\Omega, \cF, \bP), \big( X_0^u, Z^u, b_u, X^u \big)_{u \in V} \Big)
        \end{equation*}
        is a weak solution to the stochastic differential equation \eqref{eq:locally-interaction} when $(\Omega, \cF, \bP)$ is a probability space and
        \begin{equation*}
            \big( X_0^u, Z^u, X^u[t] \big)_{u \in V} : \Omega \to \big( \bR^d \times \cC_{0, T}^d \times \cC_T^d\big)^{V}
        \end{equation*}
        is a random variable such that:
        \begin{enumerate}
            \item The random variable $(X_0^V)$ has distribution $\mu_0 \in \cP\big( (\bR^d)^V \big)$; 
            \item The expectation
            \begin{equation*}
                \sup_{u \in V} \bE\Big[ \big\| X^u \big\|_{\infty}^2 \Big] < \infty;
            \end{equation*}
            \item For every $u \in V$ the function $b_u:[0,T] \times \cC_T^d \times (\cC_T^d)^{N_u} \to \bR^d$ is progressively measurable and the random variable
            \begin{equation*}
                \omega \mapsto \bigg( \int_0^T \Big| b_u\big( t, X^{u}[t], X^{N_u}[t] \big) \Big| dt \bigg)_{u \in V} \in (\bR^d)^{V}
                \quad
                \bP\mbox{-almost surely; }
            \end{equation*}	
            \item The random variables
            \begin{align*}
                &(X^V) 
                \quad \mbox{and}\quad
                \Big( X_0^u + \int_0^\cdot b_u\big( t, X^u[t], X^{N_u}[t] \big) dt + Z_{\cdot}^u \Big)_{u \in V}
            \end{align*}
            are $\bP$-almost surely equal.
        \end{enumerate}
    \end{definition}

    We work under the following set of assumptions:
    \begin{assumption}
        \label{assumption:ExUn}
        Let $(V, E) \in \cG$, let $M \in L^1\big( [0,T]; \bR\big)$ and let $\mu_0 \in \cP( (\bR^d)^{V} \big)$. Let $(\gamma^u, b_u)_{u\in V}$ and suppose that:
        \begin{enumerate}[label=(X.\arabic*)]
            \item 
            \label{enum:assumption:ExUn-1}
            For every $u\in V$, let $\gamma^u \in \cP(\cC_{0,T}^d)$ be a Gaussian measure with abstract Wiener space $(\cE^u, \RKHS_T^u, \BSi^u )$ where $\cE^u \subseteq \cC_{0, d}^d$. Further, suppose that the filtration of reproducing kernel Hilbert spaces
            \begin{equation*}
                \Big( \RKHS_t^u, \big\langle \cdot, \cdot \big\rangle_{\RKHS_t} \Big)_{t\in [0,T]}
                \quad \mbox{is isomorphic to}\quad
                \Big( L_{t, T}^2(\bR^d), \big\langle \cdot, \cdot \big\rangle_t \Big)_{t\in [0,T]}. 
            \end{equation*}
            \item 
            \label{enum:assumption:ExUn-2}
            For every $u \in V$, the function $b_u: [0, T] \times \cC_T^d \times (\cC_T^d)^{N_u} \to \bR^d$ is progressively measurable and suppose for any $(X^u, X^{N_u}) \in \cC_T^d \times (\cC_T^d)^{N_u}$ that
            \begin{align}
                \label{eq:assumption:MRF-1-finite}
                &\bigg\| \int_0^\cdot b_u\big( s, X^u[s], X^{N_u}[s] \big) ds \bigg\|_{\RKHS_T}< \infty. 
            \end{align}
            Further, suppose for every $s, t\in[0,T]$ and $(X, X^{N_u}) \in (\bR^d \times \cE^u) \times (\bR^d \times \cE^v)^{N_u}$ that
            \begin{align}
                \nonumber
                \bigg\| \Pi_{s, t}\Big[& \int_0^\cdot b_u\big( s, X^u[s], X^{N_u}[s] \big) ds \Big] \bigg\|_{\RKHS_T}^2 
                \\
                \label{eq:assumption:MRF-1-linear}
                &\leq 
                \int_s^t M_r dr \cdot \Big( 1 + \big\| X^u \big\|_{\cE^u}^2 + \frac{1}{|N_u|} \sum_{v\in N_u} \big\| X^v \big\|_{\cE^v}^2 \Big) < \infty
            \end{align}
            where the operator $\Pi_t:\RKHS_T \to \RKHS_t$ is defined in Definition \ref{definition:RKHSprojection};
        \end{enumerate}
    \end{assumption}

    \begin{remark}
        Let us take a moment to compare Equation \eqref{eq:assumption:MRF-1-linear} and \eqref{eq:assumption:MRF-1-linear*}: thanks to Proposition \ref{prop:Existence_Q} we conclude that for every $(X^u, X^{N_u}) \in \cC_T^d \times (\cC_T^d)^{N_u}$, 
        \begin{equation*}
            \bigg\| \Pi_{s, t}\Big[ \int_0^\cdot b_u\big( r, X^u[r], X^{N_u}[r] \big) dr \Big] \bigg\|_{\RKHS_T}^2 = \int_s^t \Big| Q^{b_u}\big(r, X^u[r], X^{N_u}[u] \big) \Big|^2 dr
        \end{equation*}
        where $Q^{b_u}$ is defined in Equation \eqref{eq:def-Q}. 
        
        Therefore, if Equation \eqref{eq:assumption:MRF-1-linear} holds for some function $M \in L^1\big([0,T]; \bR\big)$ then the function $F$ from Equation \eqref{eq:assumption:MRF-1-linear*} is simply
        \begin{equation*}
            F_u\big( X^u, X^{N_u} \big) = \Big( 1 + \big\| X^u \big\|_{\cE^u}^2 + \frac{1}{|N_u|} \sum_{v\in N_u} \big\| X^v \big\|_{\cE^v}^2 \Big)
        \end{equation*}
        As we saw in Examples \ref{example:Q} and \ref{example:Q2}, the norm of the abstract Wiener space can by $\| \cdot \|_{\infty, T}$ when $H<\tfrac{1}{2}$, but when $H>\tfrac{1}{2}$ we need the larger norm $\| \cdot \|_{H-\varepsilon}$-H\"older norm. 
    \end{remark}
    
    \begin{theorem}
        \label{theorem:ExUn}
        Let $(V, E) \in \cG$, $M \in L^1\big([0,T]; \bR \big)$, $\mu_0 \in \cP( (\bR^d)^V \big)$ and $(\gamma^u, b_u)_{u\in V}$ satisfy Assumption \ref{assumption:ExUn}. Then there exists a unique in law solution to Equation \eqref{eq:locally-interaction}. 
    \end{theorem}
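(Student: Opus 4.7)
The plan is to apply the Girsanov-type change of measure from Theorem~\ref{theorem:ap:girsanov} one vertex at a time and combine these via a projective/limiting argument for the infinite graph. The reference measure I use is
\begin{equation*}
    P^* := \mu_0 \otimes \bigotimes_{u\in V} \gamma^u
\end{equation*}
on the product space $(\bR^d \times \cC_{0,T}^d)^V$, under which the canonical coordinate process $(X_0^u + Z^u)_{u \in V}$ is a family of independent Gaussian Volterra processes with initial joint law $\mu_0$. Each $\gamma^u$ satisfies Assumption~\ref{assumption:VolterraK} by hypothesis~\ref{enum:assumption:ExUn-1}, so the Girsanov machinery of Section~\ref{section:Gaussian} applies coordinatewise.

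For a finite subset $A \subset V$, the idea is to apply Theorem~\ref{theorem:ap:girsanov} to the stochastic exponential
\begin{equation*}
    \cZ^A_t := \prod_{u \in A} \cZ_t\Bigl[ \int_0^\cdot b_u\bigl(s, X^u[s], X^{N_u}[s]\bigr)\, ds \Bigr].
\end{equation*}
Because the Gaussian coordinates $Z^u$ are independent under $P^*$, the Malliavin divergences decouple across $u \in A$, and $\log \cZ^A_t$ is a sum of per-vertex log-stochastic exponentials. To upgrade local martingality to true martingality, I invoke Proposition~\ref{proposition:Novikov} applied coordinatewise: Equation~\eqref{eq:assumption:MRF-1-linear} combined with Fernique's theorem, applied to the independent Gaussian measures $\gamma^u$ restricted to each $\cE^u$, yields exponential moments of $\|X^u\|_{\cE^u}^2$. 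Since $M \in L^1\bigl([0,T]; \bR\bigr)$, a sufficiently fine partition $0 = t_0 < \ldots < t_N = T$ makes each $\int_{t_k}^{t_{k+1}} M_r\, dr$ small enough to activate Novikov on the corresponding subinterval. The resulting probability measure $P^A := \cZ^A_T \cdot P^*$ is a weak solution on the finite subgraph $A$.

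The main obstacle is passing from finite subgraphs to the infinite graph $V$. The plan is to fix an exhaustion $V_n \uparrow V$ of finite subgraphs and consider the measures $P^{V_n}$. The key technical input is a uniform-in-$n$ entropy bound on the marginal of $P^{V_n}$ over any fixed finite $B \subset V$, which follows from Assumption~\ref{assumption:ExUn}\ref{enum:assumption:ExUn-2} combined with Fernique-type exponential moments of the $\gamma^u$. By Pinsker-type arguments this yields tightness of these marginals, and any subsequential weak limit point solves the prescribed equations on $B$ because the drift $b_u$ is a finite-neighbourhood functional of the paths and therefore stable under restriction. A consistency check on the resulting finite-dimensional distributions then produces, via Kolmogorov extension, a single probability measure $P$ on the full product space that is a weak solution. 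The uniform second moment bound $\sup_u \bE^P[|X^u|^2] < \infty$ follows from combining the Girsanov change of measure formula with Fernique applied uniformly in $u$, since the controlling function $M$ does not depend on $u$.

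Uniqueness in law follows by reversing the Girsanov transformation. Given any other weak solution $\tilde P$, one verifies that on every finite subgraph $A$ the inverse Radon--Nikodym derivative $(\cZ^A_T)^{-1}$ converts the $A$-marginal of $\tilde P$ back into a measure whose marginal agrees with that of $P^*$; the reverse direction is valid thanks to the same Novikov-type bounds (Proposition~\ref{proposition:Novikov}) combined with the linear growth control~\eqref{eq:assumption:MRF-1-linear}. Since this holds for every finite $A$, the finite-dimensional distributions of $\tilde P$ coincide with those of $P$, forcing $\tilde P = P$ on the cylindrical $\sigma$-algebra.
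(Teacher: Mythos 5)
Your finite-subset step is essentially the paper's: the same product reference measure $P^* = \mu_0 \times \prod_{u\in V}\gamma^u$ (which, by hypothesis \ref{enum:assumption:ExUn-1} and Theorem \ref{thm:LND-filtration}, is again a Gaussian Volterra law satisfying Assumption \ref{assumption:VolterraK}), the same product of per-vertex stochastic exponentials, and the same verification of the martingale property via Proposition \ref{proposition:Novikov}, using Fernique together with a time partition on which $\int_{t_k}^{t_{k+1}} M_r\,dr$ is small. Where you genuinely diverge is the passage to the infinite graph. The paper does \emph{not} use entropy bounds, tightness, or subsequential weak limits here: it checks directly that the reference measures $P^{*,n}$ and the Girsanov-transformed measures $P^n$ each form a Kolmogorov-consistent family and invokes the measure extension theorem (Theorem \ref{theorem:KMET}) twice, which is precisely the point the authors advertise as avoiding compactness arguments. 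The entropy/tightness machinery you import is what the paper reserves for the Appendix (Proposition \ref{pro:tightness}, Lemma \ref{le:infinitegraphlimit}) in the proof of the MRF property, where a limit of \emph{truncated-drift} systems must be identified.

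Two cautions about your route. First, the tightness step is redundant for existence: once you have consistency of the finite-dimensional laws, Kolmogorov extension already produces $P$ without any compactness, so the subsequential-limit detour buys you nothing and costs you the hardest step, namely identifying the weak limit as a solution. That identification is not automatic from ``the drift is a finite-neighbourhood functional'': the $b_u$ are only progressively measurable, not continuous, so the map $X \mapsto X - \int_0^\cdot b_u\,ds$ need not commute with weak limits; the paper's Lemma \ref{le:infinitegraphlimit} circumvents this by characterising the limit through the Gaussian law of the driftless increments rather than through continuity of the drift. Second, your consistency claim for the family $P^{V_n}$ deserves the same scrutiny in your argument as in the paper's: the density $\cZ_T^{V_n}$ depends on coordinates in $\partial V_n$ as well as $V_n$, so consistency amounts to showing that appending the exponential for one further vertex integrates out to $1$ against its own Gaussian noise conditionally on the remaining coordinates; this should be stated rather than assumed. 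Your uniqueness sketch via inverting the Radon--Nikodym derivative on finite subsets is sound and is in fact more explicit than what the paper writes.
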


    \begin{proof}[Proof of Theorem \ref{theorem:ExUn} when $|V|< \infty$:]
        We use the techniques of \cite{karatzasShreve}*{Proposition 5.3.6}: \\ We rewrite $(\cC_T^d)^V = \cC_T^{d \times |V|}$ and denote $d' = d \times |V|$. 

        Since the collection of measures $(\gamma^u)_{u \in V}$ satisfies \ref{enum:assumption:ExUn-1}, we conclude from Theorem \ref{thm:LND-filtration} that the product measure $\prod_{u\in V} \gamma^u \in \cP\big( \cC_{0, T}^{d'} \big)$ is the law of a $d'$-dimensional Gaussian Volterra process that satisfies Assumption \ref{assumption:VolterraK}. 

        Using that $\cC_T^{d'} \equiv \bR^{d'} \times \cC_{0, T}^{d'}$, we can define the measure
        \begin{equation}
            \label{eq:measureP^*}
            P^* := \mu_0 \times \prod_{u\in V} \gamma^u \in \cP\big( \bR^{d'} \times \cC_{0, T}^{d'} \big) \equiv \cP\big( \cC_T^{d'} \big)
        \end{equation}
        Thanks to \ref{enum:assumption:ExUn-2}, we conclude that for every $u \in V$
        \begin{equation*}
            \bigg\| \int_0^{\cdot} b_u \big( s, X^u[s], X^{N_u}[s] \big) ds \bigg\|_{\RKHS_T} < \infty \quad P^*\mbox{-almost surely. }
        \end{equation*}
        Further, Equation \eqref{eq:assumption:MRF-1-linear} in particular means that we can choose a monotone increasing sequence $(t_n)_{n\in \bN}$ taking values in $[0,T]$ such that $t_n \uparrow \infty$ and for every $n\in \bN$
        \begin{align*}
            \bE^{P^*}\Bigg[ \exp&\bigg( \sum_{u\in V} \Big\| \Pi_{t_n, t_{n+1}}\Big[ \int_0^\cdot b_u\big(s, X^u[s], X^{N_u}[s] \big) ds\Big] \Big\|_{\RKHS_T}^2 \bigg) \Bigg]
            \\
            =&\bE^{P^*}\Bigg[ \exp\bigg( \int_{t_n}^{t_{n+1}} M_r dr \cdot \sum_{u\in V} \Big( 1+  \big\| X^u\big\|_{\cE^u}^2 + \tfrac{1}{|N_u|} \sum_{v\in N_u} \big\| X^{v}\big\|_{\cE^v}^2 \Big) \bigg) \Bigg] < \infty
        \end{align*}
        thanks to Ferniques Theorem. Therefore, we can apply Proposition \ref{proposition:Novikov} to conclude that
        \begin{equation*}
            t\mapsto \cZ_t\bigg[ \bigoplus_{u\in V} \int_0^\cdot b_u\Big( s, X^u[s], X^{N_u}[s] \Big) ds \bigg] \quad \mbox{is an $\cF_t$-martingale. }
        \end{equation*}
        We apply Theorem \ref{theorem:ap:girsanov} to conclude that there is a measure $P \in \cP\big( \cC_T^{d'} \big)$ defined by
        \begin{equation*}
            \frac{d P}{dP^*} \bigg|_{\cF_t} = \cZ_t\bigg[ \bigoplus_{u\in V} \int_0^\cdot b_u\Big( s, X^u[s], X^{N_u}[s] \Big) ds \bigg]
        \end{equation*}
        and under $P$ the law of the process
        \begin{equation*}
            (Z^u)_{u\in V} := \bigoplus_{u\in V} \bigg( X_\cdot^u - \int_0^\cdot b_u\Big( s, X^u[s], X^{N_u}[s] \Big) ds \bigg)
        \end{equation*}
        is the same as the law of the canonical process under $P^*$. Therefore, our weak solution to Equation \eqref{eq:locally-interaction} is the probability space $\big( \cC_T^{d'}, \cB(\cC_T^{d'}), P \big)$ paired with the collection $(X_0^u, Z^u, b_u, X^u)_{u \in V}$. 
    \end{proof}

    \subsubsection*{Measure theory recap}

    Suppose now that $|V|\nless \infty$: then the vector space $(\cC_T^d)^V$ is no longer a Banach space but a \emph{locally convex topological vector space} and we need to take additional care when we define our measure change. 

    The following well known Theorem which can be found in \cite{Tao2011Introduction} shall be used to extend Theorem \ref{theorem:ap:girsanov} to countably infinite collections of Gaussian processes:
    \begin{theorem}[Kolmogorov's measure extension Theorem]
        \label{theorem:KMET}
        Let $\big( (\cC_T^d)^\bN, \cB'\big( (\cC_T^d)^\bN \big) \big)$ be a measurable space for every $n\in \bN$ let $P_n \in \cP\big( (\cC_T^d)^{\times n} \big)$. We say that the sequence of measures $(P_n)_{n\in \bN}$ is \emph{consistent} if for every $n\in \bN$ and every $A \in \cB\big( (\cC_T^d)^{\times n} \big)$, 
        \begin{equation}
            \label{eq:consistent}
            P_{n+1}\big[ A \times \cC_T^d \big] = P_n\big[ A \big]. 
        \end{equation}
        Then there exists a unique probability measure $P \in \cP\big( (\cC_T^d)^{V} \big)$ such that for every $n \in \bN$ and $A \in \cB\big( (\cC_T^d)^{\times n} \big)$, the cylinder sets
        \begin{equation*}
            C_n(A):=\Big\{ (x_1,x_2, ...) \in (\cC_T^d)^{\bN}: (x_1, ..., x_n) \in (\cC_T^d)^{\times n} \Big\}
        \end{equation*}
        satisfy that
        \begin{equation}
            \label{eq:theorem:KMET}
            P\big[ C_n(A) \big] = P^n[A]. 
        \end{equation}
    \end{theorem}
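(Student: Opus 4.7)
The plan is to define $P$ first on the algebra of cylinder sets and then extend via Carath\'eodory's Theorem. Given a cylinder $C_n(A)$ with $A\in \cB\big((\cC_T^d)^{\times n}\big)$, I set $P[C_n(A)] := P_n[A]$. The consistency condition \eqref{eq:consistent} guarantees this is well defined: the same cylinder $C_n(A)$ can be presented at any higher level $m>n$ as $C_m\big(A\times (\cC_T^d)^{\times (m-n)}\big)$, and iterated application of \eqref{eq:consistent} shows the assigned value is the same. Finite additivity on the algebra $\cA$ of all cylinder sets then follows by lifting any finite collection of cylinders to a common level $n$ and invoking the fact that $P_n$ is genuinely a measure on $\cB\big((\cC_T^d)^{\times n}\big)$.

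The principal obstacle is proving countable additivity of $P$ on $\cA$, or equivalently continuity at $\emptyset$. Suppose for contradiction that $\big( C_{n_k}(A_k) \big)_{k\in \bN}$ is a decreasing sequence of cylinders with $\bigcap_k C_{n_k}(A_k) = \emptyset$ but $P[C_{n_k}(A_k)]\geq \varepsilon>0$ for every $k$. Since $\cC_T^d$ is a Polish space, each finite product $(\cC_T^d)^{\times n_k}$ is Polish too, and every Borel probability measure on a Polish space is Radon. I can therefore choose compact sets $K_k\subseteq A_k$ with $P_{n_k}[A_k\setminus K_k]<\varepsilon\cdot 2^{-k-1}$, so that the modified cylinders $C_{n_k}(K_k)$ still satisfy $P[C_{n_k}(K_k)]\geq \varepsilon/2>0$ after intersecting across $k$. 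A standard diagonal-extraction argument on the coordinates (using that each $K_k$ is compact and that cylinders at different levels can be lifted to a common level where the projections onto earlier coordinates remain compact) produces a point $x^*\in (\cC_T^d)^{\bN}$ whose projection into $(\cC_T^d)^{\times n_k}$ lies in $K_k$ for every $k$, contradicting $\bigcap_k C_{n_k}(A_k)=\emptyset$.

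Once countable additivity on $\cA$ is established, Carath\'eodory's extension theorem provides a unique probability measure $P$ on $\sigma(\cA)$, which by construction coincides with the cylindrical $\sigma$-algebra $\cB'\big((\cC_T^d)^{\bN}\big)$. For uniqueness, I would note that the collection of cylinder sets forms a $\pi$-system generating $\cB'\big((\cC_T^d)^{\bN}\big)$ on which any measure satisfying \eqref{eq:theorem:KMET} is completely determined; the $\pi$-$\lambda$ theorem then forces agreement on the full $\sigma$-algebra.

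The entire argument is classical bookkeeping with the consistency relation except for the tightness/compact-approximation step, which is where the Polish structure of $\cC_T^d$ is essential. That is the step I expect to require the most care, and I would state it as a stand-alone inner-regularity lemma so that the diagonal extraction becomes transparent.
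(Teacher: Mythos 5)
The paper does not prove this statement: it is quoted as a classical result and attributed to \cite{Tao2011Introduction}, so there is no in-paper argument to compare yours against. Your proposal is the standard proof of the Kolmogorov extension theorem for countable products of Polish spaces, and it is essentially correct: consistency gives well-definedness of the set function on the algebra of cylinder sets, finite additivity follows by lifting to a common level, countable additivity reduces to continuity at $\emptyset$, which you obtain from inner regularity of Borel probability measures on the Polish spaces $(\cC_T^d)^{\times n}$ together with a diagonal compactness extraction, and Carath\'eodory plus the $\pi$--$\lambda$ theorem finish existence and uniqueness. Two small points worth making explicit if you write this out in full: first, in the compactness step the decreasing sets $D_k=\bigcap_{j\leq k}C_{n_j}(K_j)$ are the objects you should extract points from (their measures stay above $\varepsilon/2$ by your $2^{-k-1}$ budget), and the limit point lands in each $K_j$ because the $K_j$ are closed under coordinatewise convergence of the projections; second, you should note that for a countable product the $\sigma$-algebra generated by the cylinder sets is exactly the cylindrical $\sigma$-algebra $\cB'\big((\cC_T^d)^{\bN}\big)$ appearing in the statement, so Carath\'eodory's extension really does live on the right $\sigma$-algebra. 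Neither point is a gap, only bookkeeping.
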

    
    \begin{proof}[Proof of Theorem \ref{theorem:ExUn} when $|V|\nless \infty$:]
        As the set of vertices is countably infinite, let $\phi:V \to \bN$ be an enumeration and denote $V^n:=\big\{ v \in V: \phi[v]\leq n \big\}$. 

        For every $n\in \bN$, we denote the canonical projection $\pi_n: (\bR^d)^V \to (\bR^d)^{V^n}$ and $\mu^n:= \mu_0 \circ (\pi_n)^{-1} \in \cP\big( (\bR^d)^{\times n} \big)$. For every $u \in V$, we have the Gaussian measure $\gamma_u \in \cP(\cC_T^d)$ and we define the product measure 
        \begin{equation*}
            P^{*,n}:=\mu^n \times \prod_{i=1}^n \gamma^{\phi^{-1}[i]} \in \cP\Big( (\bR^d)^{V^n} \times (\cC_{0,T}^d)^{V^n} \Big) \equiv \cP\Big( (\cC_T^d)^{V^n} \Big)
        \end{equation*}
        Then the sequence of measures $P^{n, *}$ satisfies Equation \eqref{eq:consistent} and we conclude from Theorem \ref{theorem:KMET} that there exists a measure $P^* \in \cP\big( (\cC_T^d)^{V} \big)$ that satisfies Equation \eqref{eq:theorem:KMET}. 

        Thanks to \ref{enum:assumption:ExUn-2}, we conclude that for every $n \in 
        \bN$
        \begin{equation*}
            \sum_{u \in V^n} \bigg\| \int_0^{\cdot} b_u \big( s, X^u[s], X^{N_u}[s] \big) ds \bigg\|_{\RKHS_T} < \infty \quad P^{*, n}\mbox{-almost surely. }
        \end{equation*}
        Further, Equation \eqref{eq:assumption:MRF-1-linear} in particular means that we can choose a monotone increasing sequence $(t_m)_{m\in \bN}$ taking values in $[0,T]$ such that $t_m \uparrow \infty$ and for every $n\in \bN$
        \begin{equation*}
            \bE^{P^{*,n}}\Bigg[ \exp\bigg( \sum_{u \in V^n} \Big\| \Pi_{t_m, t_{m+1}}\Big[ \int_0^\cdot b_u\big(s, X^u[s], X^{N_u}[s] \big) ds\Big] \Big\|_{\RKHS_T}^2 \bigg) \Bigg]< \infty
        \end{equation*}
        as above. Therefore, we can apply Proposition \ref{proposition:Novikov} to conclude that
        \begin{equation*}
            t\mapsto \cZ_t\bigg[ \bigoplus_{u\in V^n} \int_0^\cdot b_u\Big( s, X^u[s], X^{N_u}[s] \Big) ds \bigg] \quad \mbox{is an $\cF_t$-martingale. }
        \end{equation*}
        We apply Theorem \ref{theorem:ap:girsanov} to conclude that there is a measure $P^n \in \cP\big( (\cC_T^{d})^{V^n} \big)$ defined by
        \begin{equation*}
            \frac{d P^n}{dP^{*,n}} \bigg|_{\cF_t} = \cZ_t\bigg[ \bigoplus_{u\in V^n} \int_0^\cdot b_u\Big( s, X^u[s], X^{N_u}[s] \Big) ds \bigg]
        \end{equation*}
        We can also verify that the sequence of measures $P^n$ satisfies Equation \eqref{eq:consistent} and we conclude from Theorem \ref{theorem:KMET} that there exists a measure $P \in \cP\big( (\cC_T^d)^{V} \big)$ that satisfies Equation \eqref{eq:theorem:KMET}. Under $P$, for any choice of $n\in \bN$ the law of the collection of processes
        \begin{equation*}
            \big( Z^u \big)_{u \in V^n}: = \bigoplus_{u \in V^n} \bigg( X_\cdot^u - \int_0^\cdot b_u\big( s, X^u[s], X^{N_u}[s] \big) ds \bigg)
        \end{equation*}
        is the same as the law of the $\big( X^u\big)_{u \in V^n}$ under $P^{*}$. Therefore, our weak solution to Equation \eqref{eq:locally-interaction} is
        \begin{equation*}
            \Big( (\cC_T^{d})^{V}, \cB'\big( (\cC_T^{d})^{V} \big), P \big)
            \quad \mbox{paired with the collection}\quad
            (X_0^u, Z^u, b_u, X^u)_{u \in V}.
        \end{equation*}
    \end{proof}

    \subsection{Markov Random Field property}
    \label{subsection:MRF}

    We start by recalling the Second-order Hammersley-Clifford Theorem (a proof of which can be found in \cite{lacker2020Locally}):
	\begin{proposition}[$2^{nd}$-order Hammersley-Clifford]
		\label{proposition:2HammersleyClifford}
		Let $G = (V, E)$ be a finite graph and let $(\cX, d)$ be a metric space. Suppose that $\nu \in \cP(\cX^{V})$ is absolutely continuous with respect to a product measure $\nu^* = \bP^{V}$ for some $\bP \in \cP(\cX)$. 
		\begin{enumerate}[label=(\roman*)]
			\item 
			\label{proposition:2HammersleyClifford-1}
			The measure $\nu$ is a 2-Markov Random Field. 
			\item 
			\label{proposition:2HammersleyClifford-2}
			The Radon–Nikodym derivative of the measure $\nu$ with respect to the measure $\nu^*$ factorises of the form
			\begin{equation}
		        \label{eq:proposition:2HammersleyClifford} 
				\frac{d \nu}{d\nu^*}(x) = \prod_{K \in \clq{G}{2}} f_K(x),  \quad x\in \cX^{V}
			\end{equation}
			for some measurable function $f_K: \cX^{V} \to \bR^+$ for any $K \in \clq{G}{2}$. 
		\end{enumerate}
		Then \ref{proposition:2HammersleyClifford-2} implies \ref{proposition:2HammersleyClifford-1}. Further, if $\tfrac{d \nu}{d\nu^*}(x)$ is strictly positive then \ref{proposition:2HammersleyClifford-1} implies \ref{proposition:2HammersleyClifford-2}. 
	\end{proposition}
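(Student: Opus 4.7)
The plan is to handle the two implications separately. For \ref{proposition:2HammersleyClifford-2} $\Rightarrow$ \ref{proposition:2HammersleyClifford-1}, I would fix a finite $A \subset V$ and exploit the following geometric observation: any $K \in \clq{G}{2}$ meeting $A$ is contained in $A \cup \partial^2 A$, because every vertex of $K$ lies within graph distance $2$ of some vertex of $A$, and such vertices are precisely those captured by $\partial^2 A$ (a short induction using $\partial^2 A = \partial A \cup \partial(A \cup \partial A)$ verifies this). Splitting the product in Equation \eqref{eq:proposition:2HammersleyClifford} according to whether $K$ meets $A$ or not yields a factorisation
\begin{equation*}
    \frac{d\nu}{d\nu^*}(x) = g\bigl(x^{A \cup \partial^2 A}\bigr) \cdot h\bigl(x^{V \setminus A}\bigr).
\end{equation*}
Since $\nu^* = \bP^V$ is a product measure, the conditional expectation $\bE^{\nu}\bigl[ F(Y^A) \,\big|\, Y^{V \setminus A} \bigr]$ can then be computed by integrating $g$ against $\bP^A$: the factor $h$ cancels between numerator and denominator, and the result depends on $Y^{V \setminus A}$ only through $Y^{\partial^2 A}$, which is exactly the 2-MRF property.

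The reverse implication under strict positivity is the harder direction, and I would follow the M\"obius-inversion strategy familiar from the classical Hammersley--Clifford theorem, adapted to 2-cliques. Set $H(x) := \log\bigl( \tfrac{d\nu}{d\nu^*}(x) \bigr)$, fix a reference configuration $\bar{x} \in \cX^V$, and for each finite $S \subseteq V$ define
\begin{equation*}
    H_S(x) := \sum_{T \subseteq S} (-1)^{|S \setminus T|}\, H\bigl(x^T, \bar{x}^{V \setminus T}\bigr).
\end{equation*}
A standard inclusion--exclusion calculation then yields $H(x) = \sum_{S \subseteq V} H_S(x)$ with each $H_S$ depending only on the coordinates $x^S$. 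The factorisation \eqref{eq:proposition:2HammersleyClifford} therefore reduces to proving that $H_S \equiv 0$ for every $S$ that is not a 2-clique, since exponentiating and grouping the surviving $H_S$ produces the clique potentials $f_K$.

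The main obstacle is this vanishing step. If $S$ is not a 2-clique, pick $u, v \in S$ with graph distance strictly greater than $2$. Grouping the terms defining $H_S$ according to the intersection $T \cap \{u, v\}$ reduces the claim to the mixed-difference identity
\begin{equation*}
    H\bigl(x^u, y^v, z\bigr) - H\bigl(x^u, \bar{x}^v, z\bigr) - H\bigl(\bar{x}^u, y^v, z\bigr) + H\bigl(\bar{x}^u, \bar{x}^v, z\bigr) = 0
\end{equation*}
for every choice of coordinates $z$ on $V \setminus \{u, v\}$. After exponentiating, this is exactly the statement that $Y^u$ and $Y^v$ are conditionally independent given $Y^{V \setminus \{u, v\}}$ under $\nu$. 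Since $d_G(u, v) > 2$ we have $v \notin \{u\} \cup \partial^2 \{u\}$, and the 2-MRF property applied with $A = \{u\}$ gives conditional independence of $Y^u$ and $Y^v$ given $Y^{\partial^2 \{u\}}$, hence \emph{a fortiori} given $Y^{V \setminus \{u, v\}}$. Strict positivity of the density is what licenses taking logarithms and moving freely between the conditional-independence statement and the multiplicative identity for $d\nu/d\nu^*$. Collecting the surviving contributions of $H_S$ over 2-cliques then yields the required factorisation.
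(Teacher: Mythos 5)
The paper does not actually prove this proposition: it is recalled with a pointer to \cite{lacker2020Locally}, so there is no in-paper argument to compare against. Your proof is the standard one and is essentially correct. The easy direction is fine: the containment of any $2$-clique meeting $A$ in $A \cup \partial^2 A$ is right, and splitting the product accordingly gives the local Markov property, hence the $2$-MRF property (implicitly you are reading $f_K$ as depending only on the coordinates in $K$, which is the intended meaning of the statement even though it is written as $f_K:\cX^V\to\bR^+$). For the converse, your M\"obius-inversion argument is the classical Hammersley--Clifford route, and the one step that could have gone wrong --- passing from conditional independence given $Y^{\partial^2\{u\}}$ to conditional independence given the larger set $Y^{V\setminus\{u,v\}}$ --- is correctly licensed: it is exactly the weak-union rule applied to $Y^u \perp (Y^v, Y^{B\setminus\{v\}}) \mid Y^{\partial^2\{u\}}$ with $B=(\{u\}\cup\partial^2\{u\})^c$, not a bare monotonicity of conditioning (which would be false). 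The one place your sketch is thinner than a complete proof is measure-theoretic: on a general metric space $\cX$ the slices $\{x^u=\bar{x}^u\}$ are typically null, so the mixed-difference identity derived from conditional independence only holds for $\nu^*$-a.e.\ configuration, and one must invoke Fubini to select a single reference configuration $\bar{x}$ (a.e.\ choice works, and finiteness of $V$ lets one choose $\bar{x}$ simultaneously for all pairs $(u,v)$ and all $S$), as well as fix measurable versions of the resulting potentials $H_S$. These are standard repairs and do not affect the validity of the strategy.
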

    Curiously, Hammersley and Clifford never published their original work on the link between Gibbs random fields and Markov random fields as they considered it \emph{incomplete} given the - now known to be essential - requirement of positive definite probabilities, but subsequent publications were accepted along with new adaptions of the proof \cites{Grimmett1973Theorem, Besag1974Spatial}. 

    This actually points us to a vital detail of our technique: when the graph $(V, E)$ has a vertex set that is countably infinite, we are not able to write down a Radon-Nikodym derivative that is strictly positive so that we must use alternative truncation methods:
    \subsubsection*{Finite graphs}

    We start by considering a system of interacting equations with interactions described by a finite graph. 
    \begin{assumption}
		\label{assumption:MRF-1}
		Let $(V, E)$ be a finite graph, let $M \in L^1\big([0,T]; \bR \big)$ and let $\mu_0 \in \cP( (\bR^d)^{V} \big)$. Let $(\gamma^u, b_u)_{u\in V}$ satisfy Assumption \ref{assumption:ExUn} and additionally that:
        \begin{enumerate}[label=(X.\arabic*)]
            \setcounter{enumi}{2}
            \item For every $u \in V$, there exist probability measures $\lambda_u \in \cP(\bR^d)$ such that $\mu_0 \in \cP_2\big( (\bR^d)^{V} \big)$ is absolutely continuous with respect to the product measure
            \begin{equation}
                \label{eq:assumption:MRF-1-InitCond}
                \mu_0^* = \prod_{v\in V} \lambda_v 
                \quad \mbox{and the density}\quad
                \frac{d\mu_0}{d\mu_0^*}(x) = \prod_{K \in \clq{G}{2}} f_K(x^K), \quad x\in (\bR^d)^{V}. 
            \end{equation}
        \end{enumerate}
	\end{assumption}

    \begin{theorem}
		\label{theorem:MRF-1}
		Let $(V, E) \in \cG$, $M \in L^1\big([0,T]; \bR \big)$, $\mu_0 \in \cP( (\bR^d)^{V} \big)$ and $(\gamma^u, b_u)_{u\in V}$ satisfy Assumption \ref{assumption:MRF-1}. Then for any $t\in [0,T]$, the unique in law solution to Equation \eqref{eq:locally-interaction}
        \begin{equation*}
            \bP\circ \Big( X^V[t] \Big)^{-1} \in \cP\Big( (\cC_t^d)^V \Big)
        \end{equation*}
        is a 2-MRF. 
	\end{theorem}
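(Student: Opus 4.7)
The plan is to combine the Girsanov construction from the proof of Theorem \ref{theorem:ExUn} with the second-order Hammersley-Clifford result (Proposition \ref{proposition:2HammersleyClifford}) by exhibiting an explicit 2-clique factorisation of the Radon-Nikodym derivative of the solution law with respect to a suitable product reference measure. First I would form the product reference measure
\begin{equation*}
    P^{**} := \mu_0^* \times \prod_{u \in V} \gamma^u \in \cP\big( (\cC_T^d)^V \big),
\end{equation*}
which is a true product over $v \in V$ thanks to Assumption \ref{assumption:MRF-1}. By the proof of Theorem \ref{theorem:ExUn}, the solution measure $P$ is defined on $\cF_t$ by the Girsanov density $\cZ_t\big[\bigoplus_{u \in V} \int_0^\cdot b_u(s, X^u[s], X^{N_u}[s])\,ds\big]$ with respect to $P^* = \mu_0 \times \prod_u \gamma^u$. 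Multiplying through by $d\mu_0/d\mu_0^*$, which by hypothesis factorises as $\prod_{K \in \clq{G}{2}} f_K(X_0^K)$, gives the density of $P$ with respect to $P^{**}$.

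Next I would split the stochastic exponential as a product indexed by $u \in V$. Since the Gaussian measures $(\gamma^u)_{u \in V}$ are independent under $P^{**}$, the reproducing kernel Hilbert space of the joint measure is the orthogonal direct sum $\bigoplus_u \RKHS_T^u$; the Malliavin divergence of $\bigoplus_u h_u$ equals $\sum_u \delta^u(h_u)$ and $\|\bigoplus_u h_u\|^2 = \sum_u \|h_u\|^2$. Writing $h_u(X) := \int_0^\cdot b_u(s, X^u[s], X^{N_u}[s])\,ds$, this yields
\begin{equation*}
    \frac{dP}{dP^{**}}\bigg|_{\cF_t}(X)
    = \prod_{K \in \clq{G}{2}} f_K(X_0^K) \cdot \prod_{u \in V} \exp\!\Big( \delta^u\big( \Pi_t^u[h_u(X)] \big) - \tfrac{1}{2}\big\| \Pi_t^u[h_u(X)] \big\|_{\RKHS_t^u}^2 \Big).
\end{equation*}
Each factor indexed by $u$ depends only on the restriction of $X$ to the star $\{u\} \cup N_u$: the drift $h_u$ is a path functional of $(X^u[t], X^{N_u}[t])$ by construction, and $\delta^u$ acts only on the $u$-th coordinate. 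Crucially, the star $\{u\} \cup N_u$ has diameter at most $2$ (any two neighbours of $u$ are joined via $u$), so $\{u\} \cup N_u \in \clq{G}{2}$, and each factor in the product over $u$ is a function $g_u(X^{\{u\} \cup N_u}[t])$.

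Combining the two factorisations, the density of the time-$t$ pushforward of $P$ with respect to the product measure $P^{**} \circ (X^V[t])^{-1}$ is of the Gibbsian form \eqref{eq:proposition:2HammersleyClifford} over 2-cliques of $G$. Proposition \ref{proposition:2HammersleyClifford} then delivers the 2-MRF property. The main obstacle to verify carefully is that the $u$-th factor is genuinely measurable with respect to $\sigma(X^{\{u\} \cup N_u}[t])$ rather than depending on the full configuration; this requires checking that, under the product reference measure, the Malliavin divergence $\delta^u$ can be evaluated with the neighbouring paths $X^{N_u}$ treated as frozen parameters (which is legitimate since $Z^u$ is independent of $Z^{V \setminus \{u\}}$ and of $X_0^V$ under $P^{**}$), together with the martingale property of $\cZ_t$ from Theorem \ref{theorem:ap:girsanov} to reduce from time $T$ to time $t$.
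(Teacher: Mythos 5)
Your proposal is correct and follows essentially the same route as the paper: write the density of the solution law with respect to the product reference measure $\mu_0^* \times \prod_{u\in V}\gamma^u$ as the initial-condition factorisation times the per-vertex stochastic exponentials, observe that each exponential factor depends only on $X^{\{u\}\cup N_u}$ with $\{u\}\cup N_u \in \clq{G}{2}$, and invoke Proposition \ref{proposition:2HammersleyClifford}. Your extra care in justifying the per-vertex splitting of the Malliavin divergence via the orthogonal direct sum $\bigoplus_u \RKHS_T^u$ is a point the paper's proof asserts without comment, but it is the same argument.
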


	\begin{proof}
        Let $P^* \in \cP\big( (\cC_T^d)^V \big)$ be defined as in Equation \eqref{eq:measureP^*} and let $P$ be the unique law to the weak solution to Equation \eqref{eq:locally-interaction}. 
        
        For each $u \in V$, since $b_u$ is dependent only on $(X^u, X^{N_u}) \in \cC_T^d \times (\cC_T^d)^{N_u}$ and the set $\{u\} \cup N_u \in \clq{G}{2}$, we can conclude that for any $K\in \clq{G}{2}$ there exists measurable functions $\tilde{f}_K:(\cC_T^d)^K \to \bR$ such that
        \begin{equation}
            \frac{dP}{dP^*}\big( X^V[t] \big) = \prod_{K \in \clq{G}{2}} \tilde{f}\big( X^K \big)
        \end{equation}
        where
        \begin{equation*}
            \tilde{f}\big( X^K[t] \big) = 
            \begin{cases}
                \cZ_t\Big[ \int_0^\cdot b_u\big(s, X^K \big) ds \Big] 
                &\mbox{if }K = \{u\} \cup N_u
                \\
                0
                & \mbox{otherwise.}
            \end{cases}
        \end{equation*}

        Secondly, by Assumption \ref{assumption:MRF-1}
        \begin{equation*}
            \frac{d(\mu_0 \times \gamma^V)}{d(\mu_0^* \times \gamma^V)}(x_0^V) = \frac{d\mu_0}{d\mu_0^*}(x_0^V) = \prod_{K \in \clq{G}{2}} f_K\big( x_0^K \big)
        \end{equation*}
        so that
		\begin{equation*}
		    \frac{dP}{d(\mu_0^* \times \gamma^V)}\big( X^V[t] \big) = \frac{dP^*}{d(\mu_0^* \times \gamma^V)} \cdot \frac{dP}{dP^*}\big( X^V[t] \big) = \prod_{K \in \clq{G}{2}} \tilde{f}_K\big( X^K[t] \big) \cdot f_K\big( X_0^K \big). 
		\end{equation*}
		Thus for each $t\in [0,T]$ the Radon–Nikodym derivative $\tfrac{dP}{d(\mu_0^* \times \gamma^V)}\big( x[t] \big)$ has a 2-clique factorisation of the form Equation \eqref{eq:proposition:2HammersleyClifford} and courtesy of Proposition \ref{proposition:2HammersleyClifford} we conclude that $P$ is a 2-Markov Random Field over the graph $(V, E)$. 
	\end{proof}

    \subsubsection*{Infinite graphs}

    The argument used in the proof of Theorem \ref{theorem:MRF-1} relies on the underlying graph $(V, E)$ being finite, but we also want to consider the case where the graph is countably infinite:
    \begin{assumption}
		\label{assumption:MRF-2}
		Let $(V, E)$ be a countably infinite locally finite graph, let $M \in L^1\big( [0,T]; \bR \big)$ and let $\mu_0 \in \cP\big( (\bR^d)^V \big)$. Let $(\gamma^u, b_u)_{u\in V}$ satisfy Assumption \ref{assumption:ExUn} and additionally that: 
        \begin{enumerate}[label=(X.\arabic*')]
            \setcounter{enumi}{2}
            \item Suppose that the measure $\mu_0 \in \cP\big( (\bR^d)^{V} \big)$ is a 2-Markov Random Field and further that there exists a collection of measures $(\lambda_u)_{u\in V}$ such that for any finite set $A\subset V$, the marginal measure $\mu_0^A$ is equivalent to the product measure
            \begin{equation*}
                \mu_0^{*,A} = \prod_{v\in A} \lambda_v
            \end{equation*}
            and the initial law $\mu_0$ satisfies 
            \begin{align}
                \label{eq:assumption:MRF-2.int}
                \sup_{v \in V}\int_{(\bR^d)^V} |x^v|^2 d\mu_0(x^V) + \sup_{v\in V} \int_{\cE^u} \big\| X \big\|_{\cE^u}^2 d\gamma^u(X) < \infty;
            \end{align}
        \end{enumerate}
	\end{assumption}

	Building on Lemma \ref{le:specification-abstract} and \cite{lacker2020Locally}*{Proposition 4.4}, we obtain the following:
	\begin{proposition} 
        \label{pr:specification-finitegraph} 
		Let $\tilde{G}=(\tilde{V}, \tilde{E})$ and $\bar{G}=(\bar{V}, \bar{E} )$ be finite graphs and suppose that $V^* \subset \tilde{V} \cap \bar{V}$ satisfies \eqref{asmp:edgesets}. Let $A \subset V^*$ satisfying $\partial^2_{\tilde{G}} A \subset V^*$ and $\partial^2_{\bar{G}} A \subset V^*$.

		Suppose $\big( \tilde{\mu}_0, (\tilde{b}_u, \tilde{\gamma}^u)_{u\in \tilde{V}} \big)$ and $\big( \bar{\mu}_0, (\bar{b}_u, \bar{\gamma}^u)_{u\in \bar{V}} \big)$ both satisfy Assumption \ref{assumption:MRF-1} and let $\tilde{P} \in \cP\big( (\cC_T^d)^{\tilde{V}} \big)$ and $\bar{P} \in \cP\big( (\cC_T^d)^{\bar{V}} \big)$ be the corresponding unique laws of SDE \eqref{eq:locally-interaction}. Additionally, suppose that:
		\begin{align}
            \label{def:b-consistency} 
            \forall v \in A \cup \partial^2 A, \quad \tilde{b}_v \equiv \bar{b}_v
            \quad \mbox{and}\quad
            \forall v \in \tilde{V} \cap \bar{V}, \quad \tilde{\gamma}^v = \bar{\gamma}^v. 
        \end{align}
        Further, for each $v \in \tilde{V} \cup \bar{V}$ there exists $\lambda_v \in \cP(\bR^d)$ such that the product measure
        \begin{align}
            &\mu_0^* = \prod_{v \in \tilde{V} \cup \bar{V}} \lambda_v \in \cP\big( (\bR^d)^{\tilde{V} \cup \bar{V}} \big)
            \quad \mbox{such that}
            \\
            \label{mu0-factorization}
            &\frac{d\tilde{\mu}_0}{d\mu_0^{*,\tilde{V}}} \big( x^{\tilde{V}} \big) = \prod_{K \in \clq{\tilde{G}}{2}} \tilde{f}_K\big( x^K \big), 
            \quad \mbox{and}\quad 
            \frac{d\bar{\mu}_0}{d\mu_0^{*,\bar{V}}} \big( x^{\bar{V}} \big) = \prod_{K \in \clq{\bar{V}}{2} } \bar{f}_K \big( x^K \big), 
        \end{align}
        for some measurable functions $(\tilde{f}_K: (\bR^d)^K \mapsto \bR_+ )_{K \in \clq{\tilde{G}}{2}}$ and $(\bar{f}_K: (\bR^d)^K \mapsto \bR_+)_{K \in \clq{\bar{G}}{2}}$ and
        \begin{equation*}
            \forall K \in \mathcal{K}_A 
            \quad
            \tilde{f}_K \equiv \bar{f}_K
        \end{equation*}
        where $\cK_A$ are defined as in Equation \eqref{eq:cliques}. Then $\tilde{P}_t^A\big[ \cdot \big| \partial^2A \big] = \bar{P}_t^A\big[ \cdot \big|  \partial^2A \big]$ for each $t > 0$, both in the sense of $\tilde{P}_t^{\partial^2A}$-almost sure and $\bar{P}_t^{\partial^2A}$-almost sure. 
	\end{proposition}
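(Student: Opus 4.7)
The plan is to compute the two conditional distributions explicitly via the 2-clique factorisations of the Radon--Nikodym derivatives established in the proof of Theorem \ref{theorem:MRF-1}, and to verify that the clique factors relevant for these conditional densities coincide under the hypotheses of the proposition.

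First I would invoke Theorem \ref{theorem:MRF-1} to conclude that $\tilde P_t$ and $\bar P_t$ are both 2-MRFs, so that under each measure the conditional law of $X^A[t]$ given $X^{\partial^2 A}[t]$ agrees with the conditional law given $X^{V \setminus A}[t]$. This reduces the problem to equality of the two latter conditional laws, which I would address through explicit densities. Re-running the computation from the proof of Theorem \ref{theorem:MRF-1}, the Radon--Nikodym derivative $d\tilde P_t / d(\mu_0^{*, \tilde{V}} \times \tilde\gamma^{\tilde{V}})$ factorises as a product over $K \in \clq{\tilde{G}}{2}$ of functions $\tilde\Phi_K(x_0^K, x^K[t])$, where $\tilde\Phi_K$ combines the initial-condition factor $\tilde f_K$ from Equation \eqref{mu0-factorization} with a Girsanov exponential of the form $\cZ_t[\int_0^{\cdot} \tilde b_u(s, x^{\{u\}\cup N_u}[s])\, ds]$ whenever $K = \{u\} \cup N_u$ arises as the support clique of a drift term. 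An identical statement holds under $\bar P_t$ with functions $\bar\Phi_K$.

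Next I would observe that the conditional density of $X^A[t]$ given $X^{V \setminus A}[t]$ under $\tilde P_t$ is proportional, as a function of $x^A[t]$, to the restricted product over cliques $K \in \clq{\tilde{G}}{2}$ with $K \cap A \neq \emptyset$, since factors disjoint from $A$ cancel against the normaliser. Any 2-clique touching $A$ is contained in $A \cup \partial^2 A \subset V^*$ by the diameter-at-most-$2$ property together with the definition of $\partial^2 A$, so condition \eqref{asmp:edgesets} guarantees that these cliques, and the neighbourhoods $N_u$ for $u$ appearing in them, coincide under $\tilde{G}$ and $\bar{G}$. The index sets of the two restricted products therefore match, and the factors agree term by term: $\tilde b_v \equiv \bar b_v$ on $A \cup \partial^2 A$ gives equality of the Girsanov exponentials, $\tilde\gamma^v = \bar\gamma^v$ on $V^*$ provides the same Gaussian reference measure and projection operator $\Pi_t$, and $\tilde f_K \equiv \bar f_K$ on $\cK_A$ handles the initial-condition part. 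After normalisation the conditional densities agree, yielding the desired equality.

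The main obstacle I expect is the combinatorial bookkeeping surrounding condition \eqref{asmp:edgesets} and the definition of $\cK_A$: I need to confirm that every drift-supporting star clique $\{u\} \cup N_u$ appearing in the restricted product is intrinsic to both graphs, including the case $u \in \partial A$ where the star touches $A$ only through its neighbourhood, and that the null sets arising from the almost-sure statements behave coherently under the two distinct marginals $\tilde P_t^{\partial^2 A}$ and $\bar P_t^{\partial^2 A}$. Once that compatibility is pinned down, the remainder is a direct application of the almost-sure-equality clause of Proposition \ref{proposition:2HammersleyClifford} that was already used in the finite-graph argument for Theorem \ref{theorem:MRF-1}.
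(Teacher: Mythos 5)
Your proposal is correct and follows essentially the same route as the paper: both write the laws as products of the initial-condition $2$-clique factors with per-vertex Girsanov exponentials $\cZ_t[\int_0^\cdot b_u\,ds]$, observe that the factors whose cliques meet $A$ (namely the stars $\{u\}\cup N_u$ for $u\in A\cup\partial A$, all contained in $V^*$) coincide by \eqref{asmp:edgesets}, \eqref{def:b-consistency} and $\tilde f_K\equiv\bar f_K$ on $\cK_A$, and then conclude by the cancellation of factors disjoint from $A$ in the conditional density. The only cosmetic difference is that the paper packages this last cancellation step as an appeal to Lemma \ref{le:specification-abstract} (and notes at the end that absolute continuity of $\tilde P_t^{\partial^2 A}$ and $\bar P_t^{\partial^2 A}$ with respect to $P_t^{*}[\partial^2 A]$ upgrades the almost-sure statement to the required one), whereas you re-derive its content by hand.
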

 
	\begin{proof}
        Using that $(\cC_T^d)^{\tilde{V} \cup \bar{V}} \equiv (\bR^d)^{\tilde{V} \cup \bar{V}} \times (\cC_{0, T}^d)^{\tilde{V} \cup \bar{V}}$, we denote $P^* \in \cP\big( (\cC_T^d)^{\tilde{V} \cup \bar{V}} \big)$ to be the product measure 
        \begin{equation*}
            P^* = \mu_0 \times \bigg( \prod_{u\in \tilde{V}} \tilde{\gamma}^u \cdot \prod_{u \in \bar{V} \backslash \tilde{V}} \bar{\gamma}^u \bigg). 
        \end{equation*}
        Working on the canonical probability space 
        \begin{equation*}
            \Big( (\cC_T^d)^{\tilde{V} \cup \bar{V}}, \cB'\big( (\cC_T^d)^{\tilde{V} \cup \bar{V}}\big), P^* \Big)
        \end{equation*}
        and recalling Equation \eqref{eq:stochasticExpon}, for $t\in [0,T]$ we denote:
		\begin{align*}
            &\mbox{for each $u\in \tilde{V}$}
            \\
            &t \mapsto \tilde{\cZ}_t^u\big( X^V \big)
            =
            \exp\Bigg( \delta\bigg( \Pi_t\Big[ \int_0^{\cdot} \tilde{b}_u\big(s, X^u, X^{N_u} \big) ds \Big] \bigg) - \tfrac{1}{2} \bigg\| \int_0^{\cdot} \tilde{b}_u\big( s, X^u, X^{N_u} \big) ds \bigg\|_{\RKHS_t}^2 \Bigg);
            \\
            &\mbox{for each $u\in \bar{V}$}
            \\
            &t \mapsto \bar{\cZ}_t^u\big( X^V \big)
            =
            \exp\Bigg( \delta\bigg( \Pi_t \Big[ \int_0^{\cdot} \bar{b}_u\big( s, X^u, X^{N_u} \big) ds \Big] \bigg) - \tfrac{1}{2} \bigg\| \int_0^{\cdot} \bar{b}_u\big( s, X^u, X^{N_u} \big) ds \bigg\|_{\RKHS_t}^2 \Bigg). 
		\end{align*}
		Then by Theorem \ref{theorem:ap:girsanov} and Equation \eqref{mu0-factorization}, we have
		\begin{align*}
            \frac{d\tilde{P}_t}{dP^{*, \tilde{V}}_t} = \prod_{K \in \clq{\tilde{G}}{2}} \tilde{f}_K \big( X_0^K \big) \cdot \prod_{v \in \tilde{V}} \tilde{\cZ}_t^v
            \quad \mbox{and}\quad
			\frac{d\bar{P}_t}{dP_t^{*,\bar{V}}} = \prod_{K \in \clq{\bar{G}}{2}} \bar{f}_K\big( X_0^K \big) \cdot \prod_{v \in \bar{V}} \bar{\cZ}_t^v. 
		\end{align*}
		Finally, due to \eqref{asmp:edgesets} we have that $u \in A \cup \partial A$ implies $\tilde{N}_v = \bar{N}_u$ so that $\tilde{\cZ}_t^u = \bar{\cZ}_t^u$ for $u \in A \cup \partial A$. 
		
		Applying Lemma \ref{le:specification-abstract}, it follows that $\tilde{P}_t^A\big[ \cdot \big| \partial^2A \big] = \bar{P}_t^A\big[ \cdot \big| \partial^2A \big]$ holds in the sense of $P^*_t[\partial^2A]$-almost sure equality. Since both $P^H_t[\partial^2A]$ and $P^G_t[\partial^2A]$ are absolutely continuous with respect to $P^*_t[\partial^2A]$, the claim follows. 
	\end{proof}

    \begin{theorem}
		\label{theorem:MRF-2*}
		Let $(V, E)$, $M \in L^1\big([0,T]; \bR \big)$, $\mu_0 \in \cP( (\bR^d)^{V} \big)$ and $(\gamma^u, b_u)_{u\in V}$ satisfy Assumption \ref{assumption:MRF-2}. Then for any $t\in [0,T]$, the unique in law solution to Equation \eqref{eq:locally-interaction}
        \begin{equation*}
            \bP\circ \Big( X^V[t] \Big)^{-1} \in \cP\Big( (\cC_T^d)^{V} \Big)
        \end{equation*}
        is a 2-MRF. 
	\end{theorem}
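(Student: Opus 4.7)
The strategy is to reduce the countably infinite case to Theorem \ref{theorem:MRF-1} via a finite graph exhaustion, using Proposition \ref{pr:specification-finitegraph} as the crucial stability tool. Fix a finite $A \subset V$ and a finite $B \subset V \setminus (A \cup \partial^2 A)$; since $(V,E)$ is locally finite, $\partial^2 A$ is also finite, so it suffices to show that $X^A[t]$ is conditionally independent of $X^B[t]$ given $X^{\partial^2 A}[t]$ under the law $P$ of the unique weak solution provided by Theorem \ref{theorem:ExUn}. Once this is established for arbitrary such finite $B$, the tower/monotone-class argument transfers it to the full $(A \cup \partial^2 A)^c$.

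Step one (finite exhaustion). Choose an exhausting sequence $V_n \uparrow V$ of finite subsets with $A \cup \partial^2 A \cup B \subset V_n$, and consider the induced subgraph $G_n = (V_n, E_n)$. On $G_n$ define a truncated interaction: for $u \in V_n$ with $N_u \subset V_n$ take $b_u^{(n)} = b_u$, and for boundary $u \in V_n$ with $N_u \not\subset V_n$ take $b_u^{(n)}$ obtained by freezing (or integrating against the Gaussian reference measure) the missing coordinates. The initial law is the marginal $\mu_0^{V_n}$, which, since $\mu_0$ is a 2-MRF equivalent to $\prod_{v \in V} \lambda_v$, inherits a 2-clique factorization relative to $\prod_{v \in V_n} \lambda_v$ required by Assumption \ref{assumption:MRF-1}. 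Theorem \ref{theorem:ExUn} yields a unique law $P^{(n)}$ on $(\cC_T^d)^{V_n}$, and Theorem \ref{theorem:MRF-1} shows $P^{(n)}$ is a 2-MRF on $G_n$.

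Step two (graph reduction). For each fixed $n$, construct a second finite graph $\bar{G}_n$ from $G_n$ by deleting the vertices $B$ (and any connecting buffer), retaining $A \cup \partial^2 A$. The neighbourhoods of every $v \in A \cup \partial A$ agree in $G_n$ and $\bar{G}_n$, so the edge-compatibility hypothesis \eqref{asmp:edgesets} is satisfied, and the matching of drifts and initial-law 2-clique factors from \eqref{def:b-consistency}--\eqref{mu0-factorization} holds due to the consistency of the $\lambda_v$ and of the truncated drifts. Proposition \ref{pr:specification-finitegraph} then yields
\begin{equation*}
    P^{(n), A}_t\big[ \, \cdot \, \big| X^{\partial^2 A}[t] \big] \;=\; \bar{P}^{(n), A}_t\big[ \, \cdot \, \big| X^{\partial^2 A}[t] \big],
\end{equation*}
so under $P^{(n)}$ the conditional law of $X^A[t]$ given $X^{\partial^2 A}[t]$ is a measurable function of $X^{\partial^2 A}[t]$ alone; in particular, $X^A[t]$ is conditionally independent of $X^B[t]$ given $X^{\partial^2 A}[t]$ under $P^{(n)}$.

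Step three (limit). Pass $n \to \infty$ and show that the marginal $P^{(n)}|_{A \cup \partial^2 A \cup B}$ converges to $P|_{A \cup \partial^2 A \cup B}$. Both measures are Girsanov transforms of the common reference $\mu_0^{V_n} \times \prod_{u \in V_n} \gamma^u$ with exponential weights of the form \eqref{eq:stochasticExpon}, so this reduces to $L^1$ convergence of the corresponding Dol\'eans--Dade exponentials. Uniform integrability follows from the envelope bound \eqref{eq:assumption:MRF-1-linear} combined with Fernique's theorem on each $\cE^u$ and the moment bound \eqref{eq:assumption:MRF-2.int}, exactly as in the proof of Theorem \ref{theorem:ExUn} in the infinite case. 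Conditional independence being a closed property under joint weak convergence with a continuous (indeed absolutely continuous) conditioning variable, the Markov property transfers to $P$. The main obstacle is step two: one must verify that the truncated drifts $b_u^{(n)}$ and the cut-down factors on the initial law can be chosen simultaneously to satisfy both Assumption \ref{assumption:MRF-1} and the consistency hypotheses of Proposition \ref{pr:specification-finitegraph} for both $G_n$ and $\bar{G}_n$; this is a bookkeeping task controlled by the local finiteness of the graph and the product structure \eqref{eq:assumption:MRF-1-InitCond} on cliques, but it is where the bulk of the technical work lies and mirrors the analogous truncation in \cite{lacker2020Locally}.
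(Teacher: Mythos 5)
Your overall strategy (finite truncation, Theorem \ref{theorem:MRF-1}, Proposition \ref{pr:specification-finitegraph}, pass to the limit) is the right one, but two steps as written would fail. First, in Step one you take $G_n$ to be the \emph{induced} subgraph on $V_n$ and assert that $\mu_0^{V_n}$ ``inherits a 2-clique factorization'' relative to $\prod_{v\in V_n}\lambda_v$. This is false in general: marginalising a Markov random field onto a subset creates dependencies among the boundary vertices, so the marginal need not factorise over the 2-cliques of the induced subgraph. The paper's Definition \ref{definition:graph_truncation} is designed precisely to repair this: the truncated graph $G_n=(V_n,E_n)$ augments the induced edge set with a \emph{complete graph on the outer shell} $U_n=V_n\setminus V_{n-2}$, and Lemma \ref{le:MRFprojections} then shows that $\mu_0^{V_n}$ is a 2-MRF and admits a 2-clique factorisation with respect to this enlarged graph (with the drifts set to zero on $U_n$ as in Equation \eqref{driftn}, rather than frozen or integrated out). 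Without this modification, Assumption \ref{assumption:MRF-1} cannot be verified for the truncated system.

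Second, and more seriously, your Step three rests on the claim that conditional independence is ``closed under joint weak convergence.'' It is not: conditioning is not a weakly continuous operation, and limits of measures admitting a 2-clique factorisation need not admit one (this is exactly why the infinite-graph case cannot be handled by Proposition \ref{proposition:2HammersleyClifford} directly). Your Step two does not supply what is needed to circumvent this, because comparing $G_n$ with a $B$-deleted graph $\bar G_n$ only re-expresses the conditional law of $X^A[t]$ given $X^{\partial^2 A}[t]$ under $P^{(n)}$ --- a statement that carries no information beyond the 2-MRF property of $P^{(n)}$ already obtained in Step one, and which still depends on $n$. The paper instead applies Proposition \ref{pr:specification-finitegraph} with $\tilde G=G_n$ and $\bar G=G_{n_0}$ for a \emph{fixed} $n_0$ with $A\cup\partial^2_G A\cup B\subset V_{n_0-3}$, concluding that $P^{n,A}_t[\,\cdot\,|\,\partial^2A]=P^{n_0,A}_t[\,\cdot\,|\,\partial^2A]$ for all $n\ge n_0$. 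This yields a single measurable $\varphi$, independent of $n$, with $\varphi(X^{\partial^2A}[t])=\bE^{P^n}[f(X^A[t])\,|\,X^{\partial^2A}[t]]$; one then passes to the limit in the \emph{unconditional} identity
\begin{equation*}
\bE^{P^n}\Big[f\big(X^A[t]\big)g\big(X^{\partial^2A}[t]\big)h\big(X^B[t]\big)\Big]
=\bE^{P^n}\Big[\varphi\big(X^{\partial^2A}[t]\big)g\big(X^{\partial^2A}[t]\big)h\big(X^B[t]\big)\Big]
\end{equation*}
using Lemma \ref{le:infinitegraphlimit} (whose proof requires the entropy bounds of Proposition \ref{pro:tightness}, not merely Fernique). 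The stabilisation of the conditional law in $n$ is the essential idea that your proposal is missing; without it the limit argument does not close.
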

    Notice that Theorem \ref{theorem:MRF-2} follows immediately from Theorem \ref{theorem:MRF-2*}. 
	\begin{proof}
		Let $(V, E)$ be a countably infinite locally finite connected graph, fix $\big( \mu_0, (b_u, \gamma^u)_{u\in V} \big)$ and let $X^V = (X^v)_{v \in V} \in (\cC_T^d)^{V}$ be the weak solution to Equation \eqref{eq:locally-interaction}. For $n \geq 4$, let $G_n = (V_n, E_n)$ be the sequence of finite graphs defined in Definition \ref{definition:graph_truncation} below.

		First note that by Assumption \ref{assumption:MRF-2} and Lemma \ref{le:MRFprojections}, the marginal $\mu_0^{V_n}$ is a 2MRF with respect to the graph $G_n$. Moreover, the Radon-Nikodym derivative 
        \begin{equation*}
            \frac{d\mu_0^{V_n}}{d\mu_0^{*,V_n}}
        \end{equation*}
        is strictly positive by Assumption (\ref{assumption:MRF-2}) and  Proposition \ref{proposition:2HammersleyClifford} demonstrates that $\mu_0^{V_n}$ admits a $2$-clique factorization with respect to the product measure $\mu_0^{*,V_n}$ for each $n$. Further, courtesy of Equation \eqref{driftn} we conclude that $\big( \mu_0^{V_n}, (b_u^n, \cH^u)_{u \in V_n} \big)$ satisfy Assumption \ref{assumption:MRF-1}. As $P^{n, V_n}$ is the law of the SDE \eqref{eq:truncatedSDE} on the finite graph $G_n$, by Theorem \ref{theorem:MRF-1} it is a 2-MRF.
		
        Now, fix two finite sets $A, B \subset V$ with $B$ disjoint of $A \cup \partial^2A$. Let $n_0$ denote the smallest integer greater than or equal to $4$ for which $A \cup \partial_G^2 A \cup B \subset V_{n_0-3}$, and let $n \geq n_0$. Then, Lemma \ref{le:MRFprojections} implies that $\mu_0^{V_n}$ and $\mu_0^{V_{n_0}}$ admit $2$-clique factorisation which are consistent in the sense that the corresponding measurable functions $f_K^{G_n}$ and $f_K^{G_{n_0}}$ agree for every $K \in \clq{G_{n_0}}{2}$ that intersects $A$ (equivalently, for every $K \in \clq{G}{2}$ that intersects $A$).  

        Following Equation \eqref{driftn}, for all $v \in A \cup \partial_G^2 A$ we have $b_v^{n} = b_v^{n_0} = b_v$ so we apply Proposition \ref{pr:specification-finitegraph}, with $\tilde{G} = G_{n}$, $\bar{G} = G_{n_0}$ and $V^* = V_{n_0-3}$, 
        \begin{equation*}
            \mbox{for } k \in \{n_0, ..., n\}
            \quad
            \mu_0^{G_k} = \mu_0^{V_k}, 
            \quad
            \mbox{for }  
            \big( b^{k}_v)_{v \in G_k} = \big( b^k_v \big)_{v \in G_k}, 
        \end{equation*}
        to deduce that for all $n \geq n_0$, $P^{n,A}_t\big[ \cdot \big| \partial^2 A \big] = P_t^{n_0, A}\big[ \cdot \big| \partial^2A \big]$. 
        
		Thus, given a bounded continuous function $f: (\cC_T^d)^A \to \bR$, there exists a measurable function $\varphi$ (that does not depend on $n$) such that for $n \geq n_0$
		\begin{equation}
			\label{phi-meas}
			\varphi\big( X^{\partial^2 A}[t] \big) = \bE^{P^n}\Big[ f(X^A[t]) \Big|  X^{\partial^2 A}[t] \Big] \quad P^n\mbox{-almost surely}.
		\end{equation}
		Now, fix additional bounded continuous functions $g: (\cC_T^d)^{\partial^2A} \to \bR$ and $h: (\cC_T^d)^B \to \bR$. For $t > 0$, taking the conditional expectation with respect to $X^{V_n \setminus A}[t]$ inside the expectation on the left-hand side below and using the 2-MRF property of $P^n$ and Equation \eqref{phi-meas}, we have
		\begin{align*}
			\bE^{P^n}\Big[ f\big(X^A[t] \big) g\big( X^{\partial^2A}[t] \big) h\big( X^B[t] \big) \Big] 
            &= 
            \bE^{P^n}\bigg[ \bE^{P^n}\Big[ f\big( X^A[t] \big) \Big| X^{\partial^2A} \Big] g\big( X^{\partial^2A}[t] \big) h\big( X^B[t] \big) \bigg]
            \\
            &= \bE^{P^n}\Big[ \varphi\big( X^{\partial^2A}[t] \big) g\big( X^{\partial^2A}[t] \big) h\big( X^B[t] \big) \Big]
		\end{align*}
		Applying Lemma \ref{le:infinitegraphlimit}, for the finite set $A' = A \cup \partial^2 A \cup B$ and for
        \begin{align*}
            \psi \big( y^{A'} \big) :=& f\big( y^A \big) g\big( y^{\partial^2A} \big) h\big( y^B \big)
            = 
            \varphi\big( y^{\partial^2A} \big) g\big( y^{\partial^2 A} \big) h\big( y^B \big),
        \end{align*}
		we pass to the limit $n \to \infty$ to get
		\begin{align*}
			\bE^{P}\Big[ f\big( X^A[t] \big) g\big( X^{\partial^2A}[t] \big) h\big( X^B[t]\big) \Big] 
            = 
            \bE^{P}\Big[ \varphi\big( X^{\partial^2A}[t] \big) g \big( X^{\partial^2A}[t] \big) h\big( X^B[t] \big) \Big].
		\end{align*}
		This at once shows both that
		\begin{equation*}
            \bE^P\Big[ f\big( X^A[t] \big) \Big| X^{\partial^2A}[t] \Big] 
            = 
            \varphi\big( X^{\partial^2A}[t] \big) 
            = 
            \bE^{P^n} \Big[ f\big( X^A[t] \big) \Big| X^{\partial^2A}[t] \Big],
		\end{equation*}
		for all bounded continuous $f$ and $n \ge n_0$, which proves that
        \begin{equation*}
            P_t^{A}\big[ \cdot \big| \partial^2A \big] 
            = 
            P_t^{n, A}\big[ \cdot \big| \partial^2A \big]. 
        \end{equation*}
        and also that $X^A[t]$ and $X^B[t]$ are conditionally independent given $X_{\partial^2A}[t]$ under $P$. 

	\end{proof} 

    
	\bibliographystyle{plain}
\begin{bibdiv}
\begin{biblist}

\bib{Baxter1989Exactly}{book}{
      author={Baxter, Rodney~J.},
       title={Exactly solved models in statistical mechanics},
   publisher={Academic Press, Inc. [Harcourt Brace Jovanovich, Publishers],
  London},
        date={1989},
        ISBN={0-12-083182-1},
        note={Reprint of the 1982 original},
      review={\MR{998375}},
}

\bib{Besag1974Spatial}{article}{
      author={Besag, Julian},
       title={Spatial interaction and the statistical analysis of lattice
  systems},
        date={1974},
        ISSN={0035-9246},
     journal={J. Roy. Statist. Soc. Ser. B},
      volume={36},
       pages={192\ndash 236},
  url={http://links.jstor.org/sici?sici=0035-9246(1974)36:2<192:SIATSA>2.0.CO;2-3&origin=MSN},
      review={\MR{373208}},
}

\bib{bogachev1998gaussian}{book}{
      author={Bogachev, Vladimir~I.},
       title={Gaussian measures},
      series={Mathematical Surveys and Monographs},
   publisher={American Mathematical Society, Providence, RI},
        date={1998},
      volume={62},
        ISBN={0-8218-1054-5},
         url={https://doi.org/10.1090/surv/062},
      review={\MR{1642391}},
}

\bib{Cattiaux1996Une}{article}{
      author={Cattiaux, P.},
      author={R{\oe}lly, S.},
      author={Zessin, H.},
       title={Une approche gibbsienne des diffusions browniennes
  infini-dimensionnelles},
        date={1996},
        ISSN={0178-8051,1432-2064},
     journal={Probab. Theory Related Fields},
      volume={104},
      number={2},
       pages={147\ndash 179},
         url={https://doi.org/10.1007/BF01247836},
      review={\MR{1373374}},
}

\bib{Dereudre2003Interacting}{article}{
      author={Dereudre, David},
       title={Interacting {B}rownian particles and {G}ibbs fields on
  pathspaces},
        date={2003},
        ISSN={1292-8100,1262-3318},
     journal={ESAIM Probab. Stat.},
      volume={7},
       pages={251\ndash 277},
         url={https://doi.org/10.1051/ps:2003012},
      review={\MR{1987789}},
}

\bib{Deuschel1987Infinite}{article}{
      author={Deuschel, J.-D.},
       title={Infinite-dimensional diffusion processes as {G}ibbs measures on
  {$C[0,1]^{{\bf Z}^d}$}},
        date={1987},
        ISSN={0178-8051,1432-2064},
     journal={Probab. Theory Related Fields},
      volume={76},
      number={3},
       pages={325\ndash 340},
         url={https://doi.org/10.1007/BF01297489},
      review={\MR{912658}},
}

\bib{Dereudre2005Propagation}{article}{
      author={Dereudre, David},
      author={R{\oe}lly, Sylvie},
       title={Propagation of {G}ibbsianness for infinite-dimensional gradient
  {B}rownian diffusions},
        date={2005},
        ISSN={0022-4715,1572-9613},
     journal={J. Stat. Phys.},
      volume={121},
      number={3-4},
       pages={511\ndash 551},
         url={https://doi.org/10.1007/s10955-005-7580-2},
      review={\MR{2185338}},
}

\bib{Decreusfond1999Stochastic}{article}{
      author={Decreusefond, L.},
      author={\"{U}st\"{u}nel, A.~S.},
       title={Stochastic analysis of the fractional {B}rownian motion},
        date={1999},
        ISSN={0926-2601},
     journal={Potential Anal.},
      volume={10},
      number={2},
       pages={177\ndash 214},
         url={https://doi.org/10.1023/A:1008634027843},
      review={\MR{1677455}},
}

\bib{DemboZeitouni2010}{book}{
      author={Dembo, Amir},
      author={Zeitouni, Ofer},
       title={Large deviations techniques and applications},
      series={Stochastic Modelling and Applied Probability},
   publisher={Springer-Verlag, Berlin},
        date={2010},
      volume={38},
        ISBN={978-3-642-03310-0},
         url={https://doi.org/10.1007/978-3-642-03311-7},
        note={Corrected reprint of the second (1998) edition},
      review={\MR{2571413}},
}

\bib{Galeati2022Prevalence}{article}{
      author={Galeati, Lucio},
      author={Gubinelli, Massimiliano},
       title={Prevalence of $\rho$-irregularity and related properties},
        date={2020},
     journal={arXiv preprint},
      eprint={2004.00872},
}

\bib{Galeati2022Solution}{article}{
      author={Galeati, Lucio},
      author={Gerencs\'er, M\'at\'e},
       title={Solution theory of fractional sdes in complete subcritical
  regimes},
        date={2022},
     journal={arXiv preprint},
      eprint={2207.03475},
}

\bib{Grimmett1973Theorem}{article}{
      author={Grimmett, G.~R.},
       title={A theorem about random fields},
        date={1973},
        ISSN={0024-6093,1469-2120},
     journal={Bull. London Math. Soc.},
      volume={5},
       pages={81\ndash 84},
         url={https://doi.org/10.1112/blms/5.1.81},
      review={\MR{329039}},
}

\bib{Horn2013MAtrix}{book}{
      author={Horn, Roger~A.},
      author={Johnson, Charles~R.},
       title={Matrix analysis},
     edition={Second},
   publisher={Cambridge University Press, Cambridge},
        date={2013},
        ISBN={978-0-521-54823-6},
      review={\MR{2978290}},
}

\bib{Hopfield1982Neural}{article}{
      author={Hopfield, J.~J.},
       title={Neural networks and physical systems with emergent collective
  computational abilities},
        date={1982},
        ISSN={0027-8424},
     journal={Proc. Nat. Acad. Sci. U.S.A.},
      volume={79},
      number={8},
       pages={2554\ndash 2558},
         url={https://doi.org/10.1073/pnas.79.8.2554},
      review={\MR{652033}},
}

\bib{Hu2023Fundamental}{article}{
      author={Hu, Kevin},
      author={Ramanan, Kavita},
      author={Salkeld, William},
       title={A {M}imicking {T}heorem for processes driven by fractional
  {B}rownian motion},
        date={2024},
     journal={arXiv preprint},
      eprint={2405.xxxxx},
}

\bib{Kleptsyna2000General}{article}{
      author={Kleptsyna, M.~L.},
      author={Le~Breton, A.},
      author={Roubaud, M.-C.},
       title={General approach to filtering with fractional {B}rownian
  noises---application to linear systems},
        date={2000},
        ISSN={1045-1129},
     journal={Stochastics Stochastics Rep.},
      volume={71},
      number={1-2},
       pages={119\ndash 140},
      review={\MR{1813509}},
}

\bib{Kleptsyna2000Parameter}{incollection}{
      author={Kleptsyna, M.~L.},
      author={Le~Breton, A.},
      author={Roubaud, M.-C.},
       title={Parameter estimation and optimal filtering for fractional type
  stochastic systems},
        date={2000},
      volume={3},
       pages={173\ndash 182},
         url={https://doi.org/10.1023/A:1009923431187},
        note={19th ``Rencontres Franco-Belges de Statisticiens'' (Marseille,
  1998)},
      review={\MR{1819294}},
}

\bib{karatzasShreve}{book}{
      author={Karatzas, Ioannis},
      author={Shreve, Steven~E.},
       title={Brownian motion and stochastic calculus},
     edition={Second},
      series={Graduate Texts in Mathematics},
   publisher={Springer-Verlag, New York},
        date={1991},
      volume={113},
        ISBN={0-387-97655-8},
         url={https://doi.org/10.1007/978-1-4612-0949-2},
      review={\MR{1121940}},
}

\bib{lacker2020Locally}{article}{
      author={Lacker, Daniel},
      author={Ramanan, Kavita},
      author={Wu, Ruoyu},
       title={Locally interacting diffusions as {M}arkov random fields on path
  space},
        date={2021},
        ISSN={0304-4149,1879-209X},
     journal={Stochastic Process. Appl.},
      volume={140},
       pages={81\ndash 114},
         url={https://doi.org/10.1016/j.spa.2021.06.007},
      review={\MR{4276494}},
}

\bib{lacker2020marginal}{article}{
      author={Lacker, Daniel},
      author={Ramanan, Kavita},
      author={Wu, Ruoyu},
       title={Marginal dynamics of interacting diffusions on unimodular
  {G}alton-{W}atson trees},
        date={2023},
        ISSN={0178-8051,1432-2064},
     journal={Probab. Theory Related Fields},
      volume={187},
      number={3-4},
       pages={817\ndash 884},
         url={https://doi.org/10.1007/s00440-023-01226-4},
      review={\MR{4664586}},
}

\bib{Medveded2019Continuum}{article}{
      author={Medvedev, Georgi~S.},
       title={The continuum limit of the {K}uramoto model on sparse random
  graphs},
        date={2019},
        ISSN={1539-6746,1945-0796},
     journal={Commun. Math. Sci.},
      volume={17},
      number={4},
       pages={883\ndash 898},
         url={https://doi.org/10.4310/CMS.2019.v17.n4.a1},
      review={\MR{4030504}},
}

\bib{Mishura2020Gaussian}{article}{
      author={Mishura, Yuliya},
      author={Shevchenko, Georgiy},
      author={Shklyar, Sergiy},
       title={Gaussian processes with {V}olterra kernels},
        date={2020},
     journal={arXiv preprint},
      eprint={2001.03405},
}

\bib{Nualart2002Regularization}{article}{
      author={Nualart, David},
      author={Ouknine, Youssef},
       title={Regularization of differential equations by fractional noise},
        date={2002},
        ISSN={0304-4149},
     journal={Stochastic Process. Appl.},
      volume={102},
      number={1},
       pages={103\ndash 116},
         url={https://doi.org/10.1016/S0304-4149(02)00155-2},
      review={\MR{1934157}},
}

\bib{Nadtochiy2020Mean}{article}{
      author={Nadtochiy, Sergey},
      author={Shkolnikov, Mykhaylo},
       title={Mean field systems on networks, with singular interaction through
  hitting times},
        date={2020},
        ISSN={0091-1798,2168-894X},
     journal={Ann. Probab.},
      volume={48},
      number={3},
       pages={1520\ndash 1556},
         url={https://doi.org/10.1214/19-AOP1403},
      review={\MR{4112723}},
}

\bib{nualart2006malliavin}{book}{
      author={Nualart, David},
       title={The {M}alliavin calculus and related topics},
     edition={Second},
      series={Probability and its Applications (New York)},
   publisher={Springer-Verlag, Berlin},
        date={2006},
        ISBN={978-3-540-28328-7},
      review={\MR{2200233}},
}

\bib{Norros1999Elementary}{article}{
      author={Norros, Ilkka},
      author={Valkeila, Esko},
      author={Virtamo, Jorma},
       title={An elementary approach to a {G}irsanov formula and other
  analytical results on fractional {B}rownian motions},
        date={1999},
        ISSN={1350-7265},
     journal={Bernoulli},
      volume={5},
      number={4},
       pages={571\ndash 587},
         url={https://doi.org/10.2307/3318691},
      review={\MR{1704556}},
}

\bib{Roelly2014Propagation}{article}{
      author={R{\oe}lly, S.},
      author={Ruszel, W.~M.},
       title={Propagation of {G}ibbsianness for infinite-dimensional diffusions
  with space-time interaction},
        date={2014},
        ISSN={1024-2953},
     journal={Markov Process. Related Fields},
      volume={20},
      number={4},
       pages={653\ndash 674},
      review={\MR{3308572}},
}

\bib{Redig2010Short}{article}{
      author={Redig, Frank},
      author={R{\oe}lly, Sylvie},
      author={Ruszel, Wioletta},
       title={Short-time {G}ibbsianness for infinite-dimensional diffusions
  with space-time interaction},
        date={2010},
        ISSN={0022-4715,1572-9613},
     journal={J. Stat. Phys.},
      volume={138},
      number={6},
       pages={1124\ndash 1144},
         url={https://doi.org/10.1007/s10955-010-9926-7},
      review={\MR{2601426}},
}

\bib{Sonine1884Sur}{article}{
      author={Sonine, N.},
       title={Sur la g\'{e}n\'{e}ralisation d'une formule d'{A}bel},
        date={1884},
        ISSN={0001-5962,1871-2509},
     journal={Acta Math.},
      volume={4},
      number={1},
       pages={171\ndash 176},
         url={https://doi.org/10.1007/BF02418416},
      review={\MR{1554635}},
}

\bib{Tao2011Introduction}{book}{
      author={Tao, Terence},
       title={An introduction to measure theory},
      series={Graduate Studies in Mathematics},
   publisher={American Mathematical Society, Providence, RI},
        date={2011},
      volume={126},
        ISBN={978-0-8218-6919-2},
         url={https://doi.org/10.1090/gsm/126},
      review={\MR{2827917}},
}

\bib{Xiao2006Properties}{article}{
      author={Xiao, Yimin},
       title={Properties of local-nondeterminism of {G}aussian and stable
  random fields and their applications},
        date={2006},
        ISSN={0240-2963,2258-7519},
     journal={Ann. Fac. Sci. Toulouse Math. (6)},
      volume={15},
      number={1},
       pages={157\ndash 193},
         url={http://afst.cedram.org/item?id=AFST_2006_6_15_1_157_0},
      review={\MR{2225751}},
}

\end{biblist}
\end{bibdiv}


    \appendix
    \section{Approximations of locally interacting equations}
    \label{section:appendix}
    
    \subsection{Graph truncation}
    \label{se:MRFs-projections}

    Firstly, we include a result from \cite{lacker2020Locally} which we shall use to establish that the 2-MRF property is retained under limits. 

    \begin{definition}
        \label{definition:graph_truncation}
        Let $G=(V, E)$ be a graph, fix $\oSlash \in V$ to be the root and let $n\geq 4$. We define
        \begin{align*}
            V_n:=& \{v\in V: d_G(v, \oSlash)\leq n\},
            \quad 
            U_n:=V_n \backslash V_{n-2}
            \quad \mbox{and}
            \\
            E_n:=& \big\{ (u, v) \in V_n \times V_n: (u, v) \in E \big\} \cup \big\{ (u, v) \in U_n \times U_n, u \neq v \big\}. 
        \end{align*}
    \end{definition}

    \begin{example}
        In order to illustrate the graph truncation described in Definition \ref{definition:graph_truncation}, consider the graph $V = \bZ$ and $E = \big\{ (i, i+1): i\in \bZ \big\}$. Choosing $n = 5$ and $\oSlash=0$, we obtain that
        \begin{align*}
            V_n =& \big\{ -5, -4, -3, -2, -1, 0, 1, 2, 3, 4, 5\} \quad U_n = \big\{ -5, -4, 4, 5\big\}, 
            \\
            E_n =& \big\{ (-5, -4), (-4, -3), (-3, -2), (-2, -1), (-1, 0), (0, 1), (1, 2), (2, 3), (3, 4), (4, 5) \big\} 
            \\
            &\cup \big\{ (-5, 4), (-5, 5), (-4, 4), (-4, 5) \big\}. 
        \end{align*}
        We can visualise this as the following
        \begin{equation*}
			\begin{tikzpicture}
                \node[vertex, label=above:{\footnotesize 0}] at (0,0) {}; 
                \node[vertex, label=above:{\footnotesize 1}] at (1,0) {}; 
                \node[vertex, label=above:{\footnotesize 2}] at (2,0) {}; 
                \node[vertex, label=above:{\footnotesize 3}] at (3,0) {}; 
                \node[vertex, label=above:{\footnotesize 4}] at (4,0) {}; 
                \node[vertex, label=above:{\footnotesize 5}] at (5,0) {}; 
                \node[vertex, label=above:{\footnotesize -1}] at (-1,0) {}; 
                \node[vertex, label=above:{\footnotesize -2}] at (-2,0) {}; 
                \node[vertex, label=above:{\footnotesize -3}] at (-3,0) {}; 
                \node[vertex, label=above:{\footnotesize -4}] at (-4,0) {}; 
                \node[vertex, label=above:{\footnotesize -5}] at (-5,0) {}; 
                \draw[edge] (-5,0) -- (-4, 0);
                \draw[edge] (-4,0) -- (-3, 0);
                \draw[edge] (-3,0) -- (-2, 0);
                \draw[edge] (-2,0) -- (-1, 0);
                \draw[edge] (-1,0) -- (0, 0);
                \draw[edge] (0,0) -- (1, 0);
                \draw[edge] (1,0) -- (2, 0);
                \draw[edge] (2,0) -- (3, 0);
                \draw[edge] (3,0) -- (4, 0);
                \draw[edge] (4,0) -- (5, 0);
                \draw [black] (-5,0) to[bend left=30] (4,0);
                \draw [black] (-5,0) to[bend left=30] (5,0);
                \draw [black] (-4,0) to[bend right=30] (4,0);
                \draw [black] (-4,0) to[bend right=30] (5,0);
            \end{tikzpicture}
        \end{equation*}
    \end{example}
    
    For any $A\subset V_{n-3}$, it holds that $\partial_G^2 A = \partial_{G_n}^2 A$. Further, for any $A' \subset V_{n-2}$, $\partial_{G}^2 A' = \partial_{G_n}^2 A'$. Further, if $K\in \clq{G}{2}$ satisfies that $K \subset V_n$, then $K\in \clq{G_n}{2}$.
    
    \begin{lemma}[\cite{lacker2020Locally}*{Lemma 4.2}]
        \label{le:MRFprojections}
        Let $G=(V, E)$ be a locally finite graph and let $(\cX, d)$ be a metric space. Suppose that a $\cX^{V}$-valued random variable $Y^V=(Y^v)_{v\in V}$ is a 2-MRF with respect to $G$. Then $Y^{V_n}$ is a 2-MRF with respect to $G_n = (V_n, E_n)$ as defined in Definition \ref{definition:graph_truncation}.
        
        Secondly, suppose that $V$ is finite and the law $\nu \in \cP(\cX^V)$ admits the following 2-clique factorisation with respect to a product measure $\nu^* = \prod_{v\in V} \mu_v \in \cP(\cX^{V})$ for some sequence of measures $(\mu_v)_{v\in V}$, 
        \begin{equation*}
            \frac{d\nu}{d\nu^*}\big( x^V \big) = \prod_{K \in \clq{G}{2}} f_K\big( x^K \big)
        \end{equation*}
        for some measurable functions $f_K: \cX^{K} \to \bR_+$ and for $K \in \clq{G}{2}$. 
        
        Then for any $n>4$, the marginal measure $\nu^{V_n} \in \cP(\cX^{V_n})$ admits a 2-clique factorisation
        \begin{equation*}
            \frac{d\nu^{V_n}}{d\nu^{*,V_n}} \big( x^{V_n} \big) = \prod_{K \in \clq{G_n}{2} } f_K^0 \big( x^K \big),
        \end{equation*}
        for some measurable functions $f_K^0: \cX^{K} \to \bR_+$ and $K\in \clq{G}{2}$. 
        
        In particular, $f_K^0 \equiv f_K$ for any $K\in \clq{G}{2}$ such that $K\subset V_{n-3}$. 
    \end{lemma}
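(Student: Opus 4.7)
The plan is to handle the two parts of the lemma separately, leaning throughout on the structural features of the truncation $G_n$. The key combinatorial facts about $G_n$ are that $U_n = V_n \setminus V_{n-2}$ forms a clique in $G_n$ and that all $G$-edges between vertices of $V_n$ are retained. From these, two geometric observations follow which underpin both parts: (i) for any $A \subseteq V_n$, the inclusion $\partial_G^2 A \cap V_n \subseteq \partial_{G_n}^2 A$ holds, and moreover any $u \in \partial_{G_n}^2 A \setminus \partial_G^2 A$ lies in $U_n$; and (ii) any $K \in \clq{G}{2}$ with $K \not\subseteq V_n$ satisfies $K \cap V_n \subseteq U_n$, because at least one vertex of $K$ lies outside $V_n$ (hence at $G$-distance $\geq n+1$ from $\oSlash$) and every other vertex of $K$ is within $G$-distance $2$ of it, forcing them into $V \setminus V_{n-2}$.

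For the first assertion, I would begin with the 2-MRF property of $Y^V$ in $G$, namely $Y^A \perp Y^{V \setminus (A \cup \partial_G^2 A)} \mid Y^{\partial_G^2 A}$, and split both sides according to their intersection with $V_n$ and $V \setminus V_n$. Invoking the graphoid axioms (decomposition and weak union) one transfers the conditioning from $Y^{\partial_G^2 A}$ to $Y^{\partial_{G_n}^2 A}$: the ``external'' part $\partial_G^2 A \setminus V_n$ can be removed from the conditioning set without breaking the independence because, by observation (i), it interacts with $V_n \setminus (A \cup \partial_{G_n}^2 A)$ only through the clique $U_n \subseteq \partial_{G_n}^2 A$, while the ``internal'' extras $\partial_{G_n}^2 A \setminus \partial_G^2 A$ are added back in via weak union. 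The resulting statement is exactly $Y^A \perp Y^{V_n \setminus (A \cup \partial_{G_n}^2 A)} \mid Y^{\partial_{G_n}^2 A}$.

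For the second assertion, I would split the product
\begin{equation*}
    \frac{d\nu}{d\nu^*}(x^V)
    =
    \prod_{K \in \clq{G}{2}} f_K(x^K)
    =
    \bigg( \prod_{\substack{K \in \clq{G}{2} \\ K \subseteq V_n}} f_K(x^K) \bigg)
    \cdot
    \bigg( \prod_{\substack{K \in \clq{G}{2} \\ K \not\subseteq V_n}} f_K(x^K) \bigg)
\end{equation*}
and integrate against $\prod_{v \notin V_n} d\mu_v(x^v)$. The first factor is already a product of $f_K$'s over 2-cliques of $G_n$ (distances in $G_n$ restricted to $V_n$ do not exceed the corresponding $G$-distances). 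The second factor depends on $x$ only through $x^{U_n}$ and $x^{V \setminus V_n}$ by observation (ii), so integration produces a single measurable function $g : \cX^{U_n} \to \bR_+$, which we absorb as the clique factor at the clique $U_n \in \clq{G_n}{2}$. Setting all remaining $f_K^0 \equiv 1$ yields the claimed 2-clique factorisation on $G_n$. Finally, a 2-clique $K \subseteq V_{n-3}$ has its entire 2-neighborhood in $V_{n-1}$, so it is disjoint from every clique that touches $V \setminus V_n$; hence $f_K$ passes through the integration untouched, giving $f_K^0 \equiv f_K$.

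The main obstacle is the first part: transferring the 2-MRF property from $Y^V$ on $G$ to $Y^{V_n}$ on $G_n$ without any positivity or density assumption requires a careful marshaling of graphoid axioms, and the delicate step is justifying the swap of ``external'' for ``internal'' conditioning. This swap is powered entirely by observation (i), namely that $\partial_{G_n}^2 A$ and $\partial_G^2 A \cap V_n$ differ only through vertices of the $G_n$-clique $U_n$, which makes them interchangeable conditioning sets for events measurable with respect to $Y^{V_n}$. The second part, by contrast, is essentially a bookkeeping exercise once the geometric observation (ii) is in hand.
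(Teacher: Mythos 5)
The paper does not prove this lemma at all --- it is imported verbatim as \cite{lacker2020Locally}*{Lemma 4.2} --- so your argument has to stand on its own, and the first assertion is where it does not. The step you yourself flag as delicate is a genuine gap. Decomposition and weak union only ever let you \emph{shrink} the independent side and \emph{enlarge} the conditioning set; neither lets you delete $\partial_G^2A\setminus V_n$ from the conditioning set, and that deletion is the entire content of the claim whenever $A$ reaches into $U_n$ (if $A\subseteq V_{n-3}$ then $\partial_G^2A\subseteq V_{n-1}\subseteq V_n$ and the statement follows immediately from decomposition and weak union). Your justification --- that the external boundary ``interacts with $V_n\setminus(A\cup\partial_{G_n}^2A)$ only through the clique $U_n$'' --- is a graph-separation statement, and converting separation into conditional independence is exactly the \emph{global} Markov property, which is not implied by the local 2-MRF hypothesis without positivity (the intersection axiom); avoiding that implication is the very reason this truncation machinery exists. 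Observation (i) is also not quite correct: a vertex of $\partial_{G_n}^2A\setminus\partial_G^2A$ need not lie in $U_n$ (take $a\in A\cap U_n$, $w\in U_n$ with $\{a,w\}\in E_n\setminus E$, and a $G$-neighbour $u$ of $w$ at distance $n-2$ from $\oSlash$), so the two boundaries do not differ ``only through $U_n$''; this is harmless for adding vertices by weak union but undermines the interchangeability you rely on. A correct treatment of the hard case has to use the geometry differently, e.g.\ by applying the $G$-2-MRF hypothesis to the complementary set $B=V_n\setminus(A\cup\partial_{G_n}^2A)$: when $A\cap U_n\neq\emptyset$ one has $U_n\cup\partial_{G_n}U_n\subseteq A\cup\partial_{G_n}^2A$, so $B$ is insulated from $V\setminus V_{n-2}$ and $\partial_G^2B\subseteq V_n$, after which one must still track the resulting conditioning set with care. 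As written, the first assertion is not proved.

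The second assertion is essentially right and is the standard marginalisation argument: split the clique product according to whether $K\subseteq V_n$, note that any $K\in\clq{G}{2}$ meeting $V\setminus V_n$ satisfies $K\cap V_n\subseteq U_n$, integrate out $x^{V\setminus V_n}$ against $\prod_{v\notin V_n}\mu_v$, and absorb the resulting function of $x^{U_n}$ into the $G_n$-clique $U_n$; since $U_n\cap V_{n-3}=\emptyset$, every $K\subseteq V_{n-3}$ keeps $f_K^0\equiv f_K$. One small repair: a $2$-clique $K\subseteq V_n$ of $G$ is a $2$-clique of $G_n$ not because $G_n$-distances are dominated by $G$-distances (a $G$-geodesic of length $2$ between points of $V_n$ may pass outside $V_n$), but because in that case both endpoints are forced into $U_n$ and are therefore $G_n$-adjacent.
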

 
	Let us briefly recall a notation we introduced more carefully: For $\nu \in \cP(\cX^V)$ and $A,B \subset V$ we write $\nu^A\big[ \cdot \big| B \big]$ for the conditional law of the $A$-coordinates given the $B$-coordinates. 
	\begin{lemma}[\cite{lacker2020Locally}*{Lemma 4.3}]
        \label{le:specification-abstract}
		Let $\tilde{G}=(\tilde{V}, \tilde{E})$ and $\bar{G}=(\bar{V}, \bar{E})$ be finite graphs and assume that $V^* \subset \tilde{V} \cap \bar{V}$ satisfies
		\begin{equation}
            \label{asmp:edgesets}
			\tilde{E} \cap (V^* \times V^*) = \bar{E} \cap (V^* \times V^*). 
		\end{equation}
		Further, let $A \subset V^*$ and suppose that $\partial^2_{\tilde{G}} A \subset V^*$ and $\partial^2_{\bar{G}} A \subset V^*$. 
  
		Next, let $\tilde{\nu} \in \cP\big( \cX^{\tilde{V}} \big)$ and $\bar{\nu} \in \cP\big( \cX^{\bar{V}} \big)$ and let $\gmu^* = \prod_{v \in \tilde{V} \cup \bar{V}} \glambda_v \in \cP\big( \cX^{\tilde{V} \cup \bar{V}} \big)$ where for each $v \in V_G \cup V_H$ we have that $\glambda_v \in \cP(\cX)$. Suppose that the densities factorise as 
		\begin{align*}
	        \frac{d\tilde{\nu}}{d\gmu^{*,\tilde{V}}} \big( x^{\tilde{V}} \big) =  \prod_{K \in \clq{G}{2}} \tilde{f}_K \big( x^K \big)	
            \quad\mbox{and}\quad
            \frac{d\bar{\nu}}{d\gmu^{*,\bar{V}}} \big( x^{\bar{V}} \big) =  \prod_{K \in \clq{\bar{G}}{2}} \bar{f}_K\big( x^K \big), 
		\end{align*} 
		for measurable functions $(\bar{f}_K: \cX^K \mapsto \bR_+ )_{K \in \clq{\bar{G}}{2}}$ and $(\tilde{f}_K: \cX^K \mapsto \bR_+ )_{K \in \clq{G}{2}}$, and 
        \begin{align}
            \nonumber
            &\forall K \in \mathcal{K}_A 
            \quad
            f_K^H \equiv f_K^G
            \quad \mbox{where}
            \\
            \label{eq:cliques}
            &\cK_A:= \big\{ K \in \clq{\tilde{G}}{2} : K \cap A \neq \emptyset \big\} = \big\{ K \in \clq{\bar{G}}{2} : K \cap A \neq \emptyset \big\}
        \end{align}
		Then  $\tilde{\nu}^A \big[ \cdot \big| \partial^2 A] = \bar{\nu}^A\big[ \cdot \big| \partial^2 A \big]$, almost surely with respect to $\gmu^*[\partial^2 A]$. 
	\end{lemma}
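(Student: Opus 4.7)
The plan is to separate each density into a product of factors that touch $A$ and a product of factors that avoid $A$, and then to show that upon conditioning on $\partial^2 A$ only the first product survives. I would first observe that for any 2-clique $K$ intersecting $A$, every vertex of $K$ lies within graph-distance 2 of some vertex of $A$, hence $K \subset A \cup \partial^2 A$. Since $A \cup \partial^2_{\tilde G} A \subset V^*$ and $A \cup \partial^2_{\bar G} A \subset V^*$, and the edges within $V^*$ coincide in $\tilde G$ and $\bar G$ by hypothesis \eqref{asmp:edgesets}, the set $\cK_A$ is indeed well-defined as a common collection of 2-cliques (any set of diameter $\leq 2$ in $V^*$ is a 2-clique of $\tilde G$ iff of $\bar G$).

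Next, I would write
\begin{align*}
    \frac{d\tilde{\nu}}{d\gmu^{*,\tilde{V}}}\big(x^{\tilde V}\big)
    &= F\big(x^{A}, x^{\partial^2 A}\big) \cdot \tilde{g}\big(x^{\tilde V \setminus A}\big),
    \\
    F\big(x^{A}, x^{\partial^2 A}\big) &:= \prod_{K \in \cK_A} \tilde{f}_K\big(x^K\big) = \prod_{K \in \cK_A} \bar{f}_K\big(x^K\big),
    \quad
    \tilde{g}\big(x^{\tilde V \setminus A}\big) := \prod_{\substack{K \in \clq{\tilde G}{2} \\ K \cap A = \emptyset}} \tilde{f}_K\big(x^K\big),
\end{align*}
and analogously for $\bar\nu$ with some $\bar{g}\big(x^{\bar V \setminus A}\big)$. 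The factor $F$ is common to both measures by the assumption $\tilde f_K \equiv \bar f_K$ on $\cK_A$, and it depends only on the coordinates in $A \cup \partial^2 A$.

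To compute the conditional law of the $A$-coordinates given the $\partial^2 A$-coordinates, I would integrate out the remaining coordinates against the product reference measure $\gmu^*$. Since $\tilde g$ does not depend on $x^A$, the integral
\begin{equation*}
    \tilde h\big(x^{\partial^2 A}\big) := \int \tilde g\big(x^{\tilde V \setminus A}\big) \, d\gmu^{*,\tilde V \setminus (A \cup \partial^2 A)}\big(x^{\tilde V \setminus (A \cup \partial^2 A)}\big)
\end{equation*}
is a function of $x^{\partial^2 A}$ alone, and the joint density of $(x^A, x^{\partial^2 A})$ under $\tilde\nu$ with respect to $\gmu^{*,A \cup \partial^2 A}$ equals $F(x^A, x^{\partial^2 A}) \cdot \tilde h(x^{\partial^2 A})$. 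The conditional density of $x^A$ given $x^{\partial^2 A}$ is therefore
\begin{equation*}
    \frac{F\big(x^A, x^{\partial^2 A}\big)}{\int F\big(y^A, x^{\partial^2 A}\big) d\gmu^{*,A}(y^A)},
\end{equation*}
wherever the denominator is positive, and the analogous computation for $\bar\nu$ produces the same expression. This yields the desired equality of conditional distributions on the set where the normalising constant is strictly positive, and elsewhere both conditional kernels are defined only $\gmu^*[\partial^2 A]$-negligibly.

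The main subtlety I anticipate is verifying cleanly that $\cK_A$ really is graph-independent and that every $K \in \cK_A$ lies inside $A \cup \partial^2 A$ for \emph{both} graphs simultaneously, because this is the only place where the edge-agreement hypothesis \eqref{asmp:edgesets} and the assumption $\partial^2_{\tilde G} A, \partial^2_{\bar G} A \subset V^*$ are used; the remainder of the argument is just Fubini and cancellation of the coordinate-disjoint factors $\tilde h$ and $\bar h$.
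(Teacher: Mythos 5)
Your argument is correct and is essentially the standard factorisation-and-conditioning proof of this lemma; the paper itself gives no proof here but imports the statement from \cite{lacker2020Locally}*{Lemma 4.3}, whose proof proceeds exactly as you describe (split the density into the common factor $F$ over cliques meeting $A$, which lives on $A \cup \partial^2 A$, and a remainder not depending on $x^A$; integrate out and cancel). One small caveat: your parenthetical claim that \eqref{asmp:edgesets} alone forces a subset of $V^*$ to be a 2-clique of $\tilde{G}$ if and only if it is one of $\bar{G}$ is not quite right, since a pair at distance 2 may be joined only through a common neighbour lying outside $V^*$; but this is harmless because the equality of the two clique collections is already part of hypothesis \eqref{eq:cliques}, so you need not derive it.
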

    Given 2-MRFs on two  overlapping graphs, Lemma \ref{le:specification-abstract} provides conditions under which the conditional distributions of a subset in the intersection (given its complement) coincide for both 2-MRFs. 

    \subsection{Entropy estimates and convergence of truncation}
    \label{subs-tightness}

    For a collection of locally interacting equations of the form Equation \eqref{eq:locally-interaction} and $n\in \bN$, we define the sequence of collections of stochastic differential equations
    \begin{equation}
        \label{eq:truncatedSDE}
        dX_t^{n, u} = b_u^n\Big( t, X^{n, u}[t], \big( X^{n, v}[t] \big)_{v\in N_u} \Big) dt + dZ_t^u,
        \quad 
        u \in V,
        \quad
        (X_0^{n,u})_{u\in V} \sim \mu_0^{V}. 
    \end{equation}
    where for any $u\in V_n$, we define $b_u^n: [0,T] \times \cC_T^d \times (\cC_T^d)^{N_u} \to \bR^d$ to be the progressively measurable function such that 
    \begin{equation}
        \label{driftn}
            b_u^n\Big( t, x^u, (x^v)_{v\in N_u} \Big) = 
            \begin{cases}
                b_u\Big( t, x^u, (x^v)_{v\in N_u} \Big) \quad&\quad \mbox{if $u \in V_{n-2}$}
                \\
                0 \quad&\quad \mbox{if $u\in U_n$. }
            \end{cases}
    \end{equation}
    
	\begin{proposition}
        \label{pro:tightness}
		Let $(V, E)$ be a countably infinite locally finite graph, let $M\in L^1([0,T])$ and let $\big( (b_u, \gamma^u)_{u\in V}, \mu_0 \big)$ satisfy Assumption \ref{assumption:MRF-2}. Then for any $n\geq 4$, we denote $P^n$ to be the law of the solution to Equation \eqref{eq:truncatedSDE} and for every finite set $A \subset V$ and $t\in [0,T]$ there exists a constant $C_t>0$ such that 
		\begin{equation}
            \label{def:entropybound}
			\sup_{n} \bigg( \bH\Big[ P_t^{n,A} \Big| P_t^{*, A} \Big] \vee \bH\Big[ P_t^{*, A} \Big| P_t^{n, A} \Big] \bigg) \leq C_t \cdot |A|. 
		\end{equation} 
	\end{proposition}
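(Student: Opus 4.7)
The estimate \eqref{def:entropybound} asserts that the marginal relative entropy scales linearly in $|A|$, uniformly in the truncation parameter $n$. Since naively bounding by $\bH[P^n | P^*]$ would yield a $|V_n|$-scale, the strategy must exploit the 2-clique factorisation of the Radon-Nikodym derivative coming from Theorem \ref{theorem:ap:girsanov} together with the 2-MRF property of $P^n$ established in Theorem \ref{theorem:MRF-1}.

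First, applying Theorem \ref{theorem:ap:girsanov} to the SDE \eqref{eq:truncatedSDE} on the finite graph $G_n = (V_n, E_n)$ gives the explicit density
\begin{equation*}
\frac{dP^n}{dP^{*, V_n}}(X^{V_n}) = \frac{d\mu_0^{V_n}}{d\mu_0^{*, V_n}}(X_0^{V_n}) \prod_{u \in V_{n-2}} \cZ_T\Big[\int_0^{\cdot} b_u\big(s, X^u[s], X^{N_u}[s]\big)\,ds\Big],
\end{equation*}
where each Girsanov factor $\cZ_T^u$ depends only on coordinates in the 2-clique $\{u\} \cup N_u$, and by Lemma \ref{le:MRFprojections} and Assumption \ref{assumption:MRF-2} the initial density admits a matching 2-clique factorisation. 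Proposition \ref{proposition:Martingale-Ust} applied under $P^n$, together with the linear growth bound \eqref{eq:assumption:MRF-1-linear}, Fernique's Theorem on the Gaussian marginals $\gamma^v$, and the second moment bound \eqref{eq:assumption:MRF-2.int}, yields the uniform per-vertex estimate
\begin{equation*}
\bE^{P^n}\big[\log \cZ_T^u\big] = \tfrac{1}{2} \bE^{P^n}\Big[\Big\|\int_0^\cdot b_u\big(s, X^u[s], X^{N_u}[s]\big)\,ds\Big\|_{\RKHS_T}^2\Big] \leq \kappa_t,
\end{equation*}
with $\kappa_t := C \int_0^t M_r \, dr$ independent of $n$ and $u$.

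Second, I would localise the marginal entropy. Setting $A^+ := A \cup \partial A \cup \partial^2 A$ (of cardinality at most $C_G |A|$ by local finiteness of $G$), data processing gives $\bH[P^{n, A}|P^{*, A}] \leq \bH[P^{n, A^+}|P^{*, A^+}]$. To bound the right-hand side by $O(|A|)$ I would introduce the auxiliary measure $\tilde P^n$ defined by the same SDE \eqref{eq:truncatedSDE} but with all drifts outside $A^+$ set to zero, so that $d\tilde P^n / dP^*$ contains only $|A^+|$ non-trivial Girsanov factors. Applying the specification matching Lemma \ref{le:specification-abstract} together with the 2-MRF property of both $P^n$ and $\tilde P^n$ gives agreement of the conditional laws $P^{n, A}[\cdot | \partial^2 A] = \tilde P^{n, A}[\cdot | \partial^2 A]$. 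Combining this with the chain rule for relative entropy
\begin{equation*}
\bH[\nu^{A^+}|\mu^{A^+}] = \bH[\nu^{\partial^2 A}|\mu^{\partial^2 A}] + \bE^{\nu}\Big[\bH[\nu^{A}(\cdot|\partial^2 A)|\mu^A(\cdot|\partial^2 A)]\Big]
\end{equation*}
reduces the bound to $\bH[\tilde P^{n, A^+}|P^{*, A^+}]$, which is computed directly as
\begin{equation*}
\bH[\tilde P^{n, A^+}|P^{*, A^+}] \leq \sum_{u \in A^+} \bE^{\tilde P^n}[\log \cZ_T^u] + \bH[\mu_0^{A^+}|\mu_0^{*, A^+}] \leq (\kappa_t + C)|A^+|,
\end{equation*}
the initial-data term being bounded by a Hammersley-Clifford decomposition of $\mu_0^{A^+}$ together with Assumption \ref{assumption:MRF-2}.

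The reverse direction $\bH[P^{*, A}|P^{n, A}]$ is treated symmetrically: the reciprocal density $dP^{*, V_n}/dP^n$ is itself a Girsanov exponential (with drift applied in the opposite direction), whose exponent under $P^*$ has the same $\RKHS_T$-norm structure, so the identical per-vertex estimate yields the matching bound.

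The main obstacle is the localisation step: the conditional-law agreement delivered by Lemma \ref{le:specification-abstract} must be promoted to an entropy comparison uniform in $n$, and this requires a careful analysis of how the boundary-marginal entropies $\bH[P^{n, \partial^2 A}|P^{*, \partial^2 A}]$ and $\bH[\tilde P^{n, \partial^2 A}|P^{*, \partial^2 A}]$ compare through the chain rule above. Once the localisation is in place, all remaining estimates are routine consequences of Girsanov, Fernique, and the linear growth assumption on the drift.
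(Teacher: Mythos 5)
You correctly identify the crux --- that data processing against the full entropy $\bH[P^n_t|P^*_t]$ only yields a bound of order $|V_n|$ --- but your mechanism for beating this does not close, and it is not the mechanism the paper uses. The paper's proof is direct: after a Gr\"onwall iteration establishing the uniform a priori bound $\sup_n\sup_{u\in V_n}\bE^{P^n}\big[\|X^u\|_{\cE_t^u}^2\big]<\infty$, it identifies $\bH\big[P^{n,A}_t\big|P^{*,A}_t\big]$ with $\tfrac12\sum_{u\in A\cap V_n}\bE^{P^n}\big[\|\int_0^\cdot b^n_u\,ds\|_{\cH_t}^2\big]$ and bounds each summand by $\int_0^t M_s\,ds$ times that moment bound, which produces the factor $|A|$ at once. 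Your alternative --- enlarging $A$ to $A^+$, introducing $\tilde P^n$ with drifts killed off $A^+$, matching conditional laws via Lemma \ref{le:specification-abstract}, then invoking the chain rule --- stalls exactly where you say it does. The chain rule splits $\bH[\nu^{A^+}|\mu^{A^+}]$ into a boundary-marginal term plus a conditional term whose outer expectation is taken under $\nu$; knowing $P^{n,A}[\,\cdot\,|\partial^2A]=\tilde P^{n,A}[\,\cdot\,|\partial^2A]$ does not let you swap one entropy for the other, because $P^{n,\partial^2A}\neq\tilde P^{n,\partial^2A}$ and the conditional entropies are therefore integrated against different boundary laws. Worse, the boundary term $\bH[P^{n,\partial^2A}_t|P^{*,\partial^2A}_t]$ is itself an instance of \eqref{def:entropybound}, so the reduction is circular. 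Flagging this as ``the main obstacle'' is an honest assessment, but it means the proof is missing its central step.

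Two further points. Your per-vertex bound $\bE^{P^n}[\log\cZ^u_T]\le\kappa_t$ cannot follow from Fernique and \eqref{eq:assumption:MRF-2.int} alone: the expectation is under $P^n$, and \eqref{eq:assumption:MRF-1-linear} controls the exponent by the $\cE^u$-norms of the \emph{solution} and its neighbours, so one needs the uniform-in-$n$ second moment of $X^u$ under $P^n$; this is the Gr\"onwall argument that occupies the bulk of the paper's proof and is absent from yours. Finally, your reference measure carries the product initial law $\mu_0^{*,A^+}$, and you then need $\bH[\mu_0^{A^+}|\mu_0^{*,A^+}]\lesssim|A|$; Assumption \ref{assumption:MRF-2} asserts only equivalence of the finite marginals of $\mu_0$ with the product measure, not any quantitative entropy bound. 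The paper avoids this entirely by taking $P^*$ to have initial law $\mu_0$ itself, so that the Girsanov density contains no initial-condition factor; you should do the same.
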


	\begin{proof}
        Let $\big( (\cC_T^d)^V, \cB'\big( (\cC_T^d)^V\big), P^* \big)$ be the canonical measure space and denote 
        \begin{equation*}
            (X^u)_{u\in V}: (\cC_T^d)^V \to (\cC_T^d)^V
        \end{equation*}
        be the canonical process. Fix $n\in \bN$ such that $n\geq 4$.
        
        Consider the function $Z^V \in (\cC_{0, T}^d)^V$ of the canonical process
        \begin{equation}
            \label{eq:le:tightness-p1}
            Z_t^u = X_t^u - X_0^u - \int_0^t b_u^n\Big( s, X^u[s], X^{N_u}[s] \Big) ds,
            \quad 
            u \in V. 
        \end{equation}
        For any choice of $t\in [0,T]$ and $X^V \in (\cC_T^d)^V$ we denote $\cZ_t^n: [0,T] \times (\cC_T^d)^V \to \bR$ by
        \begin{align}
            t \mapsto \cZ_t^n\big( X^V):=& \prod_{u\in V} 
            \left\{
            \begin{aligned}
                &\cZ_t\Big[ \int_0^{\cdot} b_u^n\big( s, X^u[s], X^{N_u}[s]\big) ds \Big] \quad& u\in A \cap V_{n-2}
                \\
                &1 \quad& \mbox{otherwise.}
            \end{aligned}
            \right.
        \end{align}
        Thanks to Equation \eqref{eq:assumption:MRF-1-finite} we have for every $u\in V$ that
        \begin{equation*}
            \int_0^\cdot b_u\big( s, X^u[s], X^{N_u}[s] \big) ds \in \RKHS_T
        \end{equation*}
        so that by Proposition \ref{proposition:Martingale-Ust} the stochastic process $t\mapsto \cZ_t^n$ is an $\cF_t$-local martingale. Further, by Equation \eqref{eq:assumption:MRF-1-linear} we have for every for any $u \in V_{n-2}$  and $s, t\in [0,T]$ that
        \begin{align*}
            \bigg\| \Pi_{s, t}\Big[ \int_0^\cdot b_u\big( r, X^u[r], X^{N_u}[r] \big) dr \Big] \bigg\|_{\RKHS_T}^2 
            \leq& 
            \int_s^t M_r dr \cdot \bigg( 1 + \big\| X^u\big\|_{\cE^u}^2 + \tfrac{1}{|N_u|} \sum_{v\in N_u} \big\| X^v \big\|_{\cE^v}^2 \bigg)
            \\
            \leq& \int_s^t M_r dr \cdot \bigg( 1 + 2\sup_{v\in V_{n-1}} \big\| X^v\big\|_{\cE^v}^2  \bigg). 
        \end{align*}
        so that
        \begin{align*}
            \bE^{P^*}\Bigg[ \exp&\bigg( \Big\| \Pi_{s, t}\Big[ \int_0^\cdot b_u\big( r, X^u[r], X^{N_u}[r] \big) dr \Big] \Big\|_{\RKHS_T}^2\bigg) \Bigg]
            \\
            \leq& \bE^{P^*}\Bigg[ \exp\bigg( \int_s^t M_r dr \cdot \Big( 1 + 2\sup_{v\in V_{n}} \big\| X^v\big\|_{\cE^v}^2 \Big) \bigg) \Bigg]
            \\
            \leq& \exp\Big( \int_s^t M_r dr \Big) \cdot \bE^{P^*}\bigg[ \sup_{u\in V_{n}} \exp\Big( 2 \int_s^t M_r dr \cdot \big\| X^v\big\|_{\cE^v}^2 \Big) \bigg]
            \\
            \leq& \exp\Big( \int_s^t M_r dr \Big) \cdot \sum_{u\in V_{n}} \bE^{P^*}\bigg[ \exp\Big( 2 \int_s^t M_r dr \cdot \big\| X^v\big\|_{\cE^v}^2 \Big) \bigg]
        \end{align*}
        By choosing $|t-s|$ small enough so that
        \begin{equation}
            2\int_s^t M_r dr \leq \frac{1}{2} \inf_{u\in V_{n}} \frac{1}{\bE^{P^*} \big[ \| X^v \|_{\cE^v}^2 \big]}
        \end{equation}
        we obtain by Fernique's Theorem that
        \begin{equation*}
            \bE^{P^*}\Bigg[ \exp\bigg( \Big\| \Pi_{s, t}\Big[ \int_0^\cdot b_u\big( r, X^u[r], X^{N_u}[r] \big) dr \Big] \Big\|_{\RKHS_T}^2\bigg) \Bigg]< \infty. 
        \end{equation*}
        Therefore, we apply Proposition \ref{proposition:Novikov} to conclude that $\cZ_t^n$ is an $\cF_t^Z$-martingale and Theorem \ref{theorem:ap:girsanov} applies. Hence, we conclude that for every $t\in [0, T]$ the measure $P \in \cP\big( (\cC_t^d)^V \big)$ defined by
        \begin{equation*}
            \frac{dP}{dP^*} \Big|_{\cF_t^Z} \big( X^V \big) = \cZ_t^n\big( X^V[t] \big)
        \end{equation*}
        is a unique weak solution to Equation \eqref{eq:truncatedSDE}. What is more, the $V_n$-marginal $P^{n, V_n}$ is equal to the law of the system of interacting equations
        \begin{equation*}
            dX_t^u = b_u^n\Big( t, X^u[t], X^{N_u^n}[t] \Big) dt + dZ_t^u,
            \quad
            (X_0^u)_{u\in V_n} \sim \mu_0^{V_n},
            \quad
            u\in V_n
        \end{equation*}
        on the finite graph $(V_n, E_n)$. Given a normed subspace $\cE \subseteq \cC_T^d$, we also denote $\cE_t \subseteq \cC_t^d$ to be the canonical projection. Next, for $u \in V_{n-2}$ and $t\in [0,T]$, the expectation
        \begin{align*}
            \bE^{P^n}\Big[ \big\| X^u\big\|_{\cE_t^u}^2 \Big] 
            \leq& 
            3 \bE^{P^n}\bigg[ \big| X_0^u \big|^2 +  \big\| Z^u\|_{\cE^u}^2 + \Big\| \int_0^\cdot b_u^n\big( s, X^u[s], X^{N_u}[s] \big) ds \Big\|_{\RKHS_t}^2 \bigg] 
            \\
            \leq& 3\bigg( \bE^{\mu_0} \Big[ \big| x_0^u \big|^2 \Big] 
            + 
            \bE^{\gamma^u}\Big[ \big\| X^u \big\|_{\cE^u}^2 \Big] 
            + 
            \int_0^t M_s ds 
            \\
            &\quad + 2\int_0^t M_s \cdot \sup_{v \in B_1(u)} \bE^{P^n}\Big[ \big\| X^v \big\|_{\cE_s^v}^2 \Big] ds \bigg)
        \end{align*}
        and for $u \in U_{n}$
        \begin{equation*}
            \bE^{P^n}\Big[ \big\| X^u \big\|_{\cE^u}^2 \Big] \leq 2\bigg( \bE^{\mu_0} \Big[ \big| X_0^u \big|^2 \Big] + \bE^{P^n}\Big[ \big\| Z^u \big\|_{\cE^u}^2 \Big] \bigg). 
        \end{equation*}
        In particular, this means that for any choice of $t\in [0,T]$
        \begin{align*}
            \sup_{u\in V_n} \bE^{P^n}\Big[& \big\| X^u\big\|_{\cE_t^u}^2 \Big]
            \\
            \leq& 
            3\Bigg( \sup_{u\in V_n} \int_{(\bR^d)^{V_n}} \big| X_0^u \big|^2 d\mu_0\big( X_0^{V_n} \big) + \sup_{u\in V_n} \int_{(\cE^u)^{V_n}} \big\| X^u \big\|_{\cE^u}^2 d\gamma^{V_n}\big( X^{V_n} \big) + \int_0^t M_s ds  \Bigg)
            \\
            &\quad + 6 \int_0^t M_s \cdot \sup_{v \in V_n} \bE^{P^n}\Big[ \big\| X^v \big\|_{\cE_s^v}^2 \Big] ds 
        \end{align*}
		and an application of the Gr\"onwall inequality yields 
        \begin{align*}
            \sup_{u\in V_n}& \bE^{P^n}\Big[ \big\| X^u\big\|_{\infty, T}^2 \Big] 
            \\
            \leq& 
            3\Bigg( \sup_{u\in V_n} \int_{(\bR^d)^{V_n}} \big| X_0^u \big|^2 d\mu_0\big( X_0^{V_n} \big) + \sup_{u\in V_n} \int_{(\cE^u)^{V_n}} \big\| X^u \big\|_{\infty, T}^2 d\gamma^{V_n}\big( X^{V_n} \big) + \int_0^T M_s ds  \Bigg)
            \\
            &\cdot\exp\bigg( 6 \int_0^T M_s ds \bigg). 
        \end{align*}
        Finally, we take a supremum in $n\in \bN$ and apply Equation \eqref{eq:assumption:MRF-2.int} in order to conclude that
        \begin{equation*}
            \sup_{u \in V} \bE\Big[ \big\| X^u \big\|_{\cE^u}^2 \Big] < \infty. 
        \end{equation*}

        To conclude, we ecall Equation \eqref{eq:RelativeEntropy} and Proposition \ref{proposition:Martingale-Ust} we conclude that for any finite subset $A\subseteq V_n$ that
        \begin{align*}
            \bH\Big[ P_t^{n, A} \Big| P_t^{*, A} \Big] =& \int_{(\cC_T^d)^{V_n \cap A}} \log\bigg( \prod_{u\in V_n \cap A} \cZ_t\Big[ \int_0^\cdot b_u\big( s, X^u[s], X^{N_u}[s] \big) ds \Big] \bigg) dP\big( X^{V_n\cap A}[t] \big)
            \\
            =& \frac{1}{2} \sum_{u\in A \cap V_n} \bE^{P^n}\bigg[ \Big\| \int_0^\cdot b_u^n\big( s, X^u[s], X^{N_u}[s] \big) ds \Big\|_{\cH_t}^2 \bigg]
            \\
            \leq& \frac{1}{2} \sum_{u\in A \cap V_n} \int_0^t M_s ds \cdot \bigg( 1 + \bE\Big[ \big\| X^u\big\|_{\cE_t^u}^2 \Big] + \tfrac{1}{|N_u|} \sum_{v\in N_u} \bE\Big[ \big\| X^v\big\|_{\cE_t^v}^2 \Big] \bigg)
            \\
            \leq& |A| \cdot \int_0^t M_s ds \cdot \sup_{u \in V_n} \bE\Big[ \big\| X^u \big\|_{\cE_t^u}^2 \Big] < \infty
        \end{align*}
        which implies \eqref{def:entropybound}.         
	\end{proof}
 
	The next lemma  will be used to show both that the \emph{existence} of a weak solution to the infinite SDE system \eqref{eq:theorem:MRF-1} holds automatically and also that it arises as the limit of finite-graph systems. Recall that $P \in \cP(\cC^V)$ denotes the law of the solution of \eqref{eq:theorem:MRF-1}.
	
	\begin{lemma} 
        \label{le:infinitegraphlimit}
		Let $(V, E)$ be a locally finite graph, let $\big( \mu_0, (b_u, \cH^u)_{u\in V} \big)$ satisfy Assumption \ref{assumption:MRF-2}
		Then for any finite set $A \subset V$ and any bounded measurable function $\psi : (\cC_T^d)^{A} \to \bR$, we have 
		\begin{align*}
			\lim_{n\to\infty} \bE^{P^n}\Big[ \psi\big( X^{A} \big) \Big] = \bE\Big[ \psi\big( X^{A} \big) \Big].
		\end{align*}
        In particular, $P^n \to P$ weakly on $(\cC_T^d)^V$. 
	\end{lemma}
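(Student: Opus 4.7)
The plan is to first establish weak convergence $P^n \to P$ on $(\cC_T^d)^V$ equipped with the product topology, and then upgrade the convergence of expectations from bounded continuous to bounded measurable integrands using uniform integrability of the associated Radon--Nikodym densities.

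For tightness of $\{P^n\}_{n\ge 4}$ in $\cP\big((\cC_T^d)^V\big)$, the standard criterion on countable products reduces the task to showing that $\{P^{n,\{u\}}\}_n$ is tight in $\cP(\cC_T^d)$ for each $u \in V$. The proof of Proposition \ref{pro:tightness} already establishes the uniform moment bound $\sup_{n\ge 4}\sup_{u\in V}\bE^{P^n}\big[\|X^u\|_{\cE^u}^2\big]<\infty$ via Gr\"onwall and the linear growth hypothesis of Assumption \ref{assumption:ExUn}. Since the abstract Wiener embedding $\cE^u \hookrightarrow \cC_T^d$ from Theorem \ref{theorem:ap:girsanov} is compact, bounded sets in $\cE^u$ are relatively compact in $\cC_T^d$, delivering the required marginal tightness.

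For the limit identification, let $Q$ be any weak subsequential limit and fix a finite $A\subset V$. The entropy bound $\sup_n \bH\big[P^{n,A}\,\big|\,P^{*,A}\big] \leq C_T|A|$ from Proposition \ref{pro:tightness}, together with the de la Vall\'ee--Poussin criterion, gives uniform integrability of the densities $L_n^A := dP^{n,A}/dP^{*,A}$ in $L^1(P^{*,A})$. Along a subsequence $(n_k)$ realising $Q$, the weak convergence $P^{n_k, A} \to Q^A$ combined with uniform integrability yields weak $L^1$ convergence $L_{n_k}^A \to L_\infty^A$, which Scheff\'e's lemma upgrades to $L^1(P^{*,A})$ convergence. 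To see $L_\infty^A = dP^A/dP^{*,A}$, I exploit the structural fact from Equation \eqref{driftn}: for each $u\in V$ and each $n$ large enough that $\{u\}\cup N_u \subset V_{n-2}$, the truncated drift agrees with the true one, $b_v^n=b_v$ for every $v \in \{u\}\cup N_u$. The Girsanov density for $P^n$ restricted to the $\sigma$-algebra generated by $X^{\{u\}\cup N_u}$ then factorises into a term matching the corresponding factor of $dP/dP^*$ plus conditional expectations over coordinates outside $\{u\}\cup N_u$, whose limits are controlled by the same uniform integrability. Taking $A$ to range over a nested exhaustion and invoking Kolmogorov's extension (Theorem \ref{theorem:KMET}) identifies $Q = P$, so the entire sequence converges.

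Finally, the $L^1(P^{*,A})$ convergence of densities translates to total variation convergence $\|P^{n,A} - P^A\|_{\mathrm{TV}} \to 0$, so $\bE^{P^n}[\psi(X^A)] \to \bE^{P}[\psi(X^A)]$ for every bounded measurable $\psi:(\cC_T^d)^A\to\bR$. The principal obstacle is the limit identification, since on the full product space the Radon--Nikodym densities $dP^n/dP^*$ are ``infinite product'' objects that need not converge pointwise. The proposed argument sidesteps this by working only with finite-dimensional marginals, where the entropy bound from Proposition \ref{pro:tightness} supplies the uniform integrability and the drift consistency \eqref{driftn} pins down the limiting density factor-by-factor.
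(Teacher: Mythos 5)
There is a genuine gap at the point where you upgrade the convergence. Uniform integrability of the densities $L_n^A = dP^{n,A}/dP^{*,A}$, obtained from the entropy bound via de la Vall\'ee--Poussin, gives by Dunford--Pettis only relative compactness in the weak topology $\sigma\big(L^1(P^{*,A}), L^\infty(P^{*,A})\big)$. Scheff\'e's lemma cannot promote this to norm convergence: Scheff\'e requires \emph{almost everywhere} convergence of the densities together with convergence of their integrals, and you have established neither. Weak $L^1$ convergence does not imply strong $L^1$ convergence (consider $f_n = 1+\sin(n\cdot)$ on $[0,2\pi]$, which converges weakly but not in norm to $1$), so the claimed total variation convergence of $P^{n,A}$ to $P^A$ --- which is precisely what you invoke to handle bounded \emph{measurable} $\psi$ --- is unsupported. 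A second, related problem is the identification $L_\infty^A = dP^A/dP^{*,A}$: the marginal density $dP^{n,A}/dP^{*,A}$ is the conditional expectation under $P^{*}$ of the full Girsanov density given $X^A$, not a product of the clique factors supported in $A$. The factors involving coordinates outside $A$ enter only through conditional expectations whose convergence is essentially the content of the lemma itself; saying their limits are ``controlled by the same uniform integrability'' is circular rather than an argument.

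The paper takes a different and more robust route that you should compare against. Tightness is read off directly from the entropy bound, since relative-entropy sublevel sets $\{\mu : \bH[\mu|\nu] \leq c\}$ are weakly compact (your moment-bound-plus-compact-embedding alternative is also workable for this step). The limit is then identified at the level of the SDE rather than of densities: for $n$ large enough that $v \in V_{n-2}$, the recentred processes $X^v - X_0^v - \int_0^\cdot b_v\big(r, X^v[r], X^{N_v}[r]\big)\,dr$ are, under $P^n$, independent Gaussian processes with law $\gamma^v$; this property passes to any subsequential weak limit $Q$, so $Q$ is a weak solution of Equation \eqref{eq:theorem:MRF-1}, and uniqueness in law (Theorem \ref{theorem:ExUn}) forces $Q = P$. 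This sidesteps entirely the need to control the marginal Radon--Nikodym derivatives, which is where your argument breaks down. If you do want the stronger conclusion for bounded measurable test functions via total variation of the marginals, you would need an additional ingredient (e.g.\ almost sure convergence of the conditional densities, or lower semicontinuity of relative entropy combined with a Pinsker-type argument), none of which is supplied by uniform integrability alone.
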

 
	\begin{proof}
		Firstly, we recall that for any $\nu \in \cP(\cX)$ and any $c>0$ the set
        \begin{equation*}
            \big\{ \mu \in \cP(\cX): \bH\big[ \mu\big| \nu \big]< c \big\}
        \end{equation*}
        is compact in the weak-$*$ topology (see \cite{DemboZeitouni2010}*{Lemma 6.2.16}) so that the entropy bound from Equation \eqref{def:entropybound} implies that for any $t\in [0,T]$ the sequence of measures $(P_t^{n,A})_{n \in \bN}$ is tight. Further, since this holds for every finite set $A \subseteq V$ and every $t \in [0,T]$ we deduce that the entire sequence $(P^n)_{n \in \bN}$ is tight in $(\cC_T^d)^V$.
		
        Note also that for sufficiently large $n$ it holds that under $P^n$ the processes
		\begin{equation}
            \label{pf:infinitegraphlimit1}
            X^v_s - X_0^v - \int_0^s b_v\big( r, X^v[r], X^{N_v}[r] \big) dr , \quad s \geq 0, \quad v \in V_{n-2}, 
		\end{equation}
		are independent Gaussian processes with distribution $\gamma^u$, due to the consistency condition for the $(b^n_u)_{u\in V}$ and the identity that $N_u^{G_n} = N_u^{G}$ for any $v \in V_{n-2}$.
		
		Now let $Q \in \cP(\cC^V)$ and suppose that there exists a subsequence $n_k$ such that $P^{n_k} \rightarrow Q$ weakly. In particular, this implies that for any finite set $A \subset V$ and any bounded measurable function $\psi: (\cC_T^d)^{A} \to \bR$,
		\begin{equation*}
            \lim_{k \to \infty} \bE^{P^{n_k}} \Big[ \psi \big( X^{A} \big) \Big] = \bE^{Q} \Big[ \psi \big( X^{A} \big) \Big],
		\end{equation*}
		
		We conclude that, under $Q$, the processes in \eqref{pf:infinitegraphlimit1} are independent Gaussian distributed processes with distribution $\gamma^u$ and hence $Q=P$.
	\end{proof} 

\end{document}